\PassOptionsToPackage{frozencache}{minted}


\IfFileExists{./prepreamble-amspreprint.sty}{\RequirePackage[packages,theorems,changes]{prepreamble-amspreprint}}{}

\makeatletter
\IfFileExists{./scoop-latex/scoop-packages.sty}{\providecommand*{\input@path}{}\edef\input@path{{./scoop-latex/}\input@path}}{}
\@namedef{ver@minted.sty}{}
\@namedef{opt@minted.sty}{}
\makeatother
\PassOptionsToPackage{style = authoryear-comp, backend = biber}{biblatex}
\PassOptionsToPackage{commentmarkup = footnote}{changes}
\PassOptionsToPackage{ngerman, english}{babel}
\PassOptionsToPackage{autostyle = true}{csquotes}
\RequirePackage{algorithm}
\RequirePackage{algpseudocode}
\RequirePackage{pgfplotstable}    

\documentclass[]{amsart}

\RequirePackage{biblatex}
\RequirePackage{ltxcmds}
\IfFileExists{./preamble-amspreprint.sty}{\RequirePackage[packages,theorems,changes]{preamble-amspreprint}}{}

\usepackage{scoop-local}

\IfFileExists{./postpreamble-amspreprint.sty}{\RequirePackage[packages,theorems,changes]{postpreamble-amspreprint}}{}

\makeatletter
\@ifpackageloaded{changes}{
\definechangesauthor[name = {Bastian Dittrich}, color = {red!80!black}]{BD}
\definechangesauthor[name = {Evelyn Herberg}, color = {purple!80!black}]{EH}
\definechangesauthor[name = {Roland Herzog}, color = {orange!80!black}]{RH}
\definechangesauthor[name = {Georg Müller}, color = {teal}]{GM}
}{}
\makeatother        

\addbibresource{scoop-local.bib}
\addbibresource{scoop-bibliography/scoop.bib}

\makeatletter
\@ifpackageloaded{hyperref}{%
	\hypersetup{
		pdftitle = {A DC-Reformulation for Gradient-\texorpdfstring{$L^0$}{L0}-Constrained Problems},
		pdfauthor = {Bastian Dittrich, Evelyn Herberg, Roland Herzog, Georg Müller},
		pdfkeywords = {\texorpdfstring{$L^0$}{L0} constraints, gradient sparsity, DC programming},
		pdfcreator = {Created using the Scoop Template Engine version 1.6.0.}
	}
}{
	\pdfinfo{
		/Title (A DC-Reformulation for Gradient-\texorpdfstring{$L^0$}{L0}-Constrained Problems)
		/Author (Bastian Dittrich, Evelyn Herberg, Roland Herzog, Georg Müller)
		/Subject ()
		/Keywords (\texorpdfstring{$L^0$}{L0} constraints, gradient sparsity, DC programming)
		/Creator (Created using the Scoop Template Engine version 1.6.0.)
	}
}
\makeatother

\title[DC-Reformulation for Gradient-\texorpdfstring{$L^0$}{L0}-Constrained Problems]{A DC-Reformulation for Gradient-\texorpdfstring{$L^0$}{L0}-Constrained Problems}

\author[B. Dittrich]{Bastian Dittrich}
\address[B. Dittrich]{Würzburg University, Emil-Fischer-Straße 30, 97074 Würzburg, Germany}
\email{\detokenize{bastian.dittrich@uni-wuerzburg.de}}
\urladdr{https://www.mathematik.uni-wuerzburg.de/optimalcontrol/team/dittrich-bastian/}

\author[E. Herberg]{Evelyn Herberg\orcidlink{0000-0003-2515-4818}}
\address[E. Herberg]{Interdisciplinary Center for Scientific Computing, Heidelberg University, 69120 Heidelberg, Germany}
\email{\detokenize{evelyn.herberg@iwr.uni-heidelberg.de}}
\urladdr{https://scoop.iwr.uni-heidelberg.de}

\author[R. Herzog]{Roland Herzog\orcidlink{0000-0003-2164-6575}}
\address[R. Herzog]{Interdisciplinary Center for Scientific Computing, Heidelberg University, 69120 Heidelberg, Germany and Institute for Mathematics, Heidelberg University, 69120 Heidelberg, Germany}
\email{\detokenize{roland.herzog@iwr.uni-heidelberg.de}}
\urladdr{https://scoop.iwr.uni-heidelberg.de}

\author[G. Müller]{Georg Müller\orcidlink{0000-0003-2515-4818}}
\address[G. Müller]{Interdisciplinary Center for Scientific Computing, Heidelberg University, 69120 Heidelberg, Germany}
\email{\detokenize{georg.mueller@iwr.uni-heidelberg.de}}
\urladdr{https://scoop.iwr.uni-heidelberg.de}

\thanks{The first author was partially supported by the German Research Foundation DFG under project grant Wa~3626/3--2.}

\date{\today}

\dedicatory{}

\begin{document}

\begin{abstract}
Cardinality constraints in optimization are commonly of $L^0$-type, and they lead to sparsely supported optimizers.
An efficient way of dealing with these constraints algorithmically, when the objective functional is convex, is reformulating the constraint using the difference of suitable $L^1$- and largest-$K$-norms and subsequently solving a sequence of penalized subproblems in the difference-of-convex (DC) class.
We extend this DC-reformulation approach to problems with $L^0$-type cardinality constraints on the support of the gradients, \ie, problems where sparsity of the gradient and thus piecewise constant solutions are the target.

\end{abstract}

\keywords{\texorpdfstring{$L^0$}{L0} constraints, gradient sparsity, DC programming}

\makeatletter
\ltx@ifpackageloaded{hyperref}{%
\subjclass[2010]{\href{https://mathscinet.ams.org/msc/msc2020.html?t=49K27}{49K27}, \href{https://mathscinet.ams.org/msc/msc2020.html?t=49M20}{49M20}, \href{https://mathscinet.ams.org/msc/msc2020.html?t=90C26}{90C26}, \href{https://mathscinet.ams.org/msc/msc2020.html?t=90C46}{90C46}}
}{%
\subjclass[2010]{49K27, 49M20, 90C26, 90C46}
}
\makeatother

\maketitle

\section{Introduction}
\label{section:introduction}

A wide range of applications benefit from sparsity of the solutions to their characterizing optimization problems, \ie, when the solutions are localized in a suitable sense.
Standard example applications include sensor and control device placement problems \cite{Stadler:2009:1,ChepuriLeus:2015:1}, optimal experimental design \cite{AlexanderianPetraStadlerGhattas:2014:1,HaberMagnantLuceroTenorio:2012:1}, imaging problems \cite{GehreKluthLipponenJinSeppaenenKaipioMaass:2012:1,ZhongQin:2016:1}, optimization in finance \cite{WuSunGeAllenZhaoZeng:2024:1,BertsimasCoryWright:2022:1,BurkhardtUlrych:2023:1}, signal processing \cite{MarvastiAminiHaddadiSoltanolkotabiKhalajAldroubiSaneiChambers:2012:1} and machine learning \cite{DeleuBengio:2021:1,KuruogluKuoChan:2023:1}.
Common approaches for promoting sparsity in the solutions to optimization or optimal control problems proceed by adding corresponding constraints.
This can be done either explicitly by adding an upper bound on a suitable (semi"~)""norm, or implicitly by adding a penalty term to the objective function.

In \cite{GotohTakedaTono:2017:1}, the authors considered a cardinality-constrained problem in finite dimensions of the type
\begin{align*}
	\text{Minimize}
	\quad
	&
	f(x)
	\quad
	\text{where }
	x \in \R^n
	\\
	\text{\st}
	\quad
	&
	\norm{x}_0 \le K
	,
\end{align*}
where $\norm{\,\cdot\,}_0$ denotes the $\ell_0$-pseudo-norm that counts the non-zero elements of~$x$.
Their solution approach was based on solving a family of penalized problems, whose objective is the difference of convex (DC) functions, using a DC-specific algorithm; see, \eg, \cite{LeThiPham:2018:1}.
This approach has been extended in \cite{DittrichWachsmuth:2025:1} to infinite-dimensional problems of the type
\begin{equation}
	\begin{aligned}
		\label{eq:general-problem:function-value-constraints}
		\text{Minimize}
		\quad
		&
		f(u)
		\quad
		\text{where }
		u \in U
		\\
		\text{\st}
		\quad
		&
		\norm{u}_0 \le K
	\end{aligned}
\end{equation}
in a reflexive Banach space~$U$ with compact embedding $U \compactly L^1(\Omega)$.
The $L^0$-pseudo-norm of a function $u \in U$ is defined as the measure of its support, \ie, $\norm{u}_0 = \mu(\supp(u))$, with respect to a suitable measure~$\mu$.

It is a key observation both in the finite- and infinite-dimensional settings that we may replace the $L^0$-pseudo-norm constraint leveraging the equivalence
\begin{equation}
	\label{eq:L0-norm-constraint}
	\norm{u}_0
	\le
	K
	\quad
	\Leftrightarrow
	\quad
	\norm{u}_1 - \largestKnorm{u}{K}
	=
	0
	.
\end{equation}
Here $\norm{\,\cdot\,}_1$ denotes the $L^1$-norm, and $\largestKnorm{\cdot}{K}$ the largest-$K$-norm.
We obtain a problem equivalent to \eqref{eq:general-problem:function-value-constraints} of the form
\begin{equation}
	\label{eq:general-problem:function-value-constraints:largest-K-norm}
	\begin{aligned}
		\text{Minimize}
		\quad
		&
		f(u)
		\quad
		\text{where }
		u \in U
		\\
		\text{\st}
		\quad
		&
		\norm{u}_1 - \largestKnorm{u}{K}
		=
		0
		,
	\end{aligned}
\end{equation}
for which the family of penalized, unconstrained problems with parameter $\rho > 0$
\begin{equation}
	\label{eq:general-problem:function-value-constraints:largest-K-norm:penalized}
	\text{Minimize}
	\quad
	f(u)
	+
	\rho \, \paren[big](){\norm{u}_1 - \largestKnorm{u}{K}}
	\quad
	\text{where }
	u \in U
	,
\end{equation}
yields approximations of the true solution, in the sense that every weak accumulation point of a sequence of global solutions to the penalized problems corresponding to $\rho \to \infty$ is a global solution to \eqref{eq:general-problem:function-value-constraints}, see \cite[Theorem~5.4]{DittrichWachsmuth:2025:1}.
Due to their structure featuring the difference of convex functions, the penalized problems \eqref{eq:general-problem:function-value-constraints:largest-K-norm:penalized} can be treated using the DC~algorithm.

\subsection*{Contributions}

In this paper, we present a modification of problem \eqref{eq:general-problem:function-value-constraints:largest-K-norm} and its penalized DC formulation \eqref{eq:general-problem:function-value-constraints:largest-K-norm:penalized} that is tailored to optimization problems with cardinality constraints on the (discretized) \emph{gradient} of the optimization function.
That is, we aim to achieve sparsity of the gradient rather than sparsity of the function itself.
Specifically, we are motivated by problems of the form
\begin{equation}
	\label{eq:general-problem:gradient-constraints}
	\begin{aligned}
		\text{Minimize}
		\quad
		&
		f(u)
		\quad
		\text{where }
		u \in U
		\\
		\text{\st}
		\quad
		&
		\norm{\nabla u}_0 \le K
	\end{aligned}
\end{equation}
in a suitable function space~$U$ so that $u \in U$ has a weak gradient $\nabla u$.
The definition of the $L^0$-pseudo-norm of $\nabla u$, measuring the size of its support, will be given in the beginning of \cref{section:DC-reformulation:function-space}.

An upper bound on the measure of the gradient's support translates to sparsely supported gradients and thus to solutions that are constant on subsets.
This behavior is desirable, \eg, in imaging and signal reconstruction \cite{ArvanitopoulosDarginis:2017:1}.
Related literature can be found on elliptic equations with gradient constraints \cite{Evans:1979:1,GriesseKunisch:2009:1} and pointwise state gradient constraints in optimal control \cite{CasasFernandez:1993:1,DengMehlitzPruefert:2019:1,SchielaWollner:2011:1}.
We also mention \cite{ClasonKunisch:2014:1} for optimal control problems promoting piecewise constant controls, but with preferred values of the control.

Both the analytical properties and possible choices for solution algorithms for these types of problems differ significantly from those of the original problem formulation in \cite{DittrichWachsmuth:2025:1}, where sparsity of the function~$u$ itself was targeted.
On the analytical side, we consider the family of penalized problems with penalty parameter $\rho > 0$
\begin{equation*}
	\text{Minimize}
	\quad
	f(u)
	+
	\rho \, \paren[big](){\norm{\nabla u}_1 - \largestKnorm{\nabla u}{K}}
	\quad
	\text{where }
	u \in U
	.
\end{equation*}
On the numerical side, simply replacing function values with gradients would lead to singular matrices in a second-order subroutine, e.g. semismooth Newton, for solving the DC subproblems.
Therefore, a reformulation of the discrete subproblems is necessary compared to \cite{DittrichWachsmuth:2025:1}.
This reformulation renders the associated penalization approach inexact, yet still effective.

The described procedure additionally suggests a novel way to solve problems stemming from Pott's models in imaging applications where the $L^0$-regularizer is treated as an explicit side constraint, \cf \cite{StorathWeinmannFrikelUnser:2015:1} and the references therein.
Comparisons with state of the art techniques for this application are beyond the scope of this paper.

\subsection*{Structure}

\Cref{section:notation-preliminaries} introduces the required notation and preliminaries.
In \cref{section:DC-reformulation:function-space}, we extend analytical results of DC reformulations for cardinality-constrained problems in function space to reformulations of gradient-sparsity-type.
We establish existence of solution for the gradient sparsity problem and the viability of the reformulation approach.
\Cref{section:DC-reformulation:discrete} discusses the details of the discretization process and the analysis of the corresponding discrete problems including nodal Bouligand-stationarity conditions for the discretized system.
Finally \cref{section:numerical-results} contains numerical experiments.
\makeatletter
\ltx@ifclassloaded{siamart250106}{%
}{%
	\Cref{section:notation} collects the notation used throughout the paper in a helpful table.
}
\makeatother

\section{Notation and Preliminaries}
\label{section:notation-preliminaries}

The support and the $\ell_0$-pseudo-norm of a vector $x \in \R^n$ are defined as
\begin{equation*}
	\supp(x)
	\coloneqq
	\setDef[big]{i \in \set{1, \ldots, n}}{x_i \neq 0}
	,
	\qquad
	\norm{x}_0 \coloneqq \mu(\supp(x))
	.
\end{equation*}
Here $\mu \colon \cP(\set{1, \ldots, n}) \to \R$ denotes the counting measure, so that we have $\norm{x}_0 = \cardinality{\supp(x)}$.
Furthermore, we define a weighted version with weight vector $\nu \in \R_{> 0}^n$ as follows:
\begin{equation*}
	\norm{x}_{0,\nu}
	\coloneqq
	\mu_\nu(\supp(x))
	\coloneqq
	\sum_{i=1}^n \nu_i \, \norm{x_i}_0
	=
	\sum_{\substack{i=1 \\ x_i \neq 0}}^n \nu_i
	.
\end{equation*}
More generally, for measurable functions $u \colon \Omega \to \R$ on a measure space $(\Omega, \cA, \mu)$, the (set-theoretic) support and corresponding $L^0$-pseudo-norm are defined as
\begin{equation*}
	\supp(u)
	\coloneqq
	\setDef{x \in \Omega}{u(x) \neq 0}
	,
	\qquad
	\norm{u}_0 \coloneqq \mu(\supp(u))
	.
\end{equation*}

The reformulation of $L^0$-constrained problems into DC problems makes heavy use of the largest-$K$-norm.
When $x \in \R^n$, $\nu \in \R_{> 0}^n$ is a weight vector and $K \in \interval[][]{0}{\norm{\nu}_1}$, then the corresponding weighted largest-$K$-norm is defined as
\begin{equation*}
	\largestKnorm{x}{K}[\nu]
	\coloneqq
	\max \setDef[Big]{\sum_{i \in I} \nu_i \, \abs{x_i}}{I \subseteq \set{1, \ldots, n} \text{ \st\ } \sum_{i \in I} \nu_i \le K}
	.
\end{equation*}
This is a generalization of the definition of the usual (unweighted) largest-$K$-norm, which is equal to the sum of the absolute values of the $K$ largest entries of $x$ in absolute value.
The unweighted is recovered from the weighted case for $\nu = (1, \ldots, 1)^\transp$ and $K \in \N_0$.
When $K = \norm{\nu}_1$, then $\largestKnorm{x}{K}[\nu]$ becomes the weighted $\ell_1$-norm, \ie,
\begin{equation*}
	\norm{x}_{1,\nu}
	\coloneqq
	\sum_{i=1}^n \nu_i \, \abs{x_i}
	.
\end{equation*}
More generally, consider an integrable function $u \colon \Omega \to \R$ on an underlying measure space $(\Omega, \cA, \mu)$ and $K \in \interval[][]{0}{\mu(\Omega)}$.
Its largest-$K$-norm is defined as
\begin{equation*}
	\largestKnorm{u}{K}
	\coloneqq
	\sup \setDef[Big]{\int_A \abs{u} \d \mu}{A \in \cA \text{ \st\ } \mu(A) \le K}
	.
\end{equation*}
This generalization was extensively studied in \cite{DittrichWachsmuth:2025:1}.
Throughout the paper, we adopt the convention that $(+\infty) - (+\infty) = +\infty$, which is reasonable in the context of DC functions.

For any Banach space $X$, we let $X^*$ denote its topological dual space.
Additionally, for a linear, bounded mapping $g \colon X \to Y$ between Banach spaces, we let $g^*\colon Y^* \to X^*$ denote the dual mapping.
When $g \colon X \to \overline{\R} \coloneqq \R \cup \set{+\infty}$ is convex, we consider the convex subdifferential at $x \in X$ to be
\begin{equation*}
	\partial g(x)
	=
	\setDef{s \in X^*}{g(x) + \dual{s}{\hat x - x}_{X^*,X} \le g(\hat x) \text{ for all } \hat x \in X}
	.
\end{equation*}

\section{DC Reformulation for Gradient Sparsity in Function Space}
\label{section:DC-reformulation:function-space}

In this section we consider the abstract problem
\begin{equation}
	\label{eq:general-problem}
	\begin{aligned}
		\text{Minimize}
		\quad
		&
		f(u)
		\quad
		\text{where }
		u \in U
		\\
		\text{\st}
		\quad
		&
		\norm{\nabla u}_0 \le K
		,
	\end{aligned}
\end{equation}
with $K \in \interval[][]{0}{\mu(\Omega)}$ under the following standing assumptions.

\begin{assumption}[Standing assumptions for the continuous setting] \skipline
	\label{assumption:standing-assumptions}
	\begin{enumerate}
		\item
		\label{assumption:standing-assumptions:Omega}
			$\Omega \subseteq \R^d$ is open and bounded, $d \in \N$, and $\mu$ is the Lebesgue measure.

		\item
		\label{assumption:standing-assumptions:U}
			$U$ is a reflexive Banach space that is compactly embedded in $H^1(\Omega)$ and is dense in $L^1(\Omega)$.

		\item
			\label{assumption:standing-assumptions:f}
			The objective $f \colon U \to \overline{\R} \coloneqq \R \cup \set{+\infty}$ is proper, convex, lower semicontinuous, bounded from below and radially unbounded.

		\item
			\label{assumption:standing-assumptions:extraproper}
			There exists $u \in U$ with $f(u) < \infty$ and $\norm{\nabla u}_0 \le K$.
	\end{enumerate}
\end{assumption}

\begin{example}
	\label{example:standing-assumptions}
	Suppose that $\Omega \subseteq \R^d$, $d \ge 2$ is a bounded domain with Lipschitz boundary.
	For any $\varepsilon > 0$, an example of a space $U$ that satisfies \cref{assumption:standing-assumptions:U} of \cref{assumption:standing-assumptions} is $U \coloneqq H^1_0(\Omega) \cap H^{1+\varepsilon}(\Omega)$ equipped with the $H^{1+\varepsilon}(\Omega)$-norm.
\end{example}

Let us discuss the precise meaning of $\norm{\nabla u}_0$.
Here and throughout, $\frac{\partial u}{\partial x_i}$ denotes the $i$-th weak partial derivative of~$u$.
By $\nabla u \coloneqq \paren[big](){\frac{\partial u}{\partial x_1}, \ldots, \frac{\partial u}{\partial x_d}}^\transp$, we denote the Euclidian weak gradient of $u \in U$.
We define
\begin{equation}
	\label{eq:support-of-gradient}
	\supp(\nabla u)
	\coloneqq
	\bigcup_{i=1}^d \supp \paren[Big](){\frac{\partial u}{\partial x_i}}
	=
	\supp \paren[Big](){\sum_{i=1}^d \paren[big](){\textstyle\frac{\partial u}{\partial x_i}}^2}
\end{equation}
and consequently,
\begin{math}
	\norm{\nabla u}_0
	\coloneqq
	\mu(\supp(\nabla u))
\end{math}
satisfies
\begin{equation*}
	\norm{\nabla u}_0
	=
	\norm{(\nabla u)^\transp \nabla u}_0
	=
	\norm[Big]{\sum_{i=1}^d \paren[big](){\textstyle\frac{\partial u}{\partial x_i}}^2}_0
	=
	\norm{\sumOfSquaredGradComponents{u}}_0
	.
\end{equation*}
Notice that this definition is invariant \wrt rotation of the coordinate system.
We also have
\begin{equation*}
	\largestKnorm{\sumOfSquaredGradComponents{u}}{K}
	=
	\sup \setDef[Big]{\int_A \sumOfSquaredGradComponents{u} \d \mu}{A \text{ is measurable and } \mu(A) \le K}
	.
\end{equation*}

Since $U$ is embedded into $H^1(\Omega)$, we know that $\sumOfSquaredGradComponents{u} \in L^1(\Omega)$ holds for all $u \in U$.
By \cite[Theorem~3.22]{DittrichWachsmuth:2025:1}, problem \eqref{eq:general-problem} is therefore equivalent to the problem
\begin{equation}
	\label{eq:general-problem:largest-K-norm}
	\begin{aligned}
		\text{Minimize}
		\quad
		&
		f(u)
		\quad
		\text{where }
		u \in U
		\\
		\text{\st}
		\quad
		&
		\norm{\sumOfSquaredGradComponents{u}}_1 - \largestKnorm{\sumOfSquaredGradComponents{u}}{K}
		=
		0
		.
	\end{aligned}
\end{equation}
We associate with \eqref{eq:general-problem:largest-K-norm} a family of unconstrained, penalized problems
\begin{equation}
	\label{eq:general-problem:penalized}
	\begin{aligned}
		\text{Minimize}
		\quad
		&
		f_\rho(u)
		\coloneqq
		f(u) + \rho \, \phi(u)
		\quad
		\text{where }
		u \in U
		\\
		\text{with}
		\quad
		&
		\phi(u)
		\coloneqq
		\norm{\sumOfSquaredGradComponents{u}}_1 - \largestKnorm{\sumOfSquaredGradComponents{u}}{K}
	\end{aligned}
\end{equation}
and penalty parameter $\rho > 0$.
The family \eqref{eq:general-problem:penalized} can be used to approximate the solutions of \eqref{eq:general-problem}, as shown by the following results.

\begin{lemma}
	\label{lemma:weak-continuity-of-phi}
	Both $\norm{\sumOfSquaredGradComponents{(\cdot)}}_1$ and $\norm{\sumOfSquaredGradComponents{(\cdot)}}_K$ are weakly sequentially continuous on~$U$, and therefore so is $\phi$.
\end{lemma}
\begin{proof}
	Suppose $\sequence{u}{k} \weakly u^*$ in~$U$.
	By the compact embedding $U \compactly H^1(\Omega)$, this convergence is even strong in $H^1(\Omega)$ and in particular we have $\frac{\partial \sequence{u}{k}}{\partial x_i} \to \frac{\partial u^*}{\partial x_i}$ in $L^2(\Omega)$ for all $i = 1, \ldots, d$.
	Therefore, $\sumOfSquaredGradComponents{\sequence{u}{k}} \to \sumOfSquaredGradComponents{u^*}$ in $L^1(\Omega)$, and the claim follows because $\largestKnorm{\cdot}{K} \le \norm{\,\cdot\,}_1$.
\end{proof}

\begin{theorem}
	\label{theorem:general-problem:penalized:existence}
	Problems \eqref{eq:general-problem}, \eqref{eq:general-problem:largest-K-norm} and \eqref{eq:general-problem:penalized} possess global minimizers.
\end{theorem}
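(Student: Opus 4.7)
The plan is to apply the direct method of the calculus of variations separately to each of the three problems, relying on the compact embedding $U \compactly H^1(\Omega)$ (through \cref{lemma:weak-continuity-of-phi}) to control the non-convex part $\phi$ in the limit.

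First I would treat the penalized problem \eqref{eq:general-problem:penalized}. Observe that $\phi(u) \ge 0$ for all $u \in U$ because $\largestKnorm{\cdot}{K} \le \norm{\cdot}_1$. Together with \cref{assumption:standing-assumptions:f}, this makes $f_\rho = f + \rho \, \phi$ proper (nonempty domain by \cref{assumption:standing-assumptions:extraproper}), bounded from below, and coercive on $U$. Any minimizing sequence $\sequence{u}{k}$ is therefore bounded in $U$; reflexivity of $U$ yields a weakly convergent subsequence $u_k \weakly u^*$. The functional $f$ is weakly sequentially lower semicontinuous on $U$ as a proper, convex, lower semicontinuous functional on a reflexive Banach space, and $\phi$ is weakly sequentially continuous by \cref{lemma:weak-continuity-of-phi}. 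Hence $f_\rho$ is weakly sequentially lower semicontinuous and $u^*$ is a global minimizer.

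Next I would handle problem \eqref{eq:general-problem:largest-K-norm}. The feasible set $\mathcal F \coloneqq \setDef{u \in U}{\phi(u) = 0}$ is nonempty by \cref{assumption:standing-assumptions:extraproper} (combined with the equivalence \eqref{eq:L0-norm-constraint} applied to $\sumOfSquaredGradComponents{u}$) and, crucially, weakly sequentially closed because $\phi$ is weakly sequentially continuous. Take a minimizing sequence in $\mathcal F$; coercivity and reflexivity again deliver a weak subsequential limit $u^* \in U$, weak closedness of $\mathcal F$ gives feasibility $\phi(u^*) = 0$, and weak lower semicontinuity of $f$ finishes the argument.

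Existence for \eqref{eq:general-problem} is then immediate: the equivalence of \eqref{eq:general-problem} and \eqref{eq:general-problem:largest-K-norm} established via \cite[Theorem~3.22]{DittrichWachsmuth:2025:1} (applied to $\sumOfSquaredGradComponents{u} \in L^1(\Omega)$) transfers a minimizer of \eqref{eq:general-problem:largest-K-norm} to a minimizer of \eqref{eq:general-problem} with the same objective value.

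The only potentially delicate step is passing to the limit in the constraint $\phi(u) = 0$ for \eqref{eq:general-problem:largest-K-norm}, since $-\largestKnorm{\cdot}{K}$ is concave and would only yield an inequality under weak convergence if taken in isolation; this obstacle is precisely what \cref{lemma:weak-continuity-of-phi} resolves by upgrading weak convergence in $U$ to strong convergence of $\sumOfSquaredGradComponents{\cdot}$ in $L^1(\Omega)$ via the compact embedding $U \compactly H^1(\Omega)$.
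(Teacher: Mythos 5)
Your proof is correct and follows essentially the same route as the paper: the paper's proof simply invokes \cref{lemma:weak-continuity-of-phi} together with the analogous arguments of \cite[Theorems~5.2 and~5.3]{DittrichWachsmuth:2025:1}, which are precisely the direct-method steps you spell out (coercivity and reflexivity for boundedness and weak subsequential limits, weak sequential lower semicontinuity of the proper convex lsc functional $f$, weak sequential continuity of $\phi$ to close the feasible set and to pass to the limit in the penalty term, and the equivalence of \eqref{eq:general-problem} and \eqref{eq:general-problem:largest-K-norm} via \cite[Theorem~3.22]{DittrichWachsmuth:2025:1}). Your self-contained write-up is a faithful expansion of the argument the paper outsources.
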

\begin{proof}
	With \cref{lemma:weak-continuity-of-phi}, the result follows analogously as in \cite[Theorem~5.2 and Theorem~5.3]{DittrichWachsmuth:2025:1}.
\end{proof}

\begin{theorem}[Convergence of global minimizers] \skipline
	\label{theorem:general-problem:penalized:convergence}
	Suppose that $\sequence{\rho}{k} > 0$ is a sequence such that $\lim_{k \to \infty} \sequence{\rho}{k} = \infty$.
	Let $\sequence{u}{k}$ be a global solution of the penalized problem \eqref{eq:general-problem:penalized} with $\rho = \sequence{\rho}{k}$ for all $k \in \N$.
	Then the sequence $(\sequence{u}{k})$ is bounded in~$U$, satisfies $\lim_{k \to \infty} \phi(\sequence{u}{k}) = 0$, and every weak accumulation point $\overline{u}$ is a global solution of \eqref{eq:general-problem}.
\end{theorem}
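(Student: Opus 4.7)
My plan is a standard penalty-method convergence argument, whose two analytical ingredients are the weak sequential continuity of~$\phi$ from \cref{lemma:weak-continuity-of-phi}, and the elementary non-negativity $\phi \ge 0$, which follows because $\largestKnorm{\cdot}{K} \le \norm{\,\cdot\,}_1$ in the sense used in the definition of~$\phi$.

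I would first pick the feasible point $\tilde u$ guaranteed by \cref{assumption:standing-assumptions:extraproper} of \cref{assumption:standing-assumptions}. Since $\norm{\nabla \tilde u}_0 \le K$, the equivalence between \eqref{eq:general-problem} and \eqref{eq:general-problem:largest-K-norm} (established via \cite[Theorem~3.22]{DittrichWachsmuth:2025:1} applied to $\sumOfSquaredGradComponents{\tilde u}$) yields $\phi(\tilde u) = 0$ and therefore $f_{\sequence{\rho}{k}}(\tilde u) = f(\tilde u)$ for every~$k$. Comparing with the minimizer $\sequence{u}{k}$ and using $\phi \ge 0$ gives
\begin{equation*}
	f(\sequence{u}{k})
	\le
	f(\sequence{u}{k}) + \sequence{\rho}{k} \, \phi(\sequence{u}{k})
	\le
	f(\tilde u),
\end{equation*}
so radial unboundedness of~$f$ immediately implies that $(\sequence{u}{k})$ is bounded in~$U$. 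Letting $m \in \R$ be a lower bound of~$f$, dividing the same inequality by $\sequence{\rho}{k}$ produces
\begin{equation*}
	0
	\le
	\phi(\sequence{u}{k})
	\le
	\frac{f(\tilde u) - m}{\sequence{\rho}{k}}
	\xrightarrow{k \to \infty} 0.
\end{equation*}

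For the optimality claim, let $\overline u$ be a weak accumulation point of $(\sequence{u}{k})$ along a subsequence $(\sequence{u}{k_j})$, which exists by reflexivity of~$U$ and boundedness. \Cref{lemma:weak-continuity-of-phi} gives $\phi(\overline u) = \lim_{j \to \infty} \phi(\sequence{u}{k_j}) = 0$, so the equivalence used above delivers $\norm{\nabla \overline u}_0 \le K$, i.e., feasibility of~$\overline u$ for \eqref{eq:general-problem}. To check optimality, I would fix any global minimizer $u^*$ of \eqref{eq:general-problem} (existing by \cref{theorem:general-problem:penalized:existence}). Then $\phi(u^*) = 0$ and hence $f_{\sequence{\rho}{k_j}}(u^*) = f(u^*)$, so minimality of~$\sequence{u}{k_j}$ yields $f(\sequence{u}{k_j}) \le f(u^*)$. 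Weak lower semicontinuity of~$f$, which holds since $f$ is convex and lower semicontinuous on the reflexive space~$U$, then gives
\begin{equation*}
	f(\overline u)
	\le
	\liminf_{j \to \infty} f(\sequence{u}{k_j})
	\le
	f(u^*),
\end{equation*}
and combined with feasibility of~$\overline u$ this shows that $\overline u$ is a global minimizer of \eqref{eq:general-problem}.

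I do not anticipate a substantive obstacle: the non-standard feature of the gradient-sparsity setting (namely that $\phi$ is formulated in terms of $\sumOfSquaredGradComponents{u}$ rather than~$u$) enters only through \cref{lemma:weak-continuity-of-phi} and through the equivalence to \eqref{eq:general-problem:largest-K-norm}, both already available. The remainder is essentially the same outer-penalty argument as in \cite[Theorem~5.3]{DittrichWachsmuth:2025:1}.
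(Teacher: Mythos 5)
Your argument is correct and is exactly the standard outer-penalty reasoning that the paper invokes by deferring to \cite[Theorem~5.4]{DittrichWachsmuth:2025:1} together with \cref{lemma:weak-continuity-of-phi}; all the ingredients you use (the feasible point from \cref{assumption:standing-assumptions}, $\phi \ge 0$, weak sequential continuity of $\phi$, and weak lower semicontinuity of the convex lsc objective $f$) are the same ones that proof relies on. No substantive difference from the paper's route.
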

\begin{proof}
	Together with \cref{lemma:weak-continuity-of-phi}, the result follows analogously as in \cite[Theorem~5.4]{DittrichWachsmuth:2025:1}.
\end{proof}

Since critical points are the natural limit points of the iterates in a DC~algorithm, we are interested in their characterization.
\begin{definition}[Critical point, strongly critical point] \skipline
	\label{definition:critical-point}
	Suppose that $g, h \colon U \to \overline{\R}$ are proper, lower semicontinuous and convex and consider the DC function $\delta \coloneqq g - h$.
	Then $u \in U$ is called a critical point of~$\delta$ \wrt the decomposition into $g$ and $h$ if
	\makeatletter
	\ltx@ifclassloaded{siamart250106}{%
		\begin{math}
			\partial g(u) \cap \partial h(u)
			\neq
			\emptyset
			,
		\end{math}
	}{%
		\begin{equation*}
			\partial g(u) \cap \partial h(u)
			\neq
			\emptyset
			,
		\end{equation*}
	}
	and a strongly critical point if
	\ltx@ifclassloaded{siamart250106}{%
		\begin{math}
			\emptyset
			\neq
			\partial h(u)
			\subseteq
			\partial g(u)
			.
		\end{math}
	}{%
		\begin{equation*}
			\emptyset
			\neq
			\partial h(u)
			\subseteq
			\partial g(u)
			.
		\end{equation*}
	}
	\makeatother
\end{definition}

Note that the conditions in \cref{definition:critical-point} for (strong) criticality are necessary local optimality conditions for a DC function $\delta = g-h$, see \cite[Theorem~2]{PhamLeThi:1997:1}.
Under sufficient regularity assumptions, (strong) criticality of $u$ corresponds to the well known necessary optimality condition $0 \in \partial (g - h)(u)$.
We will go into more detail in the discretized setting in \cref{subsection:DC-reformulation:DC-approach}.
For now, applying the chain rule for the computation of convex subdifferentials, we can characterize the subdifferential of $\largestKnorm{\sumOfSquaredGradComponents{u}}{K}$.
To simplify notation, we introduce the function
\begin{equation*}
	W
	\colon
	U
	\ni
	u
	\mapsto
	W(u)
	\coloneqq
	\sumOfSquaredGradComponents{u}
	=
	\sum_{i=1}^d \paren[Big](){\frac{\partial u}{\partial x_i}}^2
	\in
	L^1(\Omega)
\end{equation*}
that has the continuous Fréchet derivative
\begin{equation*}
	W'(u) \, v
	=
	2 \sum_{i=1}^d \frac{\partial u}{\partial x_i} \frac{\partial v}{\partial x_i}
	\quad
	\text{ for }
	v \in U
	.
\end{equation*}

\begin{theorem}[Characterization of the subdifferential $\partial \largestKnorm{W(\cdot)}{K}$] \skipline
	\label{theorem:subdifferential-of-the-largest-K-norm-of-the-sum-of-squared-gradient-components}
	Let $u \in U$ and $K \in \interval[][]{0}{\mu(\Omega)}$.
	Then $\largestKnorm{W(\cdot)}{K} \colon U \to \R$ is convex and
	\begin{subequations}
		\begin{equation}
			\partial \largestKnorm{W(\cdot)}{K}(u)
			=
			\setDef[auto]{W'(u)^* r}{r \in \partial \largestKnorm{\cdot}{K}(W(u))}
		\end{equation}
		holds with
		\begin{multline}
			\label{eq:subdifferential-of-the-largest-K-norm}
			\partial \largestKnorm{\cdot}{K}\paren[auto](){W(u)}
			\\
			=
			\setDef[auto]{r \in L^\infty(\Omega)}{\norm{r}_\infty \le 1, \; \norm{r}_1 \le K, \; \int_\Omega r \, W(u) \d \mu = \largestKnorm{W(u)}{K}}
			.
		\end{multline}
	\end{subequations}
	This is to be understood in the sense of the embedding of $L^\infty(\Omega)$ into $U^*$ induced by the embedding of $U$ into $L^1(\Omega)$; see \cref{assumption:standing-assumptions}.
	Moverover,
	\begin{equation*}
		(W'(u)^* r)\, v
		=
		2 \sum_{i=1}^d \int_\Omega r \, \frac{\partial u}{\partial x_i} \frac{\partial v}{\partial x_i} \d \mu
		\quad
		\text{ for }
		v \in U
		.
	\end{equation*}
\end{theorem}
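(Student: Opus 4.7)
The statement has three parts: the adjoint formula for $W'(u)^*$, the characterization \eqref{eq:subdifferential-of-the-largest-K-norm} of $\partial \largestKnorm{\cdot}{K}(W(u))$ in $L^\infty(\Omega)$, and the chain-rule identity between the two subdifferentials. The adjoint formula is a direct computation from the Fréchet derivative $W'(u) v = 2 \sum_{i=1}^d (\partial u/\partial x_i)(\partial v/\partial x_i)$ together with the $L^\infty(\Omega) \hookrightarrow U^*$ embedding singled out in \cref{assumption:standing-assumptions}. Formula \eqref{eq:subdifferential-of-the-largest-K-norm} is available from \cite{DittrichWachsmuth:2025:1}: it follows from the dual representation $\largestKnorm{w}{K} = \max \setDef{\int r \, w \d \mu}{r \in L^\infty(\Omega),\ \norm{r}_\infty \le 1,\ \norm{r}_1 \le K}$ and the usual convex-duality identification of the subdifferential at $w$ with the set of maximizers, together with the fact that the constraint set is the unit ball of the dual norm.

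The heart of the proof is the chain-rule identity. I would first verify that $u \mapsto \largestKnorm{W(u)}{K}$ is convex on $U$: the map $W$ is pointwise convex (a sum of squared linear forms), its image lies in the non-negative cone of $L^1(\Omega)$, and $\largestKnorm{\cdot}{K}$ is convex and monotone non-decreasing on that cone, so convexity transfers to the composition. Exploiting that $W$ is \emph{quadratic} yields the exact expansion $W(u + t h) = W(u) + t\, W'(u) h + t^2\, W(h)$ in $L^1(\Omega)$, which combined with the $1$-Lipschitz continuity of $\largestKnorm{\cdot}{K}$ (with respect to the $L^1$-norm) gives $\largestKnorm{W(u+th)}{K} = \largestKnorm{W(u) + t\, W'(u) h}{K} + O(t^2)$. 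Dividing by $t$, letting $t \downarrow 0$, and invoking the standard directional-derivative formula for a convex continuous function on $L^1(\Omega)$, the directional derivative of the composition reads
\[
    \lim_{t \downarrow 0} \frac{\largestKnorm{W(u+th)}{K} - \largestKnorm{W(u)}{K}}{t} = \max_{r \in \partial \largestKnorm{\cdot}{K}(W(u))} \langle W'(u)^* r, h \rangle_{U^*, U},
\]
which is precisely the support function, evaluated at $h$, of the set $S(u) \coloneqq \setDef{W'(u)^* r}{r \in \partial \largestKnorm{\cdot}{K}(W(u))}$.

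The equality $\partial \largestKnorm{W(\cdot)}{K}(u) = S(u)$ then follows from the usual correspondence between a convex continuous function's subdifferential and the support function of its one-sided directional derivative, provided $S(u)$ is convex and weakly-$*$ closed in $U^*$. Since $\partial \largestKnorm{\cdot}{K}(W(u))$ is weakly-$*$ closed and contained in the $L^\infty(\Omega)$-unit ball, it is weakly-$*$ compact and convex by Banach-Alaoglu, and since $r \mapsto W'(u)^* r$ is linear and weakly-$*$ to weakly-$*$ continuous from $L^\infty(\Omega)$ into $U^*$, the image $S(u)$ inherits convexity and weak-$*$ compactness. I expect the one genuinely delicate step to be the interchange of $\lim_{t \downarrow 0}$ with the $\max$ over subgradients in the directional-derivative computation; the exact (rather than merely asymptotic) second-order expansion for $W$, together with the Lipschitz continuity of $\largestKnorm{\cdot}{K}$, should make this bookkeeping routine and avoids the constraint qualifications required for a generic chain rule between a convex function and a nonlinear Fréchet-differentiable inner map.
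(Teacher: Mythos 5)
Your proposal is correct, and it reaches the same conclusion by a noticeably more self-contained route than the paper. The paper's proof is essentially two citations plus the adjoint computation: it quotes the $L^\infty$-characterization of $\partial \largestKnorm{\cdot}{K}(W(u))$ from \cite[Theorem~3.15]{DittrichWachsmuth:2025:1} (as you do) and then invokes a ready-made chain rule for convex subdifferentials \cite[Theorem~2.5]{Clason2020} to pass from $\partial \largestKnorm{\cdot}{K}(W(u))$ to $\partial\paren[big](){\largestKnorm{\cdot}{K} \circ W}(u)$. You instead prove the chain-rule identity directly: convexity of the composition via monotonicity of $\largestKnorm{\cdot}{K}$ on the nonnegative cone, the exact second-order expansion $W(u+th) = W(u) + t\,W'(u)h + t^2 W(h)$ available because $W$ is quadratic, the $1$-Lipschitz estimate $\abs{\largestKnorm{a}{K} - \largestKnorm{b}{K}} \le \norm{a-b}_1$ to absorb the $O(t^2)$ term, and the identification of the directional derivative of the composition with the support function of $\setDef{W'(u)^* r}{r \in \partial\largestKnorm{\cdot}{K}(W(u))}$, which is weak-$*$ compact and convex as the image of a weak-$*$ compact convex set under the weak-$*$-continuous map $W'(u)^*$. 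This buys you something concrete: most textbook chain rules for convex subdifferentials are stated for a \emph{linear} inner map, and the nonlinear versions require an ordered-convexity hypothesis plus a qualification condition, so your argument makes explicit exactly why the formula holds here without having to check that the cited theorem's hypotheses cover the quadratic $W$. The only (minor) overcaution is your worry about interchanging the limit with the max: no interchange is needed, since the standard formula for the directional derivative of a continuous convex function already delivers the max over the subdifferential, and the $O(t^2)$ correction vanishes in the difference quotient. The remaining ingredients --- the adjoint formula and the $L^\infty$-characterization --- are handled identically in both proofs.
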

\begin{proof}
	The largest-$K$-norm is convex, continuous and Lipschitz-continuous everywhere and its convex subdifferential has the claimed form \eqref{eq:subdifferential-of-the-largest-K-norm}; see, \eg, \cite[Theorem~3.15]{DittrichWachsmuth:2025:1}.

	Notice that due to the definition $W(u) = \sum_{i=1}^d \paren[big](){\frac{\partial u}{\partial x_i}}^2$, we have that $W \colon U \to L^1(\Omega)$ is convex in a pointwise \ale-sense.
	That is, for $t \in \interval[][]{0}{1}$ and $u,v \in U$, we have $W(t \, u + (1 - t) \, v) \le t \, W(u) + (1 - t) \, W(v)$.
	Accordingly, because of nonnegativity of~$W$, we find
	\begin{align*}
		\MoveEqLeft
		\largestKnorm{W(t \, u + (1 - t) \, v)}{K}
		\\
		&
		=
		\sup \setDef[Big]{\int_A W(t \, u + (1 - t) \, v) \d \mu}{A \in \cA \text{ \st\ } \mu(A) \le K}
		\\
		&
		\le
		\sup \setDef[Big]{\int_A t \, W(u) \d \mu + \int_A (1 - t) \, W(v) \d \mu}{A \in \cA \text{ \st\ } \mu(A) \le K}
		\\
		&
		\le
		t \, \sup \setDef[Big]{\int_A W(u) \d \mu}{A \in \cA \text{ \st\ } \mu(A) \le K}
		\\
		&
		\quad
		+
		(1 - t) \, \sup \setDef[Big]{\int_A W(v) \d \mu}{A \in \cA \text{ \st\ } \mu(A) \le K}
		\\
		&
		=
		t \, \largestKnorm{W(u)}{K}
		+
		(1 - t) \, \largestKnorm{W(v))}{K}
		,
	\end{align*}
	showing the convexity of $\largestKnorm{W(\cdot)}{K} \colon U \to \R$.

	Since $\largestKnorm{W(\cdot)}{K}$ is weakly sequentially continuous on~$U$ and convex, see \cref{lemma:weak-continuity-of-phi}, by application of the chain rule for convex subdifferentials, see, \eg \cite[Theorem~13.23]{ClasonValkonen:2025:1}, the elements of $\partial \largestKnorm{W(\cdot)}{K}(u) = \partial(\largestKnorm{\cdot}{K} \circ W)(u)$ are of the form $W'(u)^* r$ with $r \in \partial \largestKnorm{\cdot}{K}(W(u))$ and
	\begin{equation*}
		(W'(u)^* r) \, v
		=
		\dual{W'(u)^* r}{v}_{U^*,U}
		=
		\dual{r}{W'(u) \, v}_{L^\infty(\Omega),L^1(\Omega)}
		=
		2 \sum_{i=1}^d \int_\Omega r \, \frac{\partial u}{\partial x_i} \frac{\partial v}{\partial x_i} \d \mu
	\end{equation*}
	holds for all $v \in U$.
\end{proof}

\Cref{theorem:subdifferential-of-the-largest-K-norm-of-the-sum-of-squared-gradient-components} allows us to state the following necessary optimality condition.
\begin{corollary}
	Let $f$ be Gâteaux-differentiable and let $u$ be a critical point of $f_\rho$ \wrt the natural DC decomposition in the DC problem \eqref{eq:general-problem:penalized}.
	Then there exists $r \in L^\infty(\Omega)$ with $\norm{r}_\infty \le 1$, $\norm{r}_1 \le K$ and $\int_\Omega r \, W(u) \d \mu = \largestKnorm{W(u)}{K}$ such that
	\begin{equation*}
		f'(u) \, v
		+
		2 \, \rho \sum_{i=1}^d \int_\Omega (1 - r) \frac{\partial u}{\partial x_i} \frac{\partial v}{\partial x_i} \d \mu
		=
		0
		\quad
		\text{ for all }
		v \in U
		.
	\end{equation*}
	If $u$ is even strongly critical, then the subdifferential $\partial \largestKnorm{W(\cdot)}{K}(u)$ is a singleton and the unique element $s \in \partial \largestKnorm{W(\cdot)}{K}(u)$ satisfies
	\begin{equation*}
		\rho \, \dual{s}{v}_{U^*,U}
		=
		f'(u) \, v
		+
		2 \, \rho \sum_{i=1}^d \int_\Omega \frac{\partial u}{\partial x_i} \frac{\partial v}{\partial x_i} \d \mu
		\quad
		\text{ for all }
		v \in U
		.
	\end{equation*}
\end{corollary}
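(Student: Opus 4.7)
The plan is to spell out the natural DC decomposition $f_\rho = g - h$ of the objective in \eqref{eq:general-problem:penalized}, with
\[
g(u) \coloneqq f(u) + \rho \, \norm{W(u)}_1 \qquad \text{and} \qquad h(u) \coloneqq \rho \, \largestKnorm{W(u)}{K},
\]
and then apply the definitions of (strong) criticality directly. Both summands are proper, convex, and lower semicontinuous on $U$. Since $\norm{W(u)}_1 = \sum_{i=1}^d \int_\Omega \paren[big](){\partial u/\partial x_i}^2 \d\mu$ and $U$ embeds continuously into $H^1(\Omega)$, the map $u \mapsto \norm{W(u)}_1$ is Fréchet-differentiable on $U$ with derivative $v \mapsto 2 \sum_i \int_\Omega \frac{\partial u}{\partial x_i}\frac{\partial v}{\partial x_i} \d\mu$. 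Together with the assumed Gâteaux-differentiability of $f$, this shows that $g$ is Gâteaux-differentiable, so $\partial g(u) = \set{g'(u)}$ collapses to a single element.

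Next, I would invoke \cref{theorem:subdifferential-of-the-largest-K-norm-of-the-sum-of-squared-gradient-components} to identify $\partial h(u) = \rho \, \partial \largestKnorm{W(\cdot)}{K}(u)$ as the set of functionals $\rho \, W'(u)^* r$ in which $r \in L^\infty(\Omega)$ ranges over the admissible densities satisfying $\norm{r}_\infty \le 1$, $\norm{r}_1 \le K$, and $\int_\Omega r \, W(u) \d\mu = \largestKnorm{W(u)}{K}$. This is the only place where the substantive work previously established in the paper enters.

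For the first assertion, criticality means $\partial g(u) \cap \partial h(u) \neq \emptyset$; because $\partial g(u)$ is a singleton, this forces the equality $g'(u) = \rho \, W'(u)^* r$ for some admissible $r$. Evaluating both sides on arbitrary $v \in U$ and moving the full gradient contribution of $g'(u)$ onto the same side produces the coefficient $(1 - r)$ and the claimed identity. For the second assertion, strong criticality additionally demands $\emptyset \neq \partial h(u) \subseteq \partial g(u)$; since $\partial g(u)$ has exactly one element, $\partial h(u)$ must also be a singleton, whose unique element $\rho s$ must equal $g'(u)$, giving the second equation directly.

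The main obstacle is essentially bookkeeping rather than analysis; the one substantive step is verifying that $u \mapsto \norm{W(u)}_1$ is Gâteaux-differentiable on $U$ with the expected derivative, which is routine via the identity $\norm{W(u+tv)}_1 - \norm{W(u)}_1 = 2t \sum_i \int_\Omega \frac{\partial u}{\partial x_i}\frac{\partial v}{\partial x_i} \d\mu + t^2 \norm{W(v)}_1$ and the continuity of $\nabla \colon U \to L^2(\Omega)^d$ inherited from the embedding $U \hookrightarrow H^1(\Omega)$.
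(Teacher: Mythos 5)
Your proposal is correct and follows essentially the same route as the paper: the same natural decomposition $g = f + \rho\,\norm{W(\cdot)}_1$, $h = \rho\,\largestKnorm{W(\cdot)}{K}$, the subdifferential characterization from \cref{theorem:subdifferential-of-the-largest-K-norm-of-the-sum-of-squared-gradient-components}, and the observation that $\partial g(u)$ is a singleton so that strong criticality collapses $\partial h(u)$ to the same single element. The only difference is that you spell out the differentiability of $u \mapsto \norm{W(u)}_1$ explicitly, which the paper leaves implicit.
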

\begin{proof}
	First off, note that the function $u \mapsto$ $\norm{W(u)}_1 = \textstyle \sum_{i=1}^d \int_{\Omega} \paren[big](){\frac{\partial u}{\partial x_i}}^2 \d \mu$ is Fréchet-differentiable.
	Therefore, the first identity is a straightforward combination of \cref{definition:critical-point} with $\delta = f_\rho$, $g = f + \rho \, \norm{W(u)}_1$ and $h = \rho \, \largestKnorm{W(u)}{K}$, and the subdifferential characterization in \cref{theorem:subdifferential-of-the-largest-K-norm-of-the-sum-of-squared-gradient-components}.

	In case of strong criticality, we know that $\partial g(u)$ is a singleton, hence $\partial g(u)$ has to coincide with any of its nonempty subsets, which yields the second equality.
\end{proof}

\section{DC Reformulation for Gradient Sparsity in a Discretized Setting}
\label{section:DC-reformulation:discrete}

In this section we develop and analyze a discretization for problem \eqref{eq:general-problem} and its variants.
While the discussion in the continuous setting in \cref{section:DC-reformulation:function-space} was following \cite{DittrichWachsmuth:2025:1} quite closely, the discrete settings are notably different.

\subsection{Discretization, Constraint Reformulation, Existence of Solutions}
\label{subsection:discretization-constraint-reformulation-existence}

For simplicity, we assume in this section that the bounded domain $\Omega \subseteq \R^d$ is polyhedral.
We consider a geometrically conforming discretization of~$\Omega$ into a finite number of $d$-simplices~$S_i$; see \cite[Definition~1.55]{ErnGuermond:2004:1}.
We refer to the collection of simplices of dimensions~$0$ (vertices), $1$ (edges) up to $d$ as the mesh~$\cS$.
In other words, the mesh is a pure, geometric simplicial $d$-complex in~$\R^d$.
We let $\nvertices$, $\nedges$ and $\nsimplices$ denote the number of vertices, edges and $d$-simplices (cells) in~$\cS$.
In the coming analysis, we additionally require some notation concerning the relationships of vertices, edges and simplices, including the following index maps that encode incidence (connectivity) information:
\begin{equation*}
	\begin{aligned}
		\edgetovertices{j}
		&
		\coloneqq
		\setDef[big]{\ell \in \set{1, \ldots, \nvertices}}{\text{vertex $\ell$ is incident to edge $j$}}
		,
		\\
		\simplextovertices{i}
		&
		\coloneqq
		\setDef[big]{\ell \in \set{1, \ldots, \nvertices}}{\text{vertex $\ell$ is a vertex of simplex $i$}}
		,
		\\
		\vertextoedges{\ell}
		&
		\coloneqq
		\setDef[big]{j \in \set{1, \ldots, \nedges}}{\text{edge $j$ is incident to vertex $\ell$}}
		,
		\\
		\simplextoedges{i}
		&
		\coloneqq
		\setDef[big]{j \in \set{1, \ldots, \nedges}}{\text{edge $j$ is an edge of simplex $i$}}
		,
		\\
		\vertextosimplices{\ell}
		&
		\coloneqq
		\setDef[big]{i \in \set{1, \ldots, \nsimplices}}{\text{simplex $i$ is incident to vertex $\ell$}}
		,
		\\
		\edgetosimplices{j}
		&
		\coloneqq
		\setDef[big]{i \in \set{1, \ldots, \nsimplices}}{\text{simplex $i$ is incident to edge $j$}}
		.
	\end{aligned}
\end{equation*}
When either of the maps above is applied to a set of arguments, the output is to be understood as the union of the images of the individual arguments.

We consider the finite element space of piecewise affine and continuous functions $\Omega \to \R$, \ie,
\begin{equation}
	\label{eq:finite-element-space}
	U_h
	\coloneqq
	\Span \setDef[big]{\varphi_\ell}{\ell \in \set{1, \ldots, \nvertices}}
\end{equation}
generated by the nodal basis functions $\varphi_\ell$, $\ell = 1, \ldots, \nvertices$.
Elements $u_h \in U_h$ will be represented by their nodal coefficient vectors, which we denote by $\tu \in \R^\nvertices$.
Due to the geometrically conforming discretization, we have $U_h \subseteq H^s(\Omega)$ for all $s \in \interval[][]{0}{1}$.

We begin by constructing an expression that indicates whether or not a function $u_h \in U_h$ is constant on a cell~$S_i$.
To this end, we define the matrix $\tD_1 \in \R^{\nedges \times \nvertices}$ by
\begin{equation*}
	(\tD_1)_{j,\ell}
	\coloneqq
	\begin{cases}
		\mrep[r]{1}{-1}
		,
		&
		\text{if vertex $\ell$ is the first vertex of edge $j$}
		,
		\\
		-1
		,
		&
		\text{if vertex $\ell$ is the second vertex of edge $j$}
		,
		\\
		\mrep[r]{0}{-1}
		,
		&
		\text{else}
		.
	\end{cases}
\end{equation*}
Consequently, given $\tu \in \R^\nvertices$, the quantity $(\tD_1 \tu)_j$, $j = 1, \ldots, \nedges$, represents the difference of vertex values incident on the $j$-th edge.
The sign, \ie, whether a vertex is considered first or second, is not essential here, since we will take the square of this quantity in \eqref{eq:sum-of-squared-differences}.

Furthermore, a matrix $\tD_2 \in \R^{\nsimplices \times \nedges}$ that encodes whether an edge is part of a cell is defined by
\begin{equation*}
	(\tD_2)_{i,j}
	\coloneqq
	\begin{cases}
		1
		,
		&
		\text{if $j \in \simplextoedges{i}$, \ie, edge~$j$ is an edge of the cell $S_i$}
		\\
		0
		,
		&
		\text{else}
		.
	\end{cases}
\end{equation*}
We can now define
\begin{equation}
	\label{eq:sum-of-squared-differences}
	\sumOfSquaredDifferences{\tu}
	\coloneqq
	\tD_2 \paren[big][]{(\tD_1 \tu) \odot (\tD_1 \tu)}
	\in
	\R^\nsimplices
	,
\end{equation}
where $\odot$ means componentwise multiplication.
Hence, $\sumOfSquaredDifferences{\tu}_i$ is the sum of the squared pairwise differences of the vertex values on the cell~$S_i$ and will be used in a representation of the discrete $\ell_0$ gradient constraints below.
Note that $\sumOfSquaredDifferences{\tu}$ depends on $\tu$ smoothly.

A function $u_h \in U_h$ is constant on~$S_i$ if and only if $\sumOfSquaredDifferences{\tu}_i = 0$ holds for its coefficient vector~$\tu$.
We collect the indices of cells where $\tu$ is constant or non-constant in the sets
\begin{align*}
	\constantsimplices{\tu}
	&
	\coloneqq
	\setDef[big]{i \in \set{1, \ldots, \nsimplices}}{\sumOfSquaredDifferences{\tu}_i = 0}
	,
	\\
	\nonconstantsimplices{\tu}
	&
	\coloneqq
	\setDef[big]{i \in \set{1, \ldots, \nsimplices}}{\sumOfSquaredDifferences{\tu}_i \neq 0}
	.
\end{align*}

\begin{remark}
	\label{remark:not-squaring-the-differences-of-vertex-values}
	Following \cite[Section~6.1]{DittrichWachsmuth:2025:1}, one could also consider $\tD_2 \abs{\tD_1 \tu} \in \R^\nsimplices$ (with the componentwise absolute value) in place of \eqref{eq:sum-of-squared-differences}.
	For that choice, however, the rank deficiency of $\tD_1$ for standard finite element meshes leads to a singular matrix in the semismooth Newton method one would like to apply when solving the resulting DC subproblems, \cf \cite[Section~6.4]{DittrichWachsmuth:2025:1}.
	Note that the matrix $\tD_1$ would have full rank if we would only consider the rows pertaining to edges that form a minimum spanning tree (\cite[Section~1.5]{Diestel:2025:1}).
	Unfortunately, when $\tD_1$ is generated from such a minimum spanning tree, then, in general, $\paren[big](){\tD_2 \abs{\tD_1 \tu}}_i = 0$ is no longer indicative of $u_h$ being constant on~$S_i$.

	An additional benefit of using the squared differences \eqref{eq:sum-of-squared-differences} is that the resulting DC subproblems \eqref{eq:general-problem:discrete:DC-subproblem} are smooth, see \cref{lemma:norm-of-sum-of-squared-differences-via-matrices}.
	For instance, when $f$ is a quadratic polynomial, we simply need to solve a linear system instead of employing a semismooth Newton method; see \eqref{eq:DC-subproblem:optimality-conditions}.

	In retrospect, we also tried using a squared formulation in the setting of \cite[Section~6.1]{DittrichWachsmuth:2025:1}.
	This would also render the resulting DC subproblems smooth.
	However, numerical comparisons show that this can lead to significantly slower convergence or convergence to local minimizers in that setting.
\end{remark}

Given a weight vector $\nu \in \R^\nsimplices_{>0}$ and $\tu \in \R^\nvertices$, for $\sumOfSquaredDifferences{\tu}$, the weighted (pseudo"~)""norms defined in \cref{section:notation-preliminaries} have the form
\begin{align*}
	\norm{\sumOfSquaredDifferences{\tu}}_{0,\nu}
	&
	=
	\sum_{\mrep{i \in \nonconstantsimplices{\tu}}{}} \nu_i
	,
	\\
	\norm{\sumOfSquaredDifferences{\tu}}_{1,\nu}
	&
	=
	\sum_{i=1}^\nsimplices \nu_i \, \abs{\sumOfSquaredDifferences{\tu}_i}
	=
	\sum_{\mrep{i \in \nonconstantsimplices{\tu}}{}} \nu_i \, \sumOfSquaredDifferences{\tu}_i
	,
	\\
	\largestKnorm{\sumOfSquaredDifferences{\tu}}{K}[\nu]
	&
	=
	\max \setDef[Big]{\sum_{i \in I \cap \nonconstantsimplices{\tu}} \nu_i \, \sumOfSquaredDifferences{\tu}_i }{I \subseteq \set{1, \ldots, \nsimplices} \text{ \st\ } \sum_{i \in I} \nu_i \le K}
	.
\end{align*}

In the following, we always use the weight vector $\nu$ given by the cell volumes
\begin{equation}
	\label{eq:cell-volumes-are-weights}
	\nu_i
	\coloneqq
	\mu(S_i)
	\text{ for }
	i = 1, \ldots, \nsimplices
	,
	\text{ so that }
	\norm{\nu}_1 = \mu(\Omega)
	\text{ holds}
	.
\end{equation}
This allows us to equivalently express the cardinality constraint on $\nabla u_h$ as follows:
\begin{lemma}
	\label{lemma:L0-norm-of-gradient}
	For $u_h \in U_h$, we have $\norm{\nabla u_h}_0 = \norm{\sumOfSquaredDifferences{\tu}}_{0,\nu}$.
\end{lemma}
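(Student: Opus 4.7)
The plan is to exploit the piecewise affine structure of $u_h \in U_h$ to reduce the measure of the gradient's support to a sum of cell volumes, and then recognize that sum as $\norm{\sumOfSquaredDifferences{\tu}}_{0,\nu}$.

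First I would observe that on the interior of each $d$-simplex $S_i$, the function $u_h$ is affine, so $\nabla u_h$ is constant there. Writing this constant vector in terms of the nodal values, one sees that $\nabla u_h \equiv 0$ on $\interior(S_i)$ if and only if $u_h$ is constant on $S_i$, if and only if all vertex values $\tu_\ell$ for $\ell \in \simplextovertices{i}$ coincide. Since the edges of $S_i$ cover all pairs of its vertices (in the graph-theoretic sense that the 1-skeleton of $S_i$ is complete), the latter is equivalent to $(\tD_1 \tu)_j = 0$ for every $j \in \simplextoedges{i}$. By nonnegativity of the squared edge differences and the definition \eqref{eq:sum-of-squared-differences}, this is in turn equivalent to $\sumOfSquaredDifferences{\tu}_i = 0$, i.e.\ to $i \in \constantsimplices{\tu}$.

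Next I would compute the support. The union $N \coloneqq \bigcup_i \partial S_i$ of all simplex boundaries is a finite union of $(d{-}1)$-dimensional simplices and therefore has Lebesgue measure zero. Outside of $N$, the point-$x$ either lies in the interior of exactly one cell $S_i$ with $i \in \constantsimplices{\tu}$ (in which case every component of $\nabla u_h(x)$ is zero), or of exactly one cell $S_i$ with $i \in \nonconstantsimplices{\tu}$ (in which case at least one component of $\nabla u_h(x)$ is nonzero, so $x \in \supp(\nabla u_h)$ by \eqref{eq:support-of-gradient}). Consequently,
\begin{equation*}
	\supp(\nabla u_h)
	\;\subseteq\;
	\bigcup_{i \in \nonconstantsimplices{\tu}} S_i
	\;\subseteq\;
	\supp(\nabla u_h) \cup N
	.
\end{equation*}

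Finally, since the cells $S_i$ have pairwise disjoint interiors and $\mu(N)=0$, taking Lebesgue measure gives
\begin{equation*}
	\norm{\nabla u_h}_0
	=
	\mu(\supp(\nabla u_h))
	=
	\sum_{i \in \nonconstantsimplices{\tu}} \mu(S_i)
	=
	\sum_{i \in \nonconstantsimplices{\tu}} \nu_i
	=
	\norm{\sumOfSquaredDifferences{\tu}}_{0,\nu}
	,
\end{equation*}
by the choice of weights in \eqref{eq:cell-volumes-are-weights} and the formula for $\norm{\cdot}_{0,\nu}$ recorded just above the lemma. The only real subtlety is the equivalence in the first paragraph, which hinges on the fact that on a single $d$-simplex, vanishing of the differences along all edges forces the affine function to be constant; everything else is bookkeeping involving the measure-zero cell boundaries.
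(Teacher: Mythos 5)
Your proposal is correct and follows essentially the same route as the paper's proof: reduce to the cellwise equivalence ``$\nabla u_h$ vanishes on $S_i$ $\Leftrightarrow$ all vertex values of $S_i$ coincide $\Leftrightarrow$ $\sumOfSquaredDifferences{\tu}_i = 0$'' and then sum cell volumes, discarding the measure-zero cell boundaries. Your write-up is merely more explicit about the complete 1-skeleton of a $d$-simplex and the null set of interfaces, which the paper compresses into ``different cells intersect only in sets of measure zero.''
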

\begin{proof}
	Since $u_h$ is piecewise affine, $\nabla u_h$ is piecewise constant.
	Hence $\supp(\nabla u_h)$ is the union of entire cells~$S_i$; see \eqref{eq:support-of-gradient}.
	Consider an arbitrary cell~$S_i$.
	Since $u_h$ is affine, $\sumOfSquaredGradComponents{u_h}$ is constant on~$S_i$.
	In particular, $\sumOfSquaredGradComponents{u_h} = 0$ holds on~$S_i$ if and only if the vertex values of~$u_h$ on all vertices of~$S_i$ coincide.
	This is the case if and only if $\sumOfSquaredDifferences{\tu}_i = 0$, \ie, $i \in \constantsimplices{\tu}$.
	Since different cells intersect only in sets of measure zero, the result follows.
\end{proof}

We now consider the discretized counterpart of \eqref{eq:general-problem}:
\begin{equation*}
	\begin{aligned}
		\text{Minimize}
		\quad
		&
		f(u_h)
		\quad
		\text{where }
		u_h \in U_h
		\\
		\text{\st}
		\quad
		&
		\norm{\nabla u_h}_0 \le K
	\end{aligned}
\end{equation*}
with $K \in \interval[][]{0}{\mu(\Omega)}$.
Using \cref{lemma:L0-norm-of-gradient}, we can equivalently express this problem in terms of the cofficient vector~$\tu \in \R^\nvertices$ of $u_h \in U_h$:
\begin{equation}
	\label{eq:general-problem:discrete}
	\begin{aligned}
		\text{Minimize}
		\quad
		&
		f(\tu)
		\coloneqq
		f(u_h)
		\quad
		\text{where }
		\tu \in \R^\nvertices
		\\
		\text{\st}
		\quad
		&
		\norm{\sumOfSquaredDifferences{\tu}}_{0,\nu} \le K
		.
	\end{aligned}
\end{equation}
The slight abuse of notation of re-using the symbol~$f$ will not be harmful since $\tu$ and $u_h$ are in a bijective linear correspondence.
For the discretized setting, we replace \cref{assumption:standing-assumptions} by the following:
\begin{assumption}[Standing assumptions for the discrete setting] \skipline
	\label{assumption:standing-assumptions:finite-dimensions}
	The objective function $f \colon \R^\nvertices \to \overline{\R}$ is proper, lower semicontinuous and radially unbounded.
\end{assumption}

Under these assumptions on~$f$ we can prove the existence of minimizers for problem \eqref{eq:general-problem:discrete}.
However, we first first need to show the closedness of the feasible set
\begin{equation}
		\label{eq:general-problem:discrete:feasible-set}
		F
		\coloneqq
		\setDef[big]{\tu \in \R^\nvertices}{\norm{\sumOfSquaredDifferences{\tu}}_{0,{\nu}} \le K}
		.
	\end{equation}
\begin{lemma}
	\label{lemma:convergent-nodal-values-detect-non-constant-cells}
	Suppose that $\sequence[big](){\tu}{k} \subseteq \R^\nvertices$ satisfies $\sequence{\tu}{k} \to \tu$.
	Then there exists $N \in \N$ such that
	\begin{equation*}
		\nonconstantsimplices{\tu}
		\subseteq
		\nonconstantsimplices{\sequence{\tu}{k}}
		\quad
		\text{for all }
		k \ge N
		.
	\end{equation*}
	In particular, if $\norm{\sumOfSquaredDifferences{\sequence{\tu}{k}}}_{0,\nu} \le K$, then $\norm{\sumOfSquaredDifferences{\tu}}_{0,\nu} \le K$.
\end{lemma}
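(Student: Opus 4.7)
The plan is to exploit the fact that $\sumOfSquaredDifferences{\cdot} \colon \R^\nvertices \to \R^\nsimplices$ is continuous (in fact, a polynomial in the entries of $\tu$) together with the finiteness of the mesh.

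First I would fix an arbitrary index $i \in \nonconstantsimplices{\tu}$, which by definition means $\sumOfSquaredDifferences{\tu}_i \neq 0$. By continuity of the map $\tu \mapsto \sumOfSquaredDifferences{\tu}_i$ (see \eqref{eq:sum-of-squared-differences}) and the convergence $\sequence{\tu}{k} \to \tu$, we have $\sumOfSquaredDifferences{\sequence{\tu}{k}}_i \to \sumOfSquaredDifferences{\tu}_i$. Hence there exists $N_i \in \N$ with $\sumOfSquaredDifferences{\sequence{\tu}{k}}_i \neq 0$ for all $k \ge N_i$, \ie, $i \in \nonconstantsimplices{\sequence{\tu}{k}}$ for all $k \ge N_i$. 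Since $\nonconstantsimplices{\tu} \subseteq \set{1, \ldots, \nsimplices}$ is a finite set, we may simply set $N \coloneqq \max_{i \in \nonconstantsimplices{\tu}} N_i$ to obtain the inclusion $\nonconstantsimplices{\tu} \subseteq \nonconstantsimplices{\sequence{\tu}{k}}$ for all $k \ge N$.

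For the second assertion, I use the set-inclusion just obtained and the explicit formula $\norm{\sumOfSquaredDifferences{\tu}}_{0,\nu} = \sum_{i \in \nonconstantsimplices{\tu}} \nu_i$ from the computations preceding \eqref{eq:cell-volumes-are-weights}. Since all weights $\nu_i > 0$, summing over a subset yields a smaller value:
\begin{equation*}
	\norm{\sumOfSquaredDifferences{\tu}}_{0,\nu}
	=
	\sum_{i \in \nonconstantsimplices{\tu}} \nu_i
	\le
	\sum_{i \in \nonconstantsimplices{\sequence{\tu}{k}}} \nu_i
	=
	\norm{\sumOfSquaredDifferences{\sequence{\tu}{k}}}_{0,\nu}
	\le
	K
\end{equation*}
for any $k \ge N$, which establishes the claim.

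There is no real obstacle here; the lemma is essentially a statement that the $L^0$-type seminorm $\norm{\sumOfSquaredDifferences{\cdot}}_{0,\nu}$ is lower semicontinuous on the finite-dimensional space $\R^\nvertices$, a consequence of the smoothness of $\sumOfSquaredDifferences{\cdot}$ noted after \eqref{eq:sum-of-squared-differences} and of the finiteness of the mesh. The only point worth being careful about is to pick $N$ uniformly in $i$, which is legitimate precisely because $\nonconstantsimplices{\tu}$ has finite cardinality.
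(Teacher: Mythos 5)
Your proof is correct and follows essentially the same route as the paper: continuity of $\tu \mapsto \sumOfSquaredDifferences{\tu}$ plus finiteness of the index set, the only cosmetic difference being that the paper fixes a single $\varepsilon = \frac12 \min_{i \in \nonconstantsimplices{\tu}} \sumOfSquaredDifferences{\tu}_i$ and one $N$ rather than taking a maximum over componentwise thresholds $N_i$. The second assertion, which the paper leaves implicit, is handled correctly by your monotonicity argument with positive weights.
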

\begin{proof}
	Let $\varepsilon \coloneqq \frac{1}{2} \min \setDef[big]{\sumOfSquaredDifferences{\tu}_i}{i \in \nonconstantsimplices{\tu}}$.
	Then there exists $N \in \N$ with $\norm{\sumOfSquaredDifferences{\sequence{\tu}{k}} - \sumOfSquaredDifferences{\tu}}_\infty \le \varepsilon$ for all $k \ge N$.
	This particularly implies $\sumOfSquaredDifferences{\sequence{\tu}{k}}_i \neq 0$ for all $i \in \nonconstantsimplices{\tu}$ and all $k \ge N$.
\end{proof}
Note that this result also shows that cells where $u_h$ is not constant will eventually be detected when the nodal values converge.
\begin{theorem}
	\label{theorem:general-problem:discrete:existence}
	Problem \eqref{eq:general-problem:discrete} possesses a global minimizer.
\end{theorem}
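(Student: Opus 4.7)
The plan is to apply the direct method of the calculus of variations. First I would observe that the feasible set $\mathcal{F} \coloneqq \setDef{\tu \in \R^\nvertices}{\norm{\sumOfSquaredDifferences{\tu}}_{0,\nu} \le K}$ is nonempty because the zero vector satisfies $\sumOfSquaredDifferences{0} = 0$, hence $\norm{\sumOfSquaredDifferences{0}}_{0,\nu} = 0 \le K$. Together with properness of $f$, this means that the infimum $\alpha \coloneqq \inf_{\tu \in \mathcal F} f(\tu)$ lies in $[-\infty, +\infty]$; if $\alpha = +\infty$ the claim is trivial, so I may assume $\alpha < +\infty$, and by the lower boundedness built into the definition of properness (combined with radial unboundedness, which rules out escape to infinity) one actually has $\alpha \in \R$ once a minimizing sequence is produced.

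Next I would select a minimizing sequence $\sequence{\tu}{k} \subseteq \mathcal F$ with $f(\sequence{\tu}{k}) \to \alpha$. Because $f$ is radially unbounded on the finite-dimensional space $\R^\nvertices$, the sequence $(f(\sequence{\tu}{k}))_{k \in \N}$ being bounded above forces the sequence $(\sequence{\tu}{k})$ itself to be bounded. By the Bolzano--Weierstrass theorem, I can extract a subsequence (not relabeled) with $\sequence{\tu}{k} \to \tu^*$ for some $\tu^* \in \R^\nvertices$.

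It remains to verify that $\tu^*$ is feasible and minimizing. Feasibility follows directly from \cref{lemma:convergent-nodal-values-detect-non-constant-cells}: since $\norm{\sumOfSquaredDifferences{\sequence{\tu}{k}}}_{0,\nu} \le K$ for every $k$ and $\sequence{\tu}{k} \to \tu^*$, the lemma yields $\norm{\sumOfSquaredDifferences{\tu^*}}_{0,\nu} \le K$, so $\tu^* \in \mathcal F$. Lower semicontinuity of $f$ then gives
\begin{equation*}
	f(\tu^*) \le \liminf_{k \to \infty} f(\sequence{\tu}{k}) = \alpha,
\end{equation*}
and by definition of $\alpha$ the reverse inequality also holds, so $\tu^*$ is a global minimizer.

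The only nonroutine ingredient is the closedness of the feasible set, which is nontrivial because the $L^0$-type functional $\norm{\sumOfSquaredDifferences{\cdot}}_{0,\nu}$ is not continuous in general; however, \cref{lemma:convergent-nodal-values-detect-non-constant-cells} shows that it is lower semicontinuous along convergent sequences, which is precisely what is needed. Everything else reduces to the standard direct method, made possible by finite dimensionality together with the radial unboundedness hypothesis.
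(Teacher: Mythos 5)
Your argument is correct and is essentially the paper's own proof written out in full: the paper simply packages your direct-method reasoning as an application of Weierstraß' theorem to $f + \characteristicFunction{F}$, after noting that $F$ is nonempty (it contains the zero vector) and closed by \cref{lemma:convergent-nodal-values-detect-non-constant-cells} — exactly the two ingredients you isolate. No substantive difference in approach.
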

\begin{proof}
	The feasible set $F$ \eqref{eq:general-problem:discrete:feasible-set} is nonempty and closed, as shown by $\tnull \in F$ and \cref{lemma:convergent-nodal-values-detect-non-constant-cells}.
	We denote its characteristic function (with values in $\set{0,\infty}$) by $\characteristicFunction{F}$.
	Therefore and by the assumptions, $f + \characteristicFunction{F}$ is proper, lower semicontinuous and radially unbounded.
	Hence, problem \eqref{eq:general-problem:discrete} is solvable by Weierstraß' theorem; see, \eg, \cite[Theorem~1.14]{DharaDutta:2011:1}.
\end{proof}

Similarly as in \eqref{eq:general-problem:largest-K-norm}, we may also restate the cardinality constraint in \eqref{eq:general-problem:discrete} using a difference of weighted norms:
\begin{theorem}
	\label{theorem:L0-norm-gradient-constraint}
	Suppose $u_h \in U_h$ and $K \in \interval[][]{0}{\mu(\Omega)}$.
	Then we have
	\begin{equation*}
		\norm{\nabla u_h}_0
		\le
		K
		\quad
		\Leftrightarrow
		\quad
		\norm{\sumOfSquaredDifferences{\tu}}_{0,\nu}
		\le
		K
		\quad
		\Leftrightarrow
		\quad
		\norm{\sumOfSquaredDifferences{\tu}}_{1,\nu} - \largestKnorm{\sumOfSquaredDifferences{\tu}}{K}[\nu]
		=
		0
		.
	\end{equation*}
\end{theorem}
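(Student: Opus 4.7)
The first equivalence $\norm{\nabla u_h}_0 \le K \Leftrightarrow \norm{\sumOfSquaredDifferences{\tu}}_{0,\nu} \le K$ is exactly the statement of \cref{lemma:L0-norm-of-gradient}, so nothing remains to be done there. For the second equivalence, my plan is to exploit two structural facts: that $w \coloneqq \sumOfSquaredDifferences{\tu} \in \R^\nsimplices$ has nonnegative entries (being entrywise a sum of squares, by the definition in \eqref{eq:sum-of-squared-differences}), so that $\abs{w_i} = w_i$, and that the weights $\nu_i = \mu(S_i)$ are strictly positive. Under these conditions the second equivalence reduces to a purely combinatorial statement about weighted set sums, namely
\begin{equation*}
	\sum_{i=1}^\nsimplices \nu_i w_i = \max \setDef[Big]{\sum_{i \in I} \nu_i w_i}{I \subseteq \set{1,\ldots,\nsimplices} \text{ \st\ } \sum_{i \in I} \nu_i \le K}
	\quad
	\Longleftrightarrow
	\quad
	\sum_{i \in \supp(w)} \nu_i \le K
	.
\end{equation*}

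For the ``$\Leftarrow$'' direction I would take $I \coloneqq \supp(w) = \nonconstantsimplices{\tu}$ as a candidate in the maximum. By assumption $\sum_{i \in I} \nu_i = \norm{w}_{0,\nu} \le K$, so this $I$ is admissible; moreover $\nu_i w_i = 0$ for every $i \notin I$, hence $\sum_{i \in I} \nu_i w_i = \sum_{i=1}^\nsimplices \nu_i w_i$, which forces the difference $\norm{w}_{1,\nu} - \largestKnorm{w}{K}[\nu]$ to be zero (it is always nonnegative, being bounded below by $0$, since one may take $I=\emptyset$). For the ``$\Rightarrow$'' direction I would pick any maximizer $I^*$ attaining $\largestKnorm{w}{K}[\nu]$; the hypothesis then reads $\sum_{i \notin I^*} \nu_i w_i = 0$, and since each $\nu_i > 0$ and each $w_i \ge 0$ this implies $w_i = 0$ for all $i \notin I^*$, i.e.\ $\supp(w) \subseteq I^*$. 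Monotonicity of the weighted cardinality in the index set then yields $\norm{w}_{0,\nu} \le \sum_{i \in I^*} \nu_i \le K$.

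There is no substantive obstacle in the argument; the only point that requires a small bit of care is verifying that the difference $\norm{w}_{1,\nu} - \largestKnorm{w}{K}[\nu]$ is genuinely nonnegative (so that ``$=0$'' is the same as ``$\le 0$'' and the max-characterization actually bites in the forward direction). This is immediate from $\largestKnorm{w}{K}[\nu] \le \norm{w}_{1,\nu}$, which follows because in the definition of $\largestKnorm{\cdot}{K}[\nu]$ one maximizes over a restricted family of subsets while $\norm{w}_{1,\nu}$ corresponds to the unrestricted choice $I = \set{1,\ldots,\nsimplices}$, combined with $\abs{w_i} = w_i \ge 0$. Chaining the three equivalences then completes the proof of \cref{theorem:L0-norm-gradient-constraint}.
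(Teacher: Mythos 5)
Your argument is correct, but for the second equivalence it takes a different route from the paper. The paper's proof is a two-line citation: the first equivalence is \cref{lemma:L0-norm-of-gradient}, and the second is obtained by invoking the general result \cite[Theorem~3.22]{DittrichWachsmuth:2025:1} (the equivalence $\norm{u}_0 \le K \Leftrightarrow \norm{u}_1 - \largestKnorm{u}{K} = 0$ on an arbitrary measure space), here specialized to the weighted counting measure $\mu_\nu$ on $\set{1,\ldots,\nsimplices}$, with a remark that the boundary cases $K=0$ and $K=\mu(\Omega)$ go through by the same arguments. You instead re-derive the finite-dimensional weighted equivalence from scratch, using that $\sumOfSquaredDifferences{\tu}$ is entrywise nonnegative and the weights $\nu_i$ are strictly positive; both directions of your combinatorial argument are sound (the maximum is attained because only finitely many admissible index sets exist, and $\emptyset$ is always admissible, so the edge cases $K=0$ and $K=\mu(\Omega)$ are covered automatically). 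What the paper's approach buys is brevity and reuse of machinery already established for the continuous setting; what yours buys is a self-contained, elementary proof that makes transparent exactly which structural features (nonnegativity of the entries, positivity of the weights, attainment of the maximum) drive the equivalence — note that the nonnegativity is only a convenience, since the same argument runs with $\abs{w_i}$ in place of $w_i$.
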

\begin{proof}
	This follows from \cref{lemma:L0-norm-of-gradient} and \cite[Theorem~3.22]{DittrichWachsmuth:2025:1}.
	The cases $K = 0$ and $K = \mu(\Omega)$ are also valid, as one can verify using the exact same arguments.
\end{proof}

Defining
\begin{equation}
	\label{eq:general-problem:discrete:penalty}
	\phi(\tu)
	\coloneqq
	\norm{\sumOfSquaredDifferences{\tu}}_{1, \nu}
	-
	\largestKnorm{\sumOfSquaredDifferences{\tu}}{K}[\nu]
	,
\end{equation}
we can restate the discrete problem \eqref{eq:general-problem:discrete} equivalently as
\begin{equation}
	\label{eq:general-problem:discrete:largest-K-norm}
	\begin{aligned}
		\text{Minimize}
		\quad
		&
		f(\tu)
		\quad
		\text{where }
		\tu \in \R^\nvertices
		\\
		\text{\st}
		\quad
		&
		\phi(\tu)
		=
		0
	\end{aligned}
\end{equation}
and the associated penalized problem as
\begin{equation}
	\label{eq:general-problem:discrete:penalized}
	\text{Minimize}
	\quad
	f_\rho(\tu)
	\coloneqq
	f(\tu)
	+
	\rho \, \phi(\tu)
	\quad
	\text{where }
	\tu \in \R^\nvertices
\end{equation}
with penalty parameter $\rho > 0$.

We will discuss optimality conditions for the discrete problem \eqref{eq:general-problem:discrete} (or, equivalently, \eqref{eq:general-problem:discrete:largest-K-norm}) in the following section and address algorithmic treatment of the penalized problem \eqref{eq:general-problem:discrete:penalized} in \cref{subsection:DC-reformulation:DC-approach}.

\subsection{Optimality Conditions}
\label{subsection:optimality-conditions}

In this subsection, we derive necessary optimality conditions of Bouligand-type for minimizers of the discrete optimization problem \eqref{eq:general-problem:discrete}.
While these optimality conditions are generally too strong for the limit points of the DC-Algorithm to satisfy, as we will see in \cref{subsection:DC-reformulation:DC-approach}, they are well suited to reveal the intuitive nodal conditions on optimality.
For this subsection, we assume the following from now on:
\begin{assumption}[Additional standing assumptions for the discrete setting] \skipline
	\label{assumption:standing-assumptions:finite-dimensions:differentiability}
	In addition to \cref{assumption:standing-assumptions:finite-dimensions}, the objective function~$f$ is differentiable.
\end{assumption}
We begin by considering the tangent cone
\begin{equation*}
	\tangentCone{F}{\tu}
	\coloneqq
	\setDef[Big]
	{\td \in \R^\nvertices}
	{\exists \sequence[big](){\tu}{k} \subseteq F \text{ \st\ } \sequence{\tu}{k} \to \tu, \; \sequence{\tau}{k} \searrow 0, \; \td = \lim_{k \to \infty} \frac{\sequence{\tu}{k} - \tu}{\sequence{\tau}{k}}}
\end{equation*}
for any feasible $\tu \in F$; see \eqref{eq:general-problem:discrete:feasible-set}.
We say that $\tu \in F$ is a Bouligand-stationary (B-stationary) point in case
\begin{equation}
	\label{eq:B-stationarity}
	f'(\tu) \, \td
	\ge
	0
	\quad
	\text{for all }
	\td \in \tangentCone{F}{\tu}
\end{equation}
holds.
Our main result is \cref{theorem:B-stationarity:explicit-description} --- a nodal characterization of B-stationarity via vanishing averages of derivatives of the objective functional on certain connected subsets of the vertices.

Before proving the main result, we first need to establish some additional relations between the nodal coefficient vector $\tu \in \R^\nvertices$, the corresponding vector $\sumOfSquaredDifferences{\tu}$ from \eqref{eq:sum-of-squared-differences} holding the cellwise sums of squared differences of the vertex values, and its weighted $\ell_0$-pseudo-norm.
The following result shows that the set of non-constant cells may expand under summation:
\begin{lemma}
	\label{Lem:supp}
	Suppose $\tu, \tv \in \R^\nvertices$.
	Then
	\begin{equation*}
		\nonconstantsimplices{\tu + \tv}
		\subseteq
		\nonconstantsimplices{\tu}
		\cup
		\nonconstantsimplices{\tv}
		.
	\end{equation*}
\end{lemma}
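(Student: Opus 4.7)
The plan is to prove the contrapositive: if a cell index $i$ lies in $\constantsimplices{\tu} \cap \constantsimplices{\tv}$, then $i \in \constantsimplices{\tu + \tv}$. Passing to complements within $\set{1, \ldots, \nsimplices}$ yields the claimed inclusion.

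First I would unfold the definition \eqref{eq:sum-of-squared-differences} cellwise. Since $\sumOfSquaredDifferences{\tu}_i$ is the sum over the edges $j \in \simplextoedges{i}$ of the squared quantities $(\tD_1 \tu)_j^2$, and all summands are nonnegative, $i \in \constantsimplices{\tu}$ is equivalent to $(\tD_1 \tu)_j = 0$ for every $j \in \simplextoedges{i}$. The same equivalence holds for $\tv$.

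Next I would invoke the linearity of $\tD_1$: if $(\tD_1 \tu)_j = 0$ and $(\tD_1 \tv)_j = 0$, then $(\tD_1 (\tu + \tv))_j = 0$. Doing this for every $j \in \simplextoedges{i}$ gives $\sumOfSquaredDifferences{\tu + \tv}_i = 0$, i.e., $i \in \constantsimplices{\tu + \tv}$. Taking complements yields $\nonconstantsimplices{\tu + \tv} \subseteq \nonconstantsimplices{\tu} \cup \nonconstantsimplices{\tv}$, which is the assertion.

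There is no real obstacle here; the argument is just a bookkeeping exercise exploiting that $\tD_1$ is linear and the cellwise indicator $\sumOfSquaredDifferences{\cdot}_i$ is a sum of squares of edge differences, so it vanishes exactly when all incident edge differences vanish. (Equivalently, one may simply observe that if the piecewise affine functions $u_h$ and $v_h$ are both constant on the cell $S_i$, then so is $u_h + v_h$.)
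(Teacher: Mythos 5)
Your proof is correct and is essentially the paper's argument read in contrapositive form: both rest on the facts that $\sumOfSquaredDifferences{\cdot}_i$ is a nonnegative sum of the squares $(\tD_1\,\cdot\,)_j^2$ over $j \in \simplextoedges{i}$ and that $\tD_1$ is linear. The paper argues forward from $i \in \nonconstantsimplices{\tu+\tv}$ to exhibit a nonvanishing edge difference, but the content is identical.
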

\begin{proof}
	Suppose $i \in \nonconstantsimplices{\tu + \tv}$, \ie,
	\begin{equation*}
		0
		\neq
		\sumOfSquaredDifferences{\tu + \tv}_i
		=
		\paren[Big][]{\tD_2 \paren[big][]{(\tD_1 (\tu + \tv)) \odot (\tD_1 (\tu + \tv))}}_i
		=
		\sum_{\mrep{j \in \simplextoedges{i}}{}} \paren[big][]{\tD_1 (\tu + \tv)}_j^2
		.
	\end{equation*}
	Consequently, there exists an edge index $j \in \simplextoedges{i}$ such that $\paren[normal][]{\tD_1 (\tu + \tv)}_j \neq 0$.
	Therefore, $\paren[normal][]{\tD_1 \tu}_j$ and $\paren[normal][]{\tD_1 \tv}_j$ cannot both be zero.
	Arguing as above, we conclude that $\sumOfSquaredDifferences{\tu}_i$ and $\sumOfSquaredDifferences{\tv}_i$ cannot both be zero.
	Hence, $i \in \nonconstantsimplices{\tu} \cup \nonconstantsimplices{\tv}$.
\end{proof}

We can now proceed to a description of the tangent cone to the feasible set~$F$.
\begin{theorem}[Characterization of the tangent cone] \skipline
	\label{theorem:tangent-cone:description}
	Suppose that $\tu \in F$ is a feasible point.
	Then
	\begin{equation*}
		\tangentCone{F}{\tu}
		=
		\setDef[big]{\td \in \R^\nvertices}{\norm{\sumOfSquaredDifferences{\tu} + \sumOfSquaredDifferences{\td}}_{0,\nu} \le K}
		.
	\end{equation*}
\end{theorem}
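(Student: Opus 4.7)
The plan is to prove the two inclusions separately. A preliminary observation that will streamline both is the following: since the components of $\sumOfSquaredDifferences{\tu}$ and $\sumOfSquaredDifferences{\td}$ are non-negative, we have
\begin{equation*}
	\norm{\sumOfSquaredDifferences{\tu} + \sumOfSquaredDifferences{\td}}_{0,\nu}
	=
	\sum_{i \in \nonconstantsimplices{\tu} \cup \nonconstantsimplices{\td}} \nu_i
	,
\end{equation*}
so the right-hand side of the claimed identity is just $\setDef[auto]{\td}{\sum_{i \in \nonconstantsimplices{\tu} \cup \nonconstantsimplices{\td}} \nu_i \le K}$. This reduces the problem to tracking the non-constant-cell indices.

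For the inclusion $\subseteq$, I would take $\td \in \tangentCone{F}{\tu}$ with witnessing sequences $\sequence{\tu}{k} \in F$, $\sequence{\tau}{k} \searrow 0$, and set $\sequence{\td}{k} \coloneqq (\sequence{\tu}{k} - \tu)/\sequence{\tau}{k} \to \td$. The key step is to show that $\nonconstantsimplices{\tu} \cup \nonconstantsimplices{\td} \subseteq \nonconstantsimplices{\sequence{\tu}{k}}$ for all sufficiently large~$k$. The inclusion $\nonconstantsimplices{\tu} \subseteq \nonconstantsimplices{\sequence{\tu}{k}}$ is immediate from \cref{lemma:convergent-nodal-values-detect-non-constant-cells}. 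For $\nonconstantsimplices{\td}$, I would argue edgewise: if $i \in \nonconstantsimplices{\td}$, pick $j \in \simplextoedges{i}$ with $(\tD_1 \td)_j \neq 0$, and use $(\tD_1 \sequence{\tu}{k})_j = (\tD_1 \tu)_j + \sequence{\tau}{k} (\tD_1 \sequence{\td}{k})_j$. Either $(\tD_1 \tu)_j \neq 0$ (then $i \in \nonconstantsimplices{\tu}$ and the previous step applies), or $(\tD_1 \tu)_j = 0$, in which case $(\tD_1 \sequence{\tu}{k})_j = \sequence{\tau}{k}(\tD_1 \sequence{\td}{k})_j \neq 0$ for large~$k$ because $\sequence{\tau}{k} > 0$ and $(\tD_1 \sequence{\td}{k})_j \to (\tD_1 \td)_j \neq 0$. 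Either way, $\sumOfSquaredDifferences{\sequence{\tu}{k}}_i \neq 0$. Summing over $i \in \nonconstantsimplices{\tu} \cup \nonconstantsimplices{\td}$ and using $\sequence{\tu}{k} \in F$ yields the desired inequality.

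For the reverse inclusion $\supseteq$, suppose $\td$ satisfies $\sum_{i \in \nonconstantsimplices{\tu} \cup \nonconstantsimplices{\td}} \nu_i \le K$. I would define $\sequence{\tu}{k} \coloneqq \tu + \sequence{\tau}{k} \td$ for a sequence $\sequence{\tau}{k} \searrow 0$ chosen to avoid the finitely many values $-(\tD_1 \tu)_j/(\tD_1 \td)_j$ for which a component of $\tD_1 \tu + \sequence{\tau}{k} \tD_1 \td$ vanishes accidentally; this is possible because the set of forbidden values is finite. For each such $\sequence{\tau}{k}$, an edgewise case analysis identical to the one above shows $(\tD_1 \sequence{\tu}{k})_j \neq 0$ iff $(\tD_1 \tu)_j \neq 0$ or $(\tD_1 \td)_j \neq 0$, whence $\nonconstantsimplices{\sequence{\tu}{k}} = \nonconstantsimplices{\tu} \cup \nonconstantsimplices{\td}$. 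Thus $\norm{\sumOfSquaredDifferences{\sequence{\tu}{k}}}_{0,\nu} \le K$, so $\sequence{\tu}{k} \in F$, and by construction $(\sequence{\tu}{k} - \tu)/\sequence{\tau}{k} = \td$, proving $\td \in \tangentCone{F}{\tu}$.

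The main obstacle I anticipate is the $\subseteq$ direction, more specifically the case $(\tD_1 \tu)_j = 0$ with $(\tD_1 \td)_j \neq 0$: the difference $(\tD_1 \sequence{\tu}{k})_j$ is of order $\sequence{\tau}{k}$ and hence shrinks, yet we still need it to be nonzero in order to detect cell $i$ as non-constant. The observation that $\sequence{\tau}{k} > 0$ together with strict non-vanishing of $(\tD_1 \sequence{\td}{k})_j$ in the limit rescues this, but it is the one place where the strict decrease of $\sequence{\tau}{k}$ (as opposed to merely non-negativity) is genuinely used.
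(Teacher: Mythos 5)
Your proof is correct and follows essentially the same route as the paper: the same witnessing sequences in both directions, with your edgewise case analysis simply inlining what the paper delegates to \cref{Lem:supp} and \cref{lemma:convergent-nodal-values-detect-non-constant-cells}. The only divergence is in the \enquote{$\supseteq$} direction, where your choice of $\sequence{\tau}{k}$ avoiding accidental cancellations is unnecessary --- feasibility of $\tu + \sequence{\tau}{k} \td$ only requires $\nonconstantsimplices{\tu + \sequence{\tau}{k}\td} \subseteq \nonconstantsimplices{\tu} \cup \nonconstantsimplices{\td}$, which holds for every $\sequence{\tau}{k}$, so the paper simply takes $\sequence{\tau}{k} = 1/k$.
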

\begin{proof}
	We begin by showing the inclusion~\enquote{$\subseteq$}.
	Let $\tu \in F$ and $\td \in \tangentCone{F}{\tu}$.
	By definition, there exist $\sequence{\tau}{k} \searrow 0$, $\sequence[big](){\tu}{k} \subseteq F$ with $\sequence{\tu}{k} \to \tu$ and $\td = \lim_{k \to \infty} (\sequence{\tu}{k} - \tu)/\sequence{\tau}{k}$.
	As $\sumOfSquaredDifferences{\tu}$ depends continuously on~$\tu$, we obtain that $\sumOfSquaredDifferences{\sequence{\tu}{k}} \to \sumOfSquaredDifferences{\tu}$ where $\norm{\sumOfSquaredDifferences{\sequence{\tu}{k}}}_{0,\nu} \le K$ and $\norm{\sumOfSquaredDifferences{\tu}}_{0,\nu} \le K$.
	By \cref{lemma:convergent-nodal-values-detect-non-constant-cells}, there exist $N_1, N_2 \in \N$ such that
	\begin{equation*}
		\begin{alignedat}{3}
		\nonconstantsimplices{\tu}
		&\subseteq
		\nonconstantsimplices{\sequence{\tu}{k}}
		\quad
		&&\text{for all }
		k \ge N_1
		,
		\\
		\nonconstantsimplices{\td}
		&\subseteq
		\nonconstantsimplices[Big]{\frac{\sequence{\tu}{k} - \tu}{\sequence{\tau}{k}}}
		=
		\nonconstantsimplices{\sequence{\tu}{k} - \tu}
		\quad
		&&\text{for all }
		k \ge N_2
		.
		\end{alignedat}
	\end{equation*}
	Additionally, with \cref{Lem:supp}, we obtain
	\begin{equation*}
		\nonconstantsimplices{\sequence{\tu}{k} - \tu}
		\subseteq
		\nonconstantsimplices{\sequence{\tu}{k}}
		\cup
		\nonconstantsimplices{- \tu}
		=
		\nonconstantsimplices{\sequence{\tu}{k}}
		\cup
		\nonconstantsimplices{\tu}
		=
		\nonconstantsimplices{\sequence{\tu}{k}}
	\end{equation*}
	for all $k \ge N_1$.

	Altogether, we have for all $k \ge N \coloneqq \max \set{N_1, N_2}$
	\begin{equation*}
		\nonconstantsimplices{\tu}
		\cup
		\nonconstantsimplices{\td}
		\;
		\subseteq
		\;
		\nonconstantsimplices{\sequence{\tu}{k}}
		\cup
		\nonconstantsimplices{\sequence{\tu}{k} - \tu}
		\;
		\subseteq
		\;
		\nonconstantsimplices{\sequence{\tu}{k}}
	\end{equation*}
	and hence
	\makeatletter
	\ltx@ifclassloaded{siamart250106}{%
		\begin{math}
			\norm{\sumOfSquaredDifferences{\tu} + \sumOfSquaredDifferences{\td}}_{0,\nu}
			\le
			\norm{\sumOfSquaredDifferences{\sequence{\tu}{k}}}_{0,\nu}
			\le
			K
			.
		\end{math}
	}{%
		\begin{equation*}
			\norm{\sumOfSquaredDifferences{\tu} + \sumOfSquaredDifferences{\td}}_{0,\nu}
			\le
			\norm{\sumOfSquaredDifferences{\sequence{\tu}{k}}}_{0,\nu}
			\le
			K
			.
		\end{equation*}
	}
	\makeatother

	We now proceed with the reverse inclusion~\enquote{$\supseteq$}.
	Suppose that $\tu \in F$ and $\td \in \R^\nvertices$ satisfies $\norm{\sumOfSquaredDifferences{\tu} + \sumOfSquaredDifferences{\td}}_{0,\nu} \le K$.
	We define $\sequence{\tau}{k} \coloneqq 1/k$ and $\sequence{\tu}{k} \coloneqq \tu + \sequence{\tau}{k} \td$ for all $k \in \N$.
	By \cref{Lem:supp}, we then have for all $k \in \N$
	\begin{equation*}
		\nonconstantsimplices{\sequence{\tu}{k}}
		=
		\nonconstantsimplices{\tu + \sequence{\tau}{k} \td}
		\subseteq
		\nonconstantsimplices{\tu}
		\cup
		\nonconstantsimplices{\sequence{\tau}{k} \td}
		=
		\nonconstantsimplices{\tu}
		\cup
		\nonconstantsimplices{\td}
	\end{equation*}
	and therefore
	\makeatletter
	\ltx@ifclassloaded{siamart250106}{%
		\begin{math}
			\norm{\sumOfSquaredDifferences{\sequence{\tu}{k}}}_{0,\nu}
			\le
			\norm{\sumOfSquaredDifferences{\tu} + \sumOfSquaredDifferences{\td}}_{0,\nu}
			\le
			K
			.
		\end{math}
	}{%
		\begin{equation*}
			\norm{\sumOfSquaredDifferences{\sequence{\tu}{k}}}_{0,\nu}
			\le
			\norm{\sumOfSquaredDifferences{\tu} + \sumOfSquaredDifferences{\td}}_{0,\nu}
			\le
			K
			.
		\end{equation*}
	}
	\makeatother
	This shows $\td \in \tangentCone{F}{\tu}$.
\end{proof}

We now make the representation of the tangent cone more explicit.
\begin{corollary}
	\label{corollary:tangent-cone:explicit-description}
	Suppose that $\tu \in F$ is a feasible point.
	Then
	\begin{equation}
		\label{eq:tangent-cone:explicit-description}
		\tangentCone{F}{\tu}
		=
		\setDef[Big]
		{\td \in \R^\nvertices}
		{\norm{\sumOfSquaredDifferences{\tu}}_{0,\nu} + \sum_{\mrep{i \in \constantsimplices{\tu} \cap \nonconstantsimplices{\td}}{}} \nu_i \le K}
		.
	\end{equation}
	In particular, for any $\td \in \tangentCone{F}{\tu}$, we also have $-\td \in \tangentCone{F}{\tu}$.
	Therefore, the variational inequality \eqref{eq:B-stationarity} of B-stationarity is equivalent to the variational \emph{equality}
	\begin{equation}
		\label{eq:B-stationarity-with-equality}
		f'(\tu) \, \td
		=
		0
		\quad
		\text{for all }
		\td \in \tangentCone{F}{\tu}
		.
	\end{equation}
\end{corollary}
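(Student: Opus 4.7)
The plan is to read off the explicit formula \eqref{eq:tangent-cone:explicit-description} directly from the abstract characterization established in \cref{theorem:tangent-cone:description}, exploiting the sign structure of the map $\sumOfSquaredDifferences{\cdot}$. Every entry of $\sumOfSquaredDifferences{\tv}$ is a sum of squared edge differences and therefore nonnegative. Consequently, no cancellation can occur in the sum $\sumOfSquaredDifferences{\tu} + \sumOfSquaredDifferences{\td}$, and its support satisfies $\supp(\sumOfSquaredDifferences{\tu} + \sumOfSquaredDifferences{\td}) = \nonconstantsimplices{\tu} \cup \nonconstantsimplices{\td}$.

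Next I write this union as the disjoint union $\nonconstantsimplices{\tu} \cup (\constantsimplices{\tu} \cap \nonconstantsimplices{\td})$ and split the weighted cardinality accordingly, which yields $\norm{\sumOfSquaredDifferences{\tu} + \sumOfSquaredDifferences{\td}}_{0,\nu} = \norm{\sumOfSquaredDifferences{\tu}}_{0,\nu} + \sum_{i \in \constantsimplices{\tu} \cap \nonconstantsimplices{\td}} \nu_i$. Comparing this with the abstract description in \cref{theorem:tangent-cone:description} produces exactly the condition appearing on the right-hand side of \eqref{eq:tangent-cone:explicit-description}.

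For the symmetry claim, I observe that $\sumOfSquaredDifferences{-\td} = \sumOfSquaredDifferences{\td}$, since each component only involves squares of $\tD_1 \td$. Hence $\nonconstantsimplices{-\td} = \nonconstantsimplices{\td}$, so the right-hand side of \eqref{eq:tangent-cone:explicit-description} is invariant under $\td \mapsto -\td$. This proves $\td \in \tangentCone{F}{\tu} \Leftrightarrow -\td \in \tangentCone{F}{\tu}$. The transition from the variational inequality \eqref{eq:B-stationarity} to the variational equality \eqref{eq:B-stationarity-with-equality} is then the usual cone-symmetry argument: substituting both $\td$ and $-\td$ into \eqref{eq:B-stationarity} gives $f'(\tu)\,\td \ge 0$ and $-f'(\tu)\,\td \ge 0$, whence $f'(\tu)\,\td = 0$.

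No substantial obstacle is anticipated; the proof is mostly an unpacking of definitions combined with the fact that sums of squares are nonnegative and insensitive to sign flips of the input. The only point requiring a line of writing is the disjointness of the decomposition of the union that underlies the key cardinality identity.
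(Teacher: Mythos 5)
Your proposal is correct and follows essentially the same route as the paper: both deduce the explicit form from \cref{theorem:tangent-cone:description} via the identity $\norm{\sumOfSquaredDifferences{\tu} + \sumOfSquaredDifferences{\td}}_{0,\nu} = \norm{\sumOfSquaredDifferences{\tu}}_{0,\nu} + \sum_{i \in \constantsimplices{\tu} \cap \nonconstantsimplices{\td}} \nu_i$ and conclude the symmetry from $\nonconstantsimplices{-\td} = \nonconstantsimplices{\td}$. You merely make explicit the nonnegativity argument (no cancellation in the sum) that the paper leaves implicit when writing the support of the sum as the union of supports.
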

\begin{proof}
	By \cref{theorem:tangent-cone:description} we know that
	\begin{equation*}
		\tangentCone{F}{\tu}
		=
		\setDef[big]
		{\td \in \R^\nvertices}
		{\norm{\sumOfSquaredDifferences{\tu} + \sumOfSquaredDifferences{\td}}_{0,\nu} \le K}
		.
	\end{equation*}
	By definition of the weighted $\ell_0$-pseudo-norm, we have
	\begin{align*}
		\norm{\sumOfSquaredDifferences{\tu} + \sumOfSquaredDifferences{\td}}_{0,\nu}
		&
		=
		\sum_{\mrep[l]{i \in \nonconstantsimplices{\tu} \cup \nonconstantsimplices{\td}}{\hspace*{10mm}}} \nu_i
		\\
		&
		=
		\sum_{\mrep[c]{i \in \nonconstantsimplices{\tu}}{\hspace*{6mm}}} \nu_i
		+
		\sum_{\mrep[l]{i \in \constantsimplices{\tu} \cap \nonconstantsimplices{\td}}{\hspace*{10mm}}} \nu_i
		\\
		&
		=
		\norm{\sumOfSquaredDifferences{\tu}}_{0,\nu}
		+
		\sum_{\mrep[l]{i \in \constantsimplices{\tu} \cap \nonconstantsimplices{\td}}{\hspace*{10mm}}} \nu_i
		.
	\end{align*}
	Due to $\nonconstantsimplices{\td} = \nonconstantsimplices{-\td}$, the proof is complete.
\end{proof}
The previous corollary shows that tangent directions~$\td$ are constrained only in terms of the volume of cells whose $\tu$-vertex values are constant but whose $(\tu + \td)$-vertex values are not.
Notice that a single vertex value of~$\td$ induces changes in the function~$u_h + d_h$ on all incident cells.
In order to connect the cell-based description \eqref{eq:tangent-cone:explicit-description} of the tangent cone with a vertex-based view, we introduce the following definition.

\begin{definition}[Connected set of vertices, covered cells] \skipline
	\label{definition:subsets-of-vertices}
	\begin{enumerate}
		\item
			A subset $V \subseteq \set{1, \ldots, \nvertices}$ of vertices is said to be connected if any two vertices in~$V$ are connected via edges visiting only vertices in~$V$.

		\item
			A cell $S_i$, $i \in \set{1, \ldots, \nsimplices}$, is said to be covered by the subset of vertices $V \subseteq \set{1, \ldots, \nvertices}$ if $\simplextovertices{i} \subseteq V$.
			The set of all cells covered by $V$ is denoted by
			\begin{equation*}
				\coveredsimplices{V}
				\coloneqq
				\setDef[big]{i \in \set{1, \ldots, \nsimplices}}{\simplextovertices{i} \subseteq V}
				.
			\end{equation*}
	\end{enumerate}
\end{definition}

As seen in \eqref{eq:tangent-cone:explicit-description}, tangent directions~$\td \in \tangentCone{F}{\tu}$ are characterized by conditions on the set of cells $\constantsimplices{\tu} \cap \nonconstantsimplices{\td}$.
The main result is now based on using these characterizations to obtain a vertex-based point of view on B-stationarity.
It turns out that the crucial tangent directions~$\td$ are binary vectors, which allow for the representation $\nonconstantsimplices{\td} = \vertextosimplices{V} \setminus \coveredsimplices{V}$ with $V = \supp(\td)$.

\begin{theorem}[Characterization of $B$-stationarity] \skipline
	\label{theorem:B-stationarity:explicit-description}
	Suppose that $\tu \in F$ is a feasible point.
	Then $\tu$ is a B-stationary point of \eqref{eq:general-problem:discrete} if and only if
	\begin{equation}
		\label{eq:B-stationarity:explicit-description}
		\sum_{\ell \in V} \nabla f(\tu)_\ell
		=
		0
		\quad
		\text{for all }
		V \in \cV(\tu)
		,
	\end{equation}
	where
	\begin{equation*}
		\cV(\tu)
		\coloneqq
		\setDef[bigg]{V \subseteq \set{1, \ldots, \nvertices}}{%
			V
			\text{ is connected and }
			\norm{\sumOfSquaredDifferences{\tu}}_{0,\nu}
			+
			\sum_{\mrep[r]{i \in \constantsimplices{\tu} \cap \paren[auto](){\vertextosimplices{V} \setminus \coveredsimplices{V}}}{}}
			\nu_i
			\le
			K
		}
	\end{equation*}
	contains all subsets of nodes whose neighboring but non-covered cells that overlap the cells on which $\tu$ is constant do not add up to more volume than there is slack in the gradient support constraint.
\end{theorem}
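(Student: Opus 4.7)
The plan is to establish the equivalence by testing the variational equality \eqref{eq:B-stationarity-with-equality} against indicator directions $\mathbf{1}_V \in \R^\nvertices$. The key combinatorial identity driving everything is
\begin{equation*}
	\nonconstantsimplices{\mathbf{1}_V}
	=
	\vertextosimplices{V} \setminus \coveredsimplices{V}
	\quad \text{for every } V \subseteq \set{1, \ldots, \nvertices},
\end{equation*}
which I would obtain from a three-case analysis on an arbitrary cell $S_i$: if $\simplextovertices{i} \subseteq V$, then $\mathbf{1}_V \equiv 1$ on $S_i$; if $\simplextovertices{i} \cap V = \emptyset$, then $\mathbf{1}_V \equiv 0$ on $S_i$; otherwise the incident vertex values attain both $0$ and $1$, so $\sumOfSquaredDifferences{\mathbf{1}_V}_i > 0$. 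Together with \cref{corollary:tangent-cone:explicit-description}, this shows that $\mathbf{1}_V \in \tangentCone{F}{\tu}$ precisely when the inequality in the definition of $\cV(\tu)$ is satisfied.

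Necessity then follows immediately: for any $V \in \cV(\tu)$, the indicator $\mathbf{1}_V$ is a tangent direction, so \eqref{eq:B-stationarity-with-equality} yields $\sum_{\ell \in V} \nabla f(\tu)_\ell = f'(\tu) \, \mathbf{1}_V = 0$. Connectedness of $V$ plays no role here; it only restricts which subsets $V$ appear in the condition.

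For sufficiency, I would represent an arbitrary $\td \in \tangentCone{F}{\tu}$ as a finite signed combination of indicators $\mathbf{1}_W$ with connected $W$. Splitting $\td = \td^+ - \td^-$ and listing the distinct nonzero values of $|\td|$ as $0 = s_0 < s_1 < \ldots < s_M$, a layer-cake argument gives
\begin{equation*}
	\td
	=
	\sum_{k=1}^M (s_k - s_{k-1}) \Bigl(\mathbf{1}_{\setDef{\ell}{\td_\ell \ge s_k}} - \mathbf{1}_{\setDef{\ell}{\td_\ell \le -s_k}}\Bigr).
\end{equation*}
Decomposing each of the occurring level sets into its connected components $W$ further refines this into a signed sum of indicators of connected vertex sets. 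The goal is then to show that every such $W$ belongs to $\cV(\tu)$, so that applying the assumed identity \eqref{eq:B-stationarity:explicit-description} term by term produces $f'(\tu) \, \td = 0$, which by \eqref{eq:B-stationarity-with-equality} is B-stationarity.

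The hard part is the membership claim $W \in \cV(\tu)$. Using the identity above, it reduces to showing $\vertextosimplices{W} \setminus \coveredsimplices{W} \subseteq \nonconstantsimplices{\td}$, which combined with $\td \in \tangentCone{F}{\tu}$ and \cref{corollary:tangent-cone:explicit-description} yields the volume inequality in the definition of $\cV(\tu)$. For $i \in \vertextosimplices{W} \setminus \coveredsimplices{W}$, pick $\ell_1 \in \simplextovertices{i} \cap W$ and $\ell_2 \in \simplextovertices{i} \setminus W$. Either $\ell_2$ lies outside the ambient super-/sub-level set, in which case $\td_{\ell_1}$ and $\td_{\ell_2}$ differ and $\td$ is non-constant on $S_i$; or $\ell_2$ lies in the ambient level set but in a connected component different from $W$. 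The latter alternative is excluded by the simplex structure: since $\ell_1$ and $\ell_2$ are both vertices of the simplex $S_i$, they are joined by a single edge whose endpoints both lie in the same level set, which forces $\ell_2 \in W$ by connectedness and contradicts the choice of $\ell_2$. The reasoning is identical for sublevel components, which closes the proof.
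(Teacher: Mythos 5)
Your proof is correct, and its overall architecture coincides with the paper's: both use the symmetrized variational equality \eqref{eq:B-stationarity-with-equality}, the identity $\nonconstantsimplices{\mathbf{1}_V} = \vertextosimplices{V} \setminus \coveredsimplices{V}$ (which settles necessity in one line in both treatments), and, for sufficiency, a representation of an arbitrary $\td \in \tangentCone{F}{\tu}$ as a signed combination of indicators of connected vertex sets, each shown to lie in $\cV(\tu)$ via the inclusion of its non-constant cells into $\nonconstantsimplices{\td}$. The genuine difference is the decomposition: the paper partitions $\supp(\td)$ into \emph{maximal connected sets of equal $\td$-value} and weights each indicator by that common value, whereas you use a layer-cake expansion over super- and sub-level sets of $\td$ followed by a split into connected components, so your sets $W$ are nested rather than disjoint and generally differ from the paper's. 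Both routes close the crucial membership step $W \in \cV(\tu)$ with the same two-case analysis, and both cases ultimately rest on the same geometric fact that any two vertices of a $d$-simplex are joined by an edge, so that a neighbouring vertex in the same level set (respectively, with the same value) cannot sit in a different component (respectively, a different maximal set). Neither argument is shorter or more general than the other; the paper's partition yields a canonical, unique decomposition of $\td$, while your layer-cake variant makes the coefficients $s_k - s_{k-1}$ and the termwise application of \eqref{eq:B-stationarity:explicit-description} entirely mechanical.
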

\begin{proof}
	Suppose that $\tu \in F$ is feasible and B-stationary.
	We first show that for any $V \in \cV(\tu)$,
	\begin{equation*}
		\td
		\coloneqq
		\sum_{\ell \in V} \te_\ell
		\in
		\tangentCone{F}{\tu}
	\end{equation*}
	holds, where $\te_\ell$ is the $\ell$-th standard basis vector in $\R^\nvertices$.
	For such $\td$, a cell $S_i$ belongs to $\nonconstantsimplices{\td}$ if and only if $S_i$ intersects but is not covered by~$V$, \ie
	\begin{equation*}
		\constantsimplices{\tu} \cap \nonconstantsimplices{\td}
		=
		\constantsimplices{\tu} \cap \paren[big](){\vertextosimplices{V} \setminus \coveredsimplices{V}}
		.
	\end{equation*}
	Due to $V \in \cV(\tu)$ and the description \eqref{eq:tangent-cone:explicit-description} of the tangent cone, we obtain $\td \in \tangentCone{F}{\tu}$.
	$B$-stationarity \eqref{eq:B-stationarity-with-equality} gives
	\begin{equation*}
		\sum_{\ell \in V} \nabla f(\tu)_\ell
		=
		f'(\tu) \, \td
		=
		0
	\end{equation*}
	and consequently, \eqref{eq:B-stationarity:explicit-description} holds.

	Conversely, assume that $\tu \in F$ satisfies condition \eqref{eq:B-stationarity:explicit-description}.
	We have to show $f'(\tu) \, \td \ge 0$ for all $\td \in \tangentCone{F}{\tu}$.

	Let $\td \in \tangentCone{F}{\tu}$ be arbitrary but fixed.
	By \cref{corollary:tangent-cone:explicit-description}, we know that
	\begin{equation*}
		\norm{\sumOfSquaredDifferences{\tu}}_{0,\nu}
		+
		\sum_{\mrep{i \in \constantsimplices{\tu} \cap \nonconstantsimplices{\td}}{}} \nu_i
		\le
		K
		.
	\end{equation*}
	Let us decompose $\td$ into a linear combination of binary vectors~$\sequence{\td}{k} \in \set{0,1}^\nvertices$ whose support~$\sequence{V}{k}$ belongs to~$\cV(\tu)$.
	To this end, select a vertex from the support of~$\td$, and extend it to the maximal connected set~$\sequence{V}{1}$ of vertices of equal $\td$-value, referred to as $\sequence{a}{1}$.
	Repeat this process with the remaining vertices that are not already in such a maximally connected set until the support of~$\td$ is exhausted.
	This process yields a unique decomposition
	\begin{equation*}
		\td
		=
		\sum_{k=1}^{n_\td} \sequence{a}{k} \sequence{\td}{k}
		=
		\sum_{k=1}^{n_\td} \sequence{a}{k} \sum_{\ell \in \sequence{V}{k}} \te_\ell
	\end{equation*}
	with $n_\td \in \N_0$ and $\sequence{a}{k} \neq 0$ for all $k = 1, \ldots, n_\td$.
	Indeed, we obtain $\sequence{V}{k} \in \cV(\tu)$ because
	\makeatletter
	\ltx@ifclassloaded{siamart250106}{%
		\begin{align*}
			\sum_{\mrep{i \in \constantsimplices{\tu} \cap \paren[auto](){\vertextosimplices{\sequence{V}{k}} \setminus \coveredsimplices{\sequence{V}{k}}}}{\hspace*{40mm}}}
			\nu_i
			=
			\sum_{\mrep{i \in \constantsimplices{\tu} \cap \nonconstantsimplices{\sequence{\td}{k}}}{\hspace*{18mm}}} \nu_i
			\le
			\sum_{\mrep{i \in \constantsimplices{\tu} \cap \nonconstantsimplices{\td}}{\hspace*{18mm}}}
			\nu_i
			\le
			K
			-
			\norm{\sumOfSquaredDifferences{\tu}}_{0,\nu}
			,
		\end{align*}
	}{%
		\begin{align*}
			\sum_{\mrep[r]{i \in \constantsimplices{\tu} \cap \paren[auto](){\vertextosimplices{\sequence{V}{k}} \setminus \coveredsimplices{\sequence{V}{k}}}}{\hspace*{6mm}}}
			\nu_i
			&
			=
			\sum_{\mrep[l]{i \in \constantsimplices{\tu} \cap \nonconstantsimplices{\sequence{\td}{k}}}{\hspace*{10mm}}} \nu_i
			\\
			&
			\le
			\sum_{\mrep[l]{i \in \constantsimplices{\tu} \cap \nonconstantsimplices{\td}}{\hspace*{10mm}}}
			\nu_i
			\\
			&
			\le
			K
			-
			\norm{\sumOfSquaredDifferences{\tu}}_{0,\nu}
			,
		\end{align*}
	}
	\makeatother
	where in the first equality and inequality, $\vertextosimplices{\sequence{V}{k}} \setminus \coveredsimplices{\sequence{V}{k}}=$$\nonconstantsimplices{\sequence{\td}{k}} \subseteq \nonconstantsimplices{\td}$ is a consequence of the maximality of~$\sequence{V}{k}$, and the second inequality follows from \eqref{eq:tangent-cone:explicit-description}.

	Accordingly, we see that
	\begin{equation*}
		f'(\tu) \, \td
		=
		\sum_{k=1}^{n_\td} \sequence{a}{k} f'(\tu) \sum_{\ell \in \sequence{V}{k}} \te_\ell
		=
		\sum_{k=1}^{n_d} \sequence{a}{k} \sum_{\ell \in \sequence{V}{k}} \nabla f(\tu)_\ell
		=
		0
		,
	\end{equation*}
	where the last equality follows from \eqref{eq:B-stationarity:explicit-description}.
\end{proof}

\Cref{figure:subsets-of-vertices} illustrates the essential quantities in the formulation of \cref{theorem:B-stationarity:explicit-description} and the two following corollaries that consider the special cases of \eqref{eq:B-stationarity:explicit-description} where $V$ consists of a single vertex that is not vertex of any of the simplices where $u_h$ is constant (\cref{figure:subsets-of-vertices:c}), and the case where $V$ covers all cells in $\constantsimplices{\tu}$ (\cref{figure:subsets-of-vertices:d}), respectively.

\begin{figure}[!h]
	\centering
	\subfloat[$u_h$ constant.]{%
		\begin{tikzpicture}
			\foreach \i in {0, ..., 3}{
				\foreach \j in {0, ..., 4}{
					\draw[black] (\i,\j) -- (\i+1,\j);
				}
			}

			\foreach \i in {0, ..., 3}{
				\foreach \j in {0, ..., 3}{
					\draw[black] (\i,\j) -- (\i,\j+1);
					\draw[black] (\i,\j) -- (\i+1,\j+1);
				}
			}

			\draw[black] (4,0) -- (4,4);

			\fill[color = red, opacity = 0.3] (0,0) -- (0,2) -- (1,2) -- (2,3) -- (3,3) -- (3,2) -- (2,1) -- (1,0) -- cycle;

			\path (0,0) circle(1.5pt);
		\end{tikzpicture}
		\label{figure:subsets-of-vertices:a}
	}
	\quad
	\subfloat[General situation of \cref{theorem:B-stationarity:explicit-description}.]{%
		\begin{tikzpicture}
			\foreach \i in {0, ..., 3}{
				\foreach \j in {0, ..., 4}{
					\draw[black] (\i,\j) -- (\i+1,\j);
				}
			}

			\foreach \i in {0, ..., 3}{
				\foreach \j in {0, ..., 3}{
					\draw[black] (\i,\j) -- (\i,\j+1);
					\draw[black] (\i,\j) -- (\i+1,\j+1);
				}
			}

			\draw[black] (4,0) -- (4,4);

			\fill[color = red, opacity = 0.3] (0,0) -- (0,2) -- (1,2) -- (2,3) -- (3,3) -- (3,2) -- (2,1) -- (1,0) -- cycle;

			\fill[color = blue, opacity = 0.3] (0,0) -- (0,2) -- (2,4) -- (3,4) -- (3,2) -- (2,1) -- (1,1) -- cycle;

			\draw[blue, fill=blue] (0,1) circle(1.5pt);
			\draw[blue, fill=blue] (1,2) circle(1.5pt);
			\draw[blue, fill=blue] (2,3) circle(1.5pt);
			\draw[blue, fill=blue] (2,2) circle(1.5pt);
			\draw[blue, fill=blue] (1,2) circle(1.5pt);

			\draw[blue, pattern = north east lines, pattern color = blue] (0,1) -- (2,3) -- (2,2) -- (1,2) -- cycle;

			\path (0,0) circle(1.5pt);
		\end{tikzpicture}
		\label{figure:subsets-of-vertices:b}
	}

	\subfloat[Special situation of \cref{corollary:B-stationarity:outside}.]{%
		\begin{tikzpicture}
			\foreach \i in {0, ..., 3}{
				\foreach \j in {0, ..., 4}{
					\draw[black] (\i,\j) -- (\i+1,\j);
				}
			}

			\foreach \i in {0, ..., 3}{
				\foreach \j in {0, ..., 3}{
					\draw[black] (\i,\j) -- (\i,\j+1);
					\draw[black] (\i,\j) -- (\i+1,\j+1);
				}
			}

			\draw[black] (4,0) -- (4,4);

			\fill[color = red, opacity = 0.3] (0,0) -- (0,2) -- (1,2) -- (2,3) -- (3,3) -- (3,2) -- (2,1) -- (1,0) -- cycle;

			\fill[color = blue, opacity = 0.3] (2,0) -- (2,1) -- (3,2) -- (4,2) -- (4,1) -- (3,0) -- cycle;

			\draw[blue, fill=blue] (3,1) circle(1.5pt);

			\path (0,0) circle(1.5pt);
		\end{tikzpicture}
		\label{figure:subsets-of-vertices:c}
	}
	\quad
	\subfloat[Special situation of \cref{corollary:B-stationarity:covered}.]{%
		\begin{tikzpicture}
			\fill[color = red, opacity = 0.3] (0,0) -- (0,2) -- (1,2) -- (2,3) -- (3,3) -- (3,2) -- (2,1) -- (1,0) -- cycle;

			\fill[color = blue, opacity = 0.3] (0,0) -- (0,3) -- (1,4) -- (2,4) -- (4,4) -- (4,2) -- (2,0) -- cycle;

			\fill[blue, pattern = north east lines, pattern color = blue] (0,0) -- (0,2) -- (1,3) -- (2,3) -- (3,3) -- (3,2) -- (2,1) -- (1,0) -- cycle;

			\draw[black] (4,0) -- (4,4);

			\foreach \i in {0, ..., 3}{
				\foreach \j in {0, ..., 4}{
					\draw[black] (\i,\j) -- (\i+1,\j);
				}
			}

			\foreach \i in {0, ..., 3}{
				\foreach \j in {0, ..., 3}{
					\draw[black] (\i,\j) -- (\i,\j+1);
					\draw[black] (\i,\j) -- (\i+1,\j+1);
				}
			}
			\draw[blue, fill=blue] (0,0) circle(1.5pt);
			\draw[blue, fill=blue] (1,0) circle(1.5pt);
			\draw[blue, fill=blue] (0,1) circle(1.5pt);
			\draw[blue, fill=blue] (1,1) circle(1.5pt);
			\draw[blue, fill=blue] (2,1) circle(1.5pt);
			\draw[blue, fill=blue] (0,2) circle(1.5pt);
			\draw[blue, fill=blue] (1,2) circle(1.5pt);
			\draw[blue, fill=blue] (2,2) circle(1.5pt);
			\draw[blue, fill=blue] (3,2) circle(1.5pt);
			\draw[blue, fill=blue] (2,3) circle(1.5pt);
			\draw[blue, fill=blue] (3,3) circle(1.5pt);
			\draw[blue, fill=blue] (1,3) circle(1.5pt);

			\draw[color = blue] (0,0) -- (0,2) -- (1,2) -- (2,3) -- (3,3) -- (3,2) -- (2,1) -- (1,0) -- cycle;
		\end{tikzpicture}
		\label{figure:subsets-of-vertices:d}
	}
	\caption[]{%
		Consider a feasible function $u_h \in U_h$ on the triangulated unit square that is constant exactly on the simplices marked in red in \cref{figure:subsets-of-vertices:a} with the corresponding index set $\constantsimplices{\tu}$.
		The quantity $\norm{\sumOfSquaredDifferences{\tu}}_{0,\nu}$ is the area of the complement (white cells).

		\Cref{figure:subsets-of-vertices:b} shows a connected set of four vertices $\sequence{V}{1}$ (blue circles).
		The simplices corresponding to $\vertextosimplices{\sequence{V}{1}}$ are overlayed in blue.
		The set $\sequence{V}{1}$ covers one cell $\coveredsimplices{\sequence{V}{1}}$ hatched in blue.
		The cells in the intersection $\constantsimplices{\tu} \cap \paren[auto](){\vertextosimplices{\sequence{V}{1}} \setminus \coveredsimplices{\sequence{V}{1}}}$ are the ones in purple, without the hatched cell.
		The set $\sequence{V}{1}$ belongs to $\cV(\tu)$ if and only if the combined area of this intersection (purple, non-hatched) and the previously white cells does not exceed~$K$.
		Note that $\sequence{V}{1}$ is not maximally connected, in contrast to the proof of \cref{theorem:B-stationarity:explicit-description}.

		\Cref{figure:subsets-of-vertices:c} shows a vertex set $\sequence{V}{2}$ consisting of a single vertex (blue circle).
		The simplices corresponding to $\vertextosimplices{\sequence{V}{2}}$ are blue.
		No cells are covered by $\sequence{V}{2}$, hence $\coveredsimplices{\sequence{V}{2}} = \emptyset$.
		Because the colored regions do not overlap, there are no purple, non-hatched cells, \ie we have $\constantsimplices{\tu} \cap \paren[auto](){\vertextosimplices{\sequence{V}{2}} \setminus \coveredsimplices{\sequence{V}{2}}} = \emptyset$, so $\sequence{V}{2} \in \cV(\tu)$.

		\Cref{figure:subsets-of-vertices:d} shows a vertex set $\sequence{V}{3}$ (blue circles).
		The simplices corresponding to $\vertextosimplices{\sequence{V}{3}}$ are overlayed blue.
		All cells corresponding to $\constantsimplices{\tu}$ are in $\coveredsimplices{\sequence{V}{3}}$ (hatched).
		Again, there are no purple, non-hatched cells, thus $\sequence{V}{3} \in \cV(\tu)$.
	}
	\label{figure:subsets-of-vertices}
\end{figure}

\begin{corollary}
	\label{corollary:B-stationarity:outside}
	Suppose that $\tu \in F$ is feasible and B-stationary.
	Then
	\begin{equation*}
		\nabla f(\tu)_\ell
		=
		0
		\quad
		\text{for all }
		\ell
		\in
		\set{1,\dots,\nvertices} \setminus \simplextovertices{\constantsimplices{\tu}}
		.
	\end{equation*}
\end{corollary}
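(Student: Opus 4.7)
The plan is to apply \cref{theorem:B-stationarity:explicit-description} with the singleton choice $V = \set{\ell}$ for each $\ell$ in the claimed set. Since a single vertex is trivially connected, the only thing I need to verify is the cell-volume condition in the definition of $\cV(\tu)$, after which \eqref{eq:B-stationarity:explicit-description} immediately yields $\nabla f(\tu)_\ell = 0$.

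First I would unpack the hypothesis $\ell \in \set{1,\dots,\nvertices} \setminus \simplextovertices{\constantsimplices{\tu}}$: by definition of $\simplextovertices{\cdot}$ applied to a set of cells, this is exactly the statement that $\ell$ is not a vertex of any cell $S_i$ with $i \in \constantsimplices{\tu}$. Equivalently, every cell incident to $\ell$ is non-constant, i.e., $\vertextosimplices{\set{\ell}} \subseteq \nonconstantsimplices{\tu}$, so
\begin{equation*}
    \constantsimplices{\tu} \cap \paren[big](){\vertextosimplices{\set{\ell}} \setminus \coveredsimplices{\set{\ell}}}
    \;\subseteq\;
    \constantsimplices{\tu} \cap \vertextosimplices{\set{\ell}}
    \;=\;
    \emptyset.
\end{equation*}

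Consequently the sum appearing in the definition of $\cV(\tu)$ is empty, and the defining inequality reduces to $\norm{\sumOfSquaredDifferences{\tu}}_{0,\nu} \le K$, which holds by feasibility $\tu \in F$. Therefore $\set{\ell} \in \cV(\tu)$, and \cref{theorem:B-stationarity:explicit-description} gives $\nabla f(\tu)_\ell = \sum_{m \in \set{\ell}} \nabla f(\tu)_m = 0$.

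There is no real obstacle here; the corollary is essentially a direct readout of \cref{theorem:B-stationarity:explicit-description} in the simplest non-trivial case. The only subtlety worth making explicit is the interpretation of $\simplextovertices{\constantsimplices{\tu}}$ as a union of images (per the convention announced in \cref{subsection:discretization-constraint-reformulation-existence}), which is what turns the hypothesis on $\ell$ into the disjointness statement $\constantsimplices{\tu} \cap \vertextosimplices{\set{\ell}} = \emptyset$ used above.
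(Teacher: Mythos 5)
Your proof is correct and follows exactly the paper's argument: take $V = \set{\ell}$, observe that the hypothesis on $\ell$ gives $\vertextosimplices{V} \subseteq \nonconstantsimplices{\tu}$ and hence $\constantsimplices{\tu} \cap \paren[big](){\vertextosimplices{V} \setminus \coveredsimplices{V}} = \emptyset$, so $V \in \cV(\tu)$ and \cref{theorem:B-stationarity:explicit-description} applies. Your additional remarks on connectedness of a singleton and on feasibility closing the inequality are details the paper leaves implicit, but the route is the same.
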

\begin{proof}
	Let $V \coloneqq \set{\ell}$ for an index $\ell \in \set{1, \dots, \nvertices} \setminus \simplextovertices{\constantsimplices{\tu}}$.
	Then $\vertextosimplices{V} \subseteq \nonconstantsimplices{\tu}$ holds and therefore
	\begin{equation*}
		\constantsimplices{\tu}
		\cap
		\paren[auto](){\vertextosimplices{V} \setminus \coveredsimplices{V}}
		\;
		\subseteq
		\;
		\constantsimplices{\tu}
		\cap
		\vertextosimplices{V}
		\;
		\subseteq
		\;
		\constantsimplices{\tu}
		\cap
		\nonconstantsimplices{\tu}
		=
		\emptyset
		,
	\end{equation*}
	so $V \in \cV(\tu)$, and the result immediately follows from \cref{theorem:B-stationarity:explicit-description}.
\end{proof}

\begin{corollary}
	\label{corollary:B-stationarity:covered}
	Suppose that $\tu \in F$ is feasible and B-stationary.
	Consider $V \subseteq \set{1, \ldots, \nvertices}$ such that $\constantsimplices{\tu} \subseteq \coveredsimplices{V}$.
	Then
	\begin{equation*}
		\sum_{\ell \in V} \nabla f(\tu)_\ell
		=
		0
		.
	\end{equation*}
\end{corollary}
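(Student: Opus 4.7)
The plan is to reduce this corollary to \cref{theorem:B-stationarity:explicit-description} by decomposing $V$ into its maximal connected components (in the graph whose vertices are the mesh vertices and whose edges are the mesh edges restricted to $V$) and showing that each component lies in $\cV(\tu)$. The corollary itself imposes no connectedness assumption on $V$, so this decomposition is the natural way to leverage the hypothesis $\constantsimplices{\tu} \subseteq \coveredsimplices{V}$.

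Let $V_1, \ldots, V_m$ be the connected components of $V$. The crucial step is to verify that, for each $k \in \set{1,\ldots,m}$,
\begin{equation*}
	\constantsimplices{\tu} \cap \paren[big](){\vertextosimplices{V_k} \setminus \coveredsimplices{V_k}} = \emptyset.
\end{equation*}
Suppose to the contrary that some index $i$ belongs to this intersection. Then $i \in \constantsimplices{\tu} \subseteq \coveredsimplices{V}$, so every vertex of $S_i$ lies in $V$, and at least one such vertex $\ell_0$ lies in $V_k$. Since $S_i$ is a $d$-simplex, any two of its vertices are joined by a single mesh edge; hence every vertex of $S_i$ is adjacent (via an edge lying entirely in $V$) to $\ell_0$ and therefore lies in the same connected component $V_k$. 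This forces $i \in \coveredsimplices{V_k}$, contradicting the assumption that $i \notin \coveredsimplices{V_k}$.

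With this emptiness established, the sum appearing in the definition of $\cV(\tu)$ collapses, leaving the requirement $\norm{\sumOfSquaredDifferences{\tu}}_{0,\nu} \le K$, which holds by feasibility of $\tu \in F$. Together with the connectedness of $V_k$ by construction, this gives $V_k \in \cV(\tu)$. Applying \cref{theorem:B-stationarity:explicit-description} to each component yields $\sum_{\ell \in V_k} \nabla f(\tu)_\ell = 0$ for every $k$, and summing over $k$ produces the desired identity. The only mildly subtle point is the pairwise adjacency of the vertices of $S_i$ that powers the component-trapping argument; beyond that, no machinery beyond the preceding theorem is required.
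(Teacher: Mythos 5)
Your proof is correct, and it reaches the conclusion by the same basic mechanism as the paper — namely, observing that $\constantsimplices{\tu} \subseteq \coveredsimplices{V}$ forces $\constantsimplices{\tu} \cap \paren[auto](){\vertextosimplices{V} \setminus \coveredsimplices{V}} = \emptyset$, so that the sum in the definition of $\cV(\tu)$ vanishes and feasibility of $\tu$ supplies the remaining inequality. The difference is that the paper simply declares $V \in \cV(\tu)$ and invokes \cref{theorem:B-stationarity:explicit-description}, silently passing over the fact that membership in $\cV(\tu)$ also requires $V$ to be connected, a hypothesis the corollary does not impose. Your decomposition of $V$ into connected components $V_1, \ldots, V_m$, together with the clique argument (all vertices of a covered $d$-simplex are pairwise joined by mesh edges lying in $V$, so a covered simplex touching $V_k$ is covered by $V_k$), closes exactly this gap: it shows each $V_k \in \cV(\tu)$, and summing the resulting identities over the partition recovers the claim for the possibly disconnected $V$. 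So your argument is, if anything, the more complete one; the only cost is the extra component-trapping step, which the paper's one-line proof avoids by (implicitly) restricting attention to connected $V$.
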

\begin{proof}
	The assumptions on~$V$ imply that
	\begin{equation*}
		\constantsimplices{\tu}
		\cap
		\paren[auto](){\vertextosimplices{V} \setminus \coveredsimplices{V}}
		=
		\emptyset
		,
	\end{equation*}
	so $V \in \cV(\tu)$ holds, and the result immediately follows from \cref{theorem:B-stationarity:explicit-description}.
\end{proof}

In case the cardinality constraint is active at~$\tu$ in a relaxed sense that takes the discrete mesh structure into account, we can further refine the characterization of B-stationarity in \cref{theorem:B-stationarity:explicit-description}.
We refer to this situation as exact feasibility, and it does not allow for any additional non-constant behavior.
\begin{definition}[Exact feasibility] \skipline
	Let $K \in \interval[][]{0}{\mu(\Omega)}$.
	We say that $\tu \in F$ is exactly feasible with respect to the constraint $\norm{\sumOfSquaredDifferences{\tu}}_{0,\nu} \le K$ if
	\begin{equation*}
		\norm{\sumOfSquaredDifferences{\tu}}_{0,\nu} + \nu_i
		>
		K
	\end{equation*}
	holds for all $i \in \constantsimplices{\tu}$.
\end{definition}

\begin{corollary}[Characterization of $B$-stationarity in the exactly feasible case]
	Let $\tu \in F$ be exactly feasible.
	Then $\tu$ is a B-stationary point of \eqref{eq:general-problem:discrete} if and only if
	\begin{equation}
		\label{eq:B-stationarity:explicit-description:exact-feasibility}
		\sum_{\ell \in V} \nabla f(\tu)_\ell
		=
		0
	\end{equation}
	for all connected $V \subseteq \set{1, \dots, \nvertices}$ such that $\constantsimplices{\tu} \cap \paren[auto](){\vertextosimplices{V} \setminus \coveredsimplices{V}} = \emptyset$.
\end{corollary}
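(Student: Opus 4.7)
The plan is to reduce this to \cref{theorem:B-stationarity:explicit-description} by showing that, under the exact feasibility hypothesis, the family $\cV(\tu)$ appearing in that theorem coincides with the family of connected $V \subseteq \set{1,\dots,\nvertices}$ satisfying $\constantsimplices{\tu} \cap \paren[auto](){\vertextosimplices{V} \setminus \coveredsimplices{V}} = \emptyset$. Since \cref{theorem:B-stationarity:explicit-description} already gives the equivalence of B-stationarity with the vanishing sums \eqref{eq:B-stationarity:explicit-description} ranging over $V \in \cV(\tu)$, this identification immediately yields the claimed characterization \eqref{eq:B-stationarity:explicit-description:exact-feasibility}.

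For the inclusion of the characterized family into $\cV(\tu)$, I would argue directly: if the intersection $\constantsimplices{\tu} \cap \paren[auto](){\vertextosimplices{V} \setminus \coveredsimplices{V}}$ is empty, then the corresponding sum of weights in the definition of $\cV(\tu)$ is zero, and feasibility $\tu \in F$ gives $\norm{\sumOfSquaredDifferences{\tu}}_{0,\nu} \le K$, so $V \in \cV(\tu)$. No use of exact feasibility is needed for this direction.

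The reverse inclusion is where exact feasibility enters. Suppose $V \in \cV(\tu)$ but the intersection $\constantsimplices{\tu} \cap \paren[auto](){\vertextosimplices{V} \setminus \coveredsimplices{V}}$ contains some index $i_0$. Then the sum in the defining inequality of $\cV(\tu)$ is at least $\nu_{i_0}$, so
\begin{equation*}
    K
    \ge
    \norm{\sumOfSquaredDifferences{\tu}}_{0,\nu}
    +
    \sum_{\mrep[r]{i \in \constantsimplices{\tu} \cap \paren[auto](){\vertextosimplices{V} \setminus \coveredsimplices{V}}}{}} \nu_i
    \ge
    \norm{\sumOfSquaredDifferences{\tu}}_{0,\nu}
    +
    \nu_{i_0},
\end{equation*}
which contradicts the exact feasibility inequality $\norm{\sumOfSquaredDifferences{\tu}}_{0,\nu} + \nu_i > K$ for all $i \in \constantsimplices{\tu}$ applied to $i = i_0$. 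Hence the intersection must be empty.

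Putting the two inclusions together identifies $\cV(\tu)$ with the family in the statement, so \cref{theorem:B-stationarity:explicit-description} yields the claim. There is no real obstacle here; the only point requiring care is that exact feasibility is precisely calibrated so that even a single cell's worth of additional nonconstant behavior would break the cardinality budget, which is exactly what forces the intersection to be empty.
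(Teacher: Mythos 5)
Your proposal is correct and follows essentially the same route as the paper: the paper's (much terser) proof likewise reduces the claim to \cref{theorem:B-stationarity:explicit-description} by observing that exact feasibility forces the sum in the definition of $\cV(\tu)$ to be empty, which is precisely the two-inclusion identification you spell out. Your version merely makes explicit the contradiction with $\norm{\sumOfSquaredDifferences{\tu}}_{0,\nu} + \nu_{i_0} > K$ that the paper leaves implicit.
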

\begin{proof}
	The claim immediately follows from \cref{theorem:B-stationarity:explicit-description} because for exactly feasible~$\tu$, the sum in the definition of $\cV(\tu)$ can only be empty.
\end{proof}

\subsection{Algorithmic Treatment of the Discrete DC Reformulation}
\label{subsection:DC-reformulation:DC-approach}
In this section, we discuss the application of the DC~algorithm to solve the discrete penalized problem \eqref{eq:general-problem:discrete:penalized}, \ie, the problem
\begin{equation}
	\label{eq:general-problem:discrete:penalized:restated-fully}
	\text{Minimize}
	\quad
	f_\rho(\tu)
	\coloneqq
	\paren[auto](){f(\tu)
	+
	\rho \, \norm{\sumOfSquaredDifferences{\tu}}_{1, \nu}}
	-
  \rho\largestKnorm{\sumOfSquaredDifferences{\tu}}{K}[\nu]
	\quad
	\text{where }
	\tu \in \R^\nvertices
	,
\end{equation}
and state the algorithm's convergence properties, given the following assumption, which makes \eqref{eq:general-problem:discrete:penalized:restated-fully} a DC-type problem.
\begin{assumption}[Additional standing assumptions for the discrete setting] \skipline
	\label{assumption:standing-assumptions:finite-dimensions:differentiability-and-convexity}
	In addition to \cref{assumption:standing-assumptions:finite-dimensions:differentiability}, the objective function~$f$ is assumed to be convex.
\end{assumption}
Note that $f$ is automatically bounded from below by Weierstraß' theorem; see, \eg, \cite[Theorem~1.14]{DharaDutta:2011:1}.

Analogously to \cref{theorem:general-problem:penalized:convergence} in the infinite-dimensional setting we have the following result on the finite-dimensional penalty method:
\begin{theorem}[Convergence of global minimizers] \skipline
	\label{theorem:general-problem:discrete:penalized:convergence}
	Suppose that $\sequence{\rho}{k} > 0$ is a sequence such that $\lim_{k \to \infty} \sequence{\rho}{k} = \infty$.
	Let $\sequence{\tu}{k}$ be a global solution of the penalized problem \eqref{eq:general-problem:discrete:penalized:restated-fully} with $\rho = \sequence{\rho}{k}$ for all $k \in \N$.
	Then the sequence $\sequence[big](){\tu}{k}$ is bounded in $\R^\nvertices$, satisfies $\lim_{k \to \infty} \phi(\sequence{\tu}{k}) = 0$, and every accumulation point $\overline{\tu}$ is a global solution of \eqref{eq:general-problem:discrete}.
\end{theorem}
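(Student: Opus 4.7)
The plan is to follow the classical penalty-method convergence pattern, mirroring the continuous-setting result \cref{theorem:general-problem:penalized:convergence} while exploiting the substantial simplifications afforded by working in $\R^\nvertices$. First I would fix a reference point $\tilde{\tu}$ that is feasible for \eqref{eq:general-problem:discrete} and satisfies $f(\tilde{\tu}) < \infty$: the constant-zero vector $\tnull$ lies in the feasible set $F$ since $\sumOfSquaredDifferences{\tnull} = \tnull$, and if $f(\tnull) = \infty$, properness of $f$ together with the solvability established in \cref{theorem:general-problem:discrete:existence} provides another feasible point with finite objective value. Since \cref{theorem:L0-norm-gradient-constraint} gives $\phi(\tilde{\tu}) = 0$, global optimality of $\sequence{\tu}{k}$ for $f_{\sequence{\rho}{k}}$ yields the fundamental inequality
\begin{equation*}
	f(\sequence{\tu}{k}) + \sequence{\rho}{k}\, \phi(\sequence{\tu}{k})
	\;\le\;
	f(\tilde{\tu}).
\end{equation*}

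From here I would extract boundedness and $\phi(\sequence{\tu}{k}) \to 0$ simultaneously. Since $\largestKnorm{\cdot}{K}[\nu] \le \norm{\cdot}_{1,\nu}$ we have $\phi \ge 0$; combining this with the lower bound on $f$ (automatic by Weierstraß, as noted right after \cref{assumption:standing-assumptions:finite-dimensions:differentiability-and-convexity}) produces a uniform estimate $\sequence{\rho}{k}\, \phi(\sequence{\tu}{k}) \le C$, and dividing by $\sequence{\rho}{k} \to \infty$ yields $\phi(\sequence{\tu}{k}) \to 0$. The same inequality also gives $f(\sequence{\tu}{k}) \le f(\tilde{\tu})$, so radial unboundedness of $f$ forces $\sequence[big](){\tu}{k}$ to be bounded in $\R^\nvertices$.

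Finally, let $\overline{\tu}$ be an arbitrary accumulation point and pass to a subsequence $\sequence{\tu}{k_j} \to \overline{\tu}$ by Bolzano-Weierstraß. The map $\sumOfSquaredDifferences{\cdot}$ is a polynomial, and both $\norm{\cdot}_{1,\nu}$ and $\largestKnorm{\cdot}{K}[\nu]$ are continuous on $\R^\nsimplices$, so $\phi$ is continuous on $\R^\nvertices$; therefore $\phi(\overline{\tu}) = \lim_{j \to \infty} \phi(\sequence{\tu}{k_j}) = 0$, and \cref{theorem:L0-norm-gradient-constraint} certifies that $\overline{\tu}$ is feasible for \eqref{eq:general-problem:discrete}. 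To upgrade feasibility to global optimality, take any other feasible $\hat{\tu}$, so that $\phi(\hat{\tu}) = 0$; the penalized-problem optimality of $\sequence{\tu}{k_j}$ then gives
\begin{equation*}
	f(\sequence{\tu}{k_j})
	\;\le\;
	f(\sequence{\tu}{k_j}) + \sequence{\rho}{k_j}\,\phi(\sequence{\tu}{k_j})
	\;\le\;
	f(\hat{\tu}),
\end{equation*}
and lower semicontinuity of $f$ finishes the argument via $f(\overline{\tu}) \le \liminf_{j \to \infty} f(\sequence{\tu}{k_j}) \le f(\hat{\tu})$.

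I do not foresee a substantive obstacle: the main analytical ingredient in the continuous case, namely weak sequential continuity of $\phi$ established in \cref{lemma:weak-continuity-of-phi}, is here replaced by the pointwise continuity of a polynomial composed with two continuous norms, and Bolzano-Weierstraß obviates any topological subtleties associated with passing to subsequences. The only minor care required is picking a reference point with finite objective value, which is guaranteed by properness of $f$ and the feasibility of $\tnull$.
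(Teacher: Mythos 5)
Your proof is correct and is precisely the standard penalty-method argument that the paper invokes only by analogy (the discrete theorem is stated without its own proof, pointing to the continuous counterpart \cref{theorem:general-problem:penalized:convergence}, which itself defers to the cited reference); your finite-dimensional writeup, replacing weak sequential continuity of $\phi$ by ordinary continuity and Bolzano--Weierstraß, fills in exactly what the paper leaves implicit. The one point worth flagging is the need for a feasible point with finite objective value, which the paper only assumes implicitly (in the properness claim for $f + \characteristicFunction{F}$ in \cref{theorem:general-problem:discrete:existence}) and which you handle appropriately.
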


In each step of the DC~algorithm, we choose a subgradient $\sequence{\ts}{k} \in \partial \largestKnorm{\sumOfSquaredDifferences{\cdot}}{K}[\nu] (\sequence{\tu}{k})$ and determine the next iterate $\sequence{\tu}{k+1}$ by solving the convex problem
\begin{equation}
	\label{eq:general-problem:discrete:DC-subproblem}
	\text{Minimize}
	\quad
	f(\tu)
	+
	\rho \, \norm{\sumOfSquaredDifferences{\tu}}_{1,\nu}
	-
	\rho \, \sequence[big](){\ts}{k}^\transp \tu
	\quad
	\text{where }
	\tu \in \R^\nvertices
	.
\end{equation}

In this section, we will work out the specific steps of this approach as well as some results on the behavior.
First, we need to determine a way to find a subgradient $\sequence{\ts}{k}$.
\begin{lemma}
	\label{lemma:norm-of-sum-of-squared-differences-via-matrices}
	Let $\tu \in \R^\nvertices$, $K \in \interval[][]{0}{\mu(\Omega)}$ and $I \subseteq \set{1, \ldots, \nsimplices}$ be such that the maximum in $\largestKnorm{\sumOfSquaredDifferences{\tu}}{K}[\nu]$ is attained.
	Then we have that
	\begin{equation*}
		\norm{\sumOfSquaredDifferences{\tu}}_{1,\nu}
		=
		\tu^\transp \tM \tu
		,
		\qquad
		\largestKnorm{\sumOfSquaredDifferences{\tu}}{K}[\nu]
		=
		\tu^\transp \tM_I \tu
		,
	\end{equation*}
	for the symmetric and positive semidefinite matrices $\tM \coloneqq \tD_1^\transp \diag \paren[auto](){\tD_2^\transp \nu} \tD_1$ and $\tM_I \coloneqq \tD_1^\transp \diag\paren[auto](){\tD_2^\transp \indicatorFunction{I} \nu} \, \tD_1$ in $\R^{\nvertices \times \nvertices}$.
	Here $\indicatorFunction{I} \nu \in \R^\nsimplices$ is defined componentwise via $(\indicatorFunction{I} \nu)_i \coloneqq \indicatorFunction{I}(i) \, \nu_i$ for the indicator function $\indicatorFunction{I}$ of $I$ with values in $\set{0,1}$.
\end{lemma}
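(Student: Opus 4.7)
The plan is to unfold the definitions of the two (semi-)norms and then reshape them into quadratic forms in $\tu$ by moving the weights inside the appropriate diagonal factor. Since $\sumOfSquaredDifferences{\tu}_i = \sum_{j \in \simplextoedges{i}} (\tD_1 \tu)_j^2 \ge 0$ by construction \eqref{eq:sum-of-squared-differences}, all absolute values in $\norm{\cdot}_{1,\nu}$ and $\largestKnorm{\cdot}{K}[\nu]$ drop out, and $\largestKnorm{\sumOfSquaredDifferences{\tu}}{K}[\nu] = \sum_{i \in I} \nu_i \, \sumOfSquaredDifferences{\tu}_i$ for the maximizing index set $I$.

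First I would rewrite
\begin{equation*}
	\norm{\sumOfSquaredDifferences{\tu}}_{1,\nu}
	=
	\nu^\transp \sumOfSquaredDifferences{\tu}
	=
	\nu^\transp \tD_2 \paren[big][]{(\tD_1 \tu) \odot (\tD_1 \tu)}
	=
	(\tD_2^\transp \nu)^\transp \paren[big][]{(\tD_1 \tu) \odot (\tD_1 \tu)}
	.
\end{equation*}
The key elementary identity is that for any $w \in \R^\nedges$ and $y \in \R^\nedges$,
\begin{equation*}
	w^\transp (y \odot y)
	=
	\sum_{j=1}^\nedges w_j \, y_j^2
	=
	y^\transp \diag(w) \, y
	.
\end{equation*}
Applying this with $y = \tD_1 \tu$ and $w = \tD_2^\transp \nu$ yields the first identity $\norm{\sumOfSquaredDifferences{\tu}}_{1,\nu} = \tu^\transp \tM \tu$ with $\tM = \tD_1^\transp \diag(\tD_2^\transp \nu) \tD_1$. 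For the second identity, the same steps go through with $\nu$ replaced by $\indicatorFunction{I} \nu$ (componentwise), since in the optimal $I$ only those cells contribute to $\largestKnorm{\sumOfSquaredDifferences{\tu}}{K}[\nu]$, giving $\tu^\transp \tM_I \tu$ with $\tM_I = \tD_1^\transp \diag(\tD_2^\transp \indicatorFunction{I} \nu) \tD_1$.

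Symmetry of $\tM$ and $\tM_I$ is immediate because both have the form $\tD_1^\transp \diag(w) \tD_1$ with $\diag(w)$ symmetric. Positive semidefiniteness follows from observing that $\nu > 0$ and the entries of $\tD_2$ lie in $\{0,1\}$, so $\tD_2^\transp \nu \ge 0$ componentwise; likewise $\tD_2^\transp \indicatorFunction{I} \nu \ge 0$. Consequently $\diag(\tD_2^\transp \nu)$ and $\diag(\tD_2^\transp \indicatorFunction{I} \nu)$ are symmetric PSD, and congruence with $\tD_1$ preserves positive semidefiniteness.

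There is essentially no hard step here — the result is a bookkeeping lemma that records how the weighted sums over cells translate into quadratic forms in the vertex coefficients. The only point that deserves explicit mention is the observation $\sumOfSquaredDifferences{\tu} \ge 0$ componentwise, which lets us drop absolute values and write the largest-$K$-norm as a plain linear combination over the optimal index set $I$; after that the rearrangement into $\tD_1^\transp \diag(\cdot) \tD_1$ is mechanical.
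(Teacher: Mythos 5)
Your proof is correct and follows essentially the same computation as the paper: both unfold the weighted sum over cells, swap the order of summation (your $w^\transp(y\odot y)=y^\transp\diag(w)\,y$ identity is exactly the paper's index manipulation), and read off the quadratic form. The only cosmetic differences are that the paper proves the largest-$K$-norm identity first and recovers the $\ell_1$-case by taking $K=\mu(\Omega)$, and it deduces positive semidefiniteness from the nonnegativity of the quadratic form's values rather than, as you do, from the componentwise nonnegativity of $\tD_2^\transp\nu$ — your version is if anything slightly more direct.
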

\begin{proof}
	To see the right-hand identity, note that
	\makeatletter
	\ltx@ifclassloaded{siamart250106}{%
		\begin{equation*}
			\begin{aligned}
				\largestKnorm{\sumOfSquaredDifferences{\tu}}{K}[\nu]
				&
				=
				\sum_{i \in I} \nu_i (\tD_2 \paren[big][]{(\tD_1 \tu) \odot (\tD_1 \tu)})_i
				=
				\sum_{i=1}^\nsimplices \indicatorFunction{I}(i) \, \nu_i \sum_{j=1}^\nedges (\tD_2)_{i,j} (\tD_1 \tu)_j^2
				\\
				&
				=
				\sum_{j=1}^\nedges (\tD_1 \tu)_j^2 \sum_{i = 1}^\nsimplices (\tD_2)_{i,j} \indicatorFunction{I}(i) \, \nu_i
				=
				\sum_{j=1}^\nedges (\tD_1 \tu)_j^2 \, (\tD_2^\transp \indicatorFunction{I} \nu)_j
				\\
				&
				=
				\tu^\transp \tM_I \tu
				.
			\end{aligned}
		\end{equation*}
	}{%
		\begin{equation*}
			\begin{aligned}
				\largestKnorm{\sumOfSquaredDifferences{\tu}}{K}[\nu]
				&
				=
				\sum_{i \in I} \nu_i (\tD_2 \paren[big][]{(\tD_1 \tu) \odot (\tD_1 \tu)})_i
				\\
				&
				=
				\sum_{i=1}^\nsimplices \indicatorFunction{I}(i) \, \nu_i \sum_{j=1}^\nedges (\tD_2)_{i,j} (\tD_1 \tu)_j^2
				\\
				&
				=
				\sum_{j=1}^\nedges (\tD_1 \tu)_j^2 \sum_{i = 1}^\nsimplices (\tD_2)_{i,j} \indicatorFunction{I}(i) \, \nu_i
				\\
				&
				=
				\sum_{j=1}^\nedges (\tD_1 \tu)_j^2 \, (\tD_2^\transp \indicatorFunction{I} \nu)_j
				\\
				&
				=
				\tu^\transp \tM_I \tu
				.
			\end{aligned}
		\end{equation*}
	}
	\makeatother
	By definition, $\tM_I$ is symmetric.
	Moreover, its positive semidefiniteness follows from $\largestKnorm{\sumOfSquaredDifferences{\tu}}{K}[\nu] \ge 0$ for all $\tu \in \R^\nvertices$, showing the right-hand identity for arbitrary $K \in \interval[][]{0}{\mu(\Omega)}$ and corresponding~$I$.
	The left-hand claim for $\norm{\sumOfSquaredDifferences{\tu}}_{1,\nu}$ and $\tM$ follows from the previous result by setting $K=\mu(\Omega)$ with corresponding $I = \set{1, \ldots, \nsimplices}$.
\end{proof}

With the previous result, we can provide a full characterization of the subdifferential in question.
\begin{theorem}[\protect{Characterization of the subdifferential $\partial \largestKnorm{\sumOfSquaredDifferences{\cdot}}{K}[\nu]$}] \skipline
	\label{theorem:subdifferential-of-the-largest-K-norm-of-the-sum-of-squared-differences}
	Let $\tu \in \R^\nvertices$.
	Then we have
	\begin{multline*}
		\partial \largestKnorm{\sumOfSquaredDifferences{\cdot}}{K}[\nu](\tu)
		\\
		=
		\conv
			\setDef[Big]
			{2 \, \tM_I \tu}
			{%
				I \subseteq \set{1, \ldots, \nsimplices}
				,
				\;
				\sum_{i \in I} \nu_i \le K
				,
				\;
				\sum_{i \in I} \nu_i \, \sumOfSquaredDifferences{\tu}_i = \largestKnorm{\sumOfSquaredDifferences{\tu}}{K}[\nu]
			}
			.
	\end{multline*}
	Moreover,
	\begin{equation}
		\label{eq:subdifferential-of-the-largest-K-norm-of-the-sum-of-squared-differences:choice}
		2 \, \tD_1^\transp \paren[auto](){\tD_2^\transp \tr \odot \tD_1 \tu}
		\in
		\partial \largestKnorm{\sumOfSquaredDifferences{\cdot}}{K}[\nu](\tu)
	\end{equation}
	holds for any $\tr \in \partial \largestKnorm{\cdot}{K}[\nu](\sumOfSquaredDifferences{\tu})$, where $\odot$ still denotes componentwise multiplication.
\end{theorem}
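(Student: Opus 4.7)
The plan is to exploit \cref{lemma:norm-of-sum-of-squared-differences-via-matrices} and rewrite $\largestKnorm{\sumOfSquaredDifferences{\cdot}}{K}[\nu]$ as a pointwise maximum of finitely many convex quadratics, apply the max-rule for convex subdifferentials, and then obtain the explicit element via a chain rule.

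\textbf{Step 1: Maximum representation.} The proof of \cref{lemma:norm-of-sum-of-squared-differences-via-matrices} shows, for \emph{any} index set $I \subseteq \set{1,\ldots,\nsimplices}$, the identity $\tu^\transp \tM_I \tu = \sum_{i \in I} \nu_i \, \sumOfSquaredDifferences{\tu}_i$. Taking the maximum over all $I$ with $\sum_{i\in I}\nu_i \le K$ yields
\begin{equation*}
	\largestKnorm{\sumOfSquaredDifferences{\tu}}{K}[\nu]
	=
	\max \setDef[Big]{\tu^\transp \tM_I \tu}{I \subseteq \set{1,\ldots,\nsimplices}, \; \sum_{i\in I}\nu_i \le K}
	.
\end{equation*}
Since there are only finitely many admissible $I$ and each $\tM_I$ is symmetric and positive semidefinite, this expresses $\largestKnorm{\sumOfSquaredDifferences{\cdot}}{K}[\nu]$ as a max of finitely many smooth convex functions, which, in particular, confirms that it is convex.

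\textbf{Step 2: Max-rule and convex-hull characterization.} I would then invoke the standard max-rule (e.g., \cite[Theorem~23.8]{Rockafellar:1970:1}-type result; alternatively Danskin's theorem): for a finite pointwise maximum of smooth convex functions, the convex subdifferential at $\tu$ is the convex hull of the gradients of the active functions. Here the gradient of $\tu \mapsto \tu^\transp \tM_I \tu$ is $2\tM_I \tu$, and the active set consists of those admissible $I$ for which $\tu^\transp \tM_I \tu = \largestKnorm{\sumOfSquaredDifferences{\tu}}{K}[\nu]$, i.e. $\sum_{i\in I}\nu_i\,\sumOfSquaredDifferences{\tu}_i = \largestKnorm{\sumOfSquaredDifferences{\tu}}{K}[\nu]$. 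This gives the convex-hull expression claimed in the statement.

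\textbf{Step 3: Explicit element via chain rule.} Writing $F(\tu) \coloneqq \sumOfSquaredDifferences{\tu} = \tD_2\paren[big][]{(\tD_1 \tu)\odot(\tD_1\tu)}$, a direct computation yields
\begin{equation*}
	F'(\tu)^\transp \tr
	=
	2\,\tD_1^\transp\paren[auto](){\tD_2^\transp \tr \odot \tD_1 \tu}
	,
\end{equation*}
since $\dual{\tr}{F'(\tu)\td} = 2\,\tr^\transp \tD_2\paren[big][]{(\tD_1\tu)\odot(\tD_1\td)} = 2\,(\tD_2^\transp\tr\odot\tD_1\tu)^\transp \tD_1\td$. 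Because $\largestKnorm{\cdot}{K}[\nu]$ is convex and Lipschitz and $F$ is continuously differentiable with the composition $\largestKnorm{\cdot}{K}[\nu]\circ F$ already shown to be convex in Step 1, a chain rule applies: using $(g\circ F)'(\tu;\td) = g'(F(\tu);F'(\tu)\td) \ge \dual{\tr}{F'(\tu)\td} = \dual{F'(\tu)^\transp\tr}{\td}$ for any $\tr\in\partial g(F(\tu))$, together with the subdifferential characterization via directional derivatives for convex functions, yields $F'(\tu)^\transp \tr \in \partial\paren[big](){g\circ F}(\tu)$. Applied with $g = \largestKnorm{\cdot}{K}[\nu]$, this proves \eqref{eq:subdifferential-of-the-largest-K-norm-of-the-sum-of-squared-differences:choice}.

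\textbf{Main obstacle.} The computational parts (Steps 1 and 3) are straightforward bookkeeping. The only genuinely delicate point is the application of the chain rule in Step 3: $F$ is not convex, so neither the standard convex chain rule nor the Clarke chain rule applies out of the box. The workaround is to exploit that $g\circ F$ \emph{is} convex (by Step 1) and to translate the chain-rule inequality at the level of directional derivatives into a subgradient inclusion through the identity $(g\circ F)'(\tu;\cdot) = \sup_{s\in\partial(g\circ F)(\tu)}\dual{s}{\cdot}$ available for convex functions.
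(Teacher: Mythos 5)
Your proof is correct and follows essentially the same route as the paper: the representation $\largestKnorm{\sumOfSquaredDifferences{\tu}}{K}[\nu] = \max_I \tu^\transp \tM_I \tu$ from \cref{lemma:norm-of-sum-of-squared-differences-via-matrices} combined with the max-rule for finite maxima of smooth convex functions yields the convex-hull formula, and a chain rule together with the computation $F'(\tu)^\transp \tr = 2\,\tD_1^\transp(\tD_2^\transp \tr \odot \tD_1\tu)$ yields the explicit subgradient. Your additional care in Step~3 --- justifying the subgradient inclusion through the directional-derivative characterization because the inner map is neither linear nor convex --- is a sound and slightly more explicit version of the paper's bare appeal to a convex chain rule, but it does not constitute a different argument.
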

\begin{proof}
	By \cref{lemma:norm-of-sum-of-squared-differences-via-matrices}, we have
	\begin{equation*}
		\largestKnorm{\sumOfSquaredDifferences{\tu}}{K}[\nu]
		=
		\max \setDef[Big]{\tu^\transp \tM_I \tu}{I \subseteq \set{1, \ldots, \nsimplices} \text{ \st\ } \smash{\sum_{i \in I}} \nu_i \le K}
	\end{equation*}
	with $\tM_I$ symmetric and positive semidefinite.
	Hence, $\largestKnorm{\sumOfSquaredDifferences{\tu}}{K}[\nu]$ is the maximum of a finite number of real, convex and continuously differentiable functions, which by \cite[Corollary~4.4.4]{HiriartUrrutyLemarechal:1993:1} shows the first claim.

	The second claim also follows from the chain rule for the convex subdifferential, see \cite[Theorem~13.23]{ClasonValkonen:2025:1}, applied to $\largestKnorm{\cdot}{K}[\nu]$ and $\sumOfSquaredDifferences{\cdot}$, where by definition
	\makeatletter
	\ltx@ifclassloaded{siamart250106}{%
		\begin{align*}
			\nabla \sumOfSquaredDifferences{\tu}^\transp \tr
			&
			=
			\paren[auto](){\nabla \tD_2 \paren[big][]{(\tD_1 \tu) \odot (\tD_1 \tu)}}^\transp \tr
			=
			\paren[auto](){2 \, \tD_2 \paren[big][]{\diag(\tD_1 \tu) \, \tD_1}}^\transp \tr
			\\
			&
			=
			2 \, \tD_1^\transp \diag(\tD_1 \tu) \, \tD_2^\transp \tr
			=
			2 \, \tD_1^\transp \paren[auto](){\tD_2^\transp \tr \odot  (\tD_1 \tu) }
		\end{align*}
	}{%
		\begin{align*}
			\nabla \sumOfSquaredDifferences{\tu}^\transp \tr
			&
			=
			\paren[auto](){\nabla \tD_2 \paren[big][]{(\tD_1 \tu) \odot (\tD_1 \tu)}}^\transp \tr
			\\
			&
			=
			\paren[auto](){2 \, \tD_2 \paren[big][]{\diag(\tD_1 \tu) \, \tD_1}}^\transp \tr
			\\
			&
			=
			2 \, \tD_1^\transp \diag(\tD_1 \tu) \, \tD_2^\transp \tr
			\\
			&
			=
			2 \, \tD_1^\transp \paren[auto](){\tD_2^\transp \tr \odot  (\tD_1 \tu) }
		\end{align*}
	}
	\makeatother
	holds for every $\tr \in \partial \largestKnorm{\cdot}{K}[\nu](\sumOfSquaredDifferences{\tu})$.
\end{proof}

Next, the convex subproblem \eqref{eq:general-problem:discrete:DC-subproblem} of the DC~algorithm has to be solved.
By \cref{lemma:norm-of-sum-of-squared-differences-via-matrices}, problem \eqref{eq:general-problem:discrete:DC-subproblem} is equivalent to
\begin{equation*}
	\text{Minimize}
	\quad
	f(\tu)
	+
	\rho \, \tu^\transp \tM \tu
	-
	\rho \, \sequence[big](){\ts}{k}^\transp \tu
	\quad
	\text{where }
	\tu \in \R^\nvertices
	.
\end{equation*}
Due to convexity, the solutions are fully characterized by the optimality condition
\begin{equation}
	\label{eq:DC-subproblem:optimality-conditions}
	\nabla f(\tu)
	+
	2 \, \rho \, \tM \tu
	-
	\rho \, \sequence{\ts}{k}
	=
	0
	.
\end{equation}
If $f$ is, for instance, a quadratic polynomial $f(\tu) = \frac{1}{2} \tu^\transp A \tu - \tb^\transp \tu$ with $A$ symmetric and positive definite, then the above equation has the unique solution $(A + 2 \, \rho \, \tM)^{-1} (\tb + \rho \, \sequence{\ts}{k})$.

The specific DC~algorithm for solving \eqref{eq:general-problem:discrete:penalized:restated-fully} is hence given by \cref{algorithm:DC-algorithm}.
Note that there is some flexibility in choosing a subgradient $\sequence{\tr}{k} \in \partial \largestKnorm{\cdot}{K}[\nu](\sumOfSquaredDifferences{\sequence{\tu}{k}})$ in \cref{algorithm:DC-algorithm:determine-subgradient}.
For example, one can choose an index set $\sequence{I}{k} \subseteq \set{1, \ldots, \nsimplices}$ such that $\sum_{i \in \sequence{I}{k}} \nu_i \le K$ and $\sum_{i \in \sequence{I}{k}} \nu_i \sumOfSquaredDifferences{\sequence{\tu}{k}}_i = \largestKnorm{\sumOfSquaredDifferences{\sequence{\tu}{k}}}{K}[\nu]$ is satisfied, and set
\begin{equation}
	\label{eq:choice_of_subgradient}
	\sequence{\tr_i}{k}
	\coloneqq
	\indicatorFunction{\sequence{I}{k}} \nu
	=
	\begin{cases}
		\nu_i
		,
		&
		i \in \sequence{I}{k}
		\\
		0
		,
		&
		i \in \set{1, \ldots, \nsimplices} \setminus \sequence{I}{k}
		.
	\end{cases}
\end{equation}
Note that this results in $\sequence{\ts}{k} = 2 \, \tM_{\sequence{I}{k}} \sequence{\tu}{k}$.

\begin{algorithm}[htp]
	\caption{DC algorithm for solving the discrete penalized problem \eqref{eq:general-problem:discrete:penalized:restated-fully}.}
	\label{algorithm:DC-algorithm}
	\begin{algorithmic}[1]
		\Require
		$\sequence{\tu}{0} \in \R^\nvertices$, $\rho >0$

		\Ensure
		Approximate solution $\sequence{\tu}{k}$ to \eqref{eq:general-problem:discrete:penalized:restated-fully}

		\State
		Set $k \gets 0$

		\While{Termination criterion not satisfied by $\sequence{\tu}{k}$}

		\State
		\label{algorithm:DC-algorithm:determine-subgradient}%
		Determine $\sequence{\tr}{k} \in \partial \largestKnorm{\cdot}{K}[\nu](\sumOfSquaredDifferences{\sequence{\tu}{k}})$.

		\State
		\label{algorithm:DC-algorithm:solve-subproblem}%
		Set $\sequence{\ts}{k} \coloneqq 2 \, \tD_1^\transp \paren[auto](){\tD_2^\transp \sequence{\tr}{k} \odot \tD_1 \sequence{\tu}{k}}$ and determine $\sequence{\tu}{k+1}$ by solving
		\begin{equation*}
			\nabla f(\tu) + 2 \, \rho \, \tM \tu
			=
			\rho \, \sequence{\ts}{k}
			.
		\end{equation*}

		\State
		Set $k \leftarrow k+1$

		\EndWhile
	\end{algorithmic}
\end{algorithm}

We move on to the DC algorithm's well-known convergence properties.
Recall \cref{definition:critical-point} of critical points, which carries over to the case where $U = \R^n$.
We can characterize critical points as follows.
\begin{lemma}[Characterization of critical points of the penalized problem \eqref{eq:general-problem:discrete:penalized:restated-fully}]
	\label{lemma:general-problem:discrete:penalized:multiplier}
	A point $\tu$ is a critical point of $f_\rho$ \wrt the natural DC decomposition in \eqref{eq:general-problem:discrete:penalized:restated-fully} if and only if
	\begin{equation*}
		\nabla f(\tu)
		+
		2 \, \rho \, \tM \tu
		\in
		\rho \, \partial \largestKnorm{\sumOfSquaredDifferences{\cdot}}{K}[\nu](\tu)
		,
	\end{equation*}
	\ie, if and only if there exists $\lambda \in 2 \, \rho \, \tM \tu - \rho \, \partial \largestKnorm{\sumOfSquaredDifferences{\cdot}}{K}[\nu](\tu)$
	such that
	\begin{equation}
		\nabla f(\tu) + \lambda
		=
		0
		.
		\label{eq:general-problem:discrete:penalized:multiplier}
	\end{equation}
	Furthermore, given a critical point $\tu$, the following are equivalent:
	\begin{enumerate}
		\item \label[statement]{item:general-problem:discrete:penalized:multiplier:1}
			$\tu$ is strongly critical.

		\item \label[statement]{item:general-problem:discrete:penalized:multiplier:2}
			$\partial \largestKnorm{\sumOfSquaredDifferences{\cdot}}{K}[\nu](\tu)$ is a singleton.
	\end{enumerate}
	In this case, for any index set $I \subseteq \set{1, \dots, \nsimplices}$ that realizes the maximum in $\largestKnorm{\sumOfSquaredDifferences{\tu}}{K}[\nu]$, the vector~$\lambda$ associated with $\tu$ is given as $\lambda = 2 \, \rho \, (\tM - \tM_I) \, \tu$.
\end{lemma}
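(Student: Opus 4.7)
My plan is to exploit the fact that the natural DC decomposition of $f_\rho$ has a very asymmetric structure: one part is smooth while the other is only subdifferentiable, so one of the subdifferentials is already a singleton.

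Concretely, write $f_\rho = g - h$ with
\begin{equation*}
	g(\tu) \coloneqq f(\tu) + \rho \, \norm{\sumOfSquaredDifferences{\tu}}_{1,\nu},
	\qquad
	h(\tu) \coloneqq \rho \, \largestKnorm{\sumOfSquaredDifferences{\tu}}{K}[\nu].
\end{equation*}
By \cref{lemma:norm-of-sum-of-squared-differences-via-matrices}, $\norm{\sumOfSquaredDifferences{\tu}}_{1,\nu} = \tu^\transp \tM \tu$ is a convex quadratic in~$\tu$, so together with the smoothness of $f$ (\cref{assumption:standing-assumptions:finite-dimensions:differentiability}) and convexity (\cref{assumption:standing-assumptions:finite-dimensions:differentiability-and-convexity}) we obtain that $g$ is convex and Fréchet-differentiable with
\begin{equation*}
	\partial g(\tu) = \set{\nabla f(\tu) + 2 \, \rho \, \tM \tu}.
\end{equation*}

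From \cref{definition:critical-point}, $\tu$ is a critical point iff $\partial g(\tu) \cap \partial h(\tu) \neq \emptyset$. Since $\partial g(\tu)$ is a singleton, this reduces to
\begin{equation*}
	\nabla f(\tu) + 2 \, \rho \, \tM \tu \in \partial h(\tu) = \rho \, \partial \largestKnorm{\sumOfSquaredDifferences{\cdot}}{K}[\nu](\tu),
\end{equation*}
which is the first claimed inclusion. Setting $\lambda \coloneqq 2 \, \rho \, \tM \tu - \rho \, s$ for any subgradient $s \in \partial \largestKnorm{\sumOfSquaredDifferences{\cdot}}{K}[\nu](\tu)$ realizing the inclusion immediately yields the identity $\nabla f(\tu) + \lambda = 0$ in \eqref{eq:general-problem:discrete:penalized:multiplier}.

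For the equivalence of \cref{item:general-problem:discrete:penalized:multiplier:1} and \cref{item:general-problem:discrete:penalized:multiplier:2}, the direction \cref{item:general-problem:discrete:penalized:multiplier:1}$\Rightarrow$\cref{item:general-problem:discrete:penalized:multiplier:2} is immediate: strong criticality requires $\emptyset \neq \partial h(\tu) \subseteq \partial g(\tu)$, and since $\partial g(\tu)$ has exactly one element, $\partial h(\tu)$ must coincide with it and therefore also be a singleton (and $\rho \, \partial \largestKnorm{\sumOfSquaredDifferences{\cdot}}{K}[\nu](\tu)$ as well, since $\rho>0$). Conversely, if $\tu$ is already critical and $\partial \largestKnorm{\sumOfSquaredDifferences{\cdot}}{K}[\nu](\tu)$ is a singleton, then $\partial h(\tu)$ is a nonempty singleton contained in $\partial g(\tu)$ (by criticality), establishing strong criticality.

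Finally, for the explicit form of $\lambda$, I invoke \cref{theorem:subdifferential-of-the-largest-K-norm-of-the-sum-of-squared-differences}: every element of $\partial \largestKnorm{\sumOfSquaredDifferences{\cdot}}{K}[\nu](\tu)$ is a convex combination of the vectors $2 \, \tM_I \tu$ for index sets $I$ realizing the maximum in $\largestKnorm{\sumOfSquaredDifferences{\tu}}{K}[\nu]$. If the subdifferential is a singleton, then in particular $2 \, \tM_I \tu$ must agree for every such $I$ and equal that unique element. Picking any realizing $I$, the corresponding $s = 2 \, \tM_I \tu$ yields
\begin{equation*}
	\lambda = 2 \, \rho \, \tM \tu - \rho \, s = 2 \, \rho \, (\tM - \tM_I) \, \tu,
\end{equation*}
as claimed. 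The only mildly subtle point is this last step, where one must recognize that the singleton property forces the vectors $2 \, \tM_I \tu$ indexed by all maximizing $I$ to coincide, so the choice of $I$ in the representation is immaterial; beyond that, everything is a direct unpacking of the definitions combined with the previously established subdifferential characterization.
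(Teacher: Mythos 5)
Your proposal is correct and follows essentially the same route as the paper's proof: the same decomposition $g = f + \rho\,\norm{\sumOfSquaredDifferences{\cdot}}_{1,\nu}$, $h = \rho\,\largestKnorm{\sumOfSquaredDifferences{\cdot}}{K}[\nu]$, the observation that $\partial g(\tu)$ is the singleton $\set{\nabla f(\tu) + 2\,\rho\,\tM\tu}$, the same singleton argument for the equivalence of strong criticality with \cref{item:general-problem:discrete:penalized:multiplier:2}, and the same use of \cref{theorem:subdifferential-of-the-largest-K-norm-of-the-sum-of-squared-differences} to force $2\,\tM_I\tu$ to coincide across all maximizing index sets~$I$. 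No gaps; the argument matches the paper's in both structure and detail.
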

\begin{proof}
	We apply \cref{definition:critical-point} to problem \eqref{eq:general-problem:discrete:penalized:restated-fully} with
	\begin{align*}
		g(\tu)
		&
		=
		f(\tu) + \rho \, \norm{\sumOfSquaredDifferences{\tu}}_{1, \nu}
		=
		f(\tu) + \rho \, \tu^\transp \tM \tu
		,
		\\
		h(\tu)
		&
		=
		\rho \, \largestKnorm{\sumOfSquaredDifferences{\tu}}{K}[\nu]
		.
	\end{align*}
	Due to the differentiability of~$f$, and therefore~$g$, we have that $\partial g(\tu) = \set{\nabla f(\tu) + 2 \, \rho \, \tM \tu}$, so the claim for criticality follows.

	To show the equivalence of \cref{item:general-problem:discrete:penalized:multiplier:1,item:general-problem:discrete:penalized:multiplier:2}, let $\tu$ be a critical point.
	Strong criticality of~$\tu$ implies that $\partial h(\tu) = \rho \, \partial \largestKnorm{\sumOfSquaredDifferences{\cdot}}{K}[\nu](\tu)$ is a singleton, as $\partial g(\tu)$ is a singleton.
	On the other hand, if the subdifferential $\partial h(\tu)$ is a singleton and has nonempty intersection with the singleton $\partial g(\tu)$, then they coincide, which shows strong criticality of~$\tu$.

	To verify the form of the associated vector~$\lambda$, note that, using \cref{theorem:subdifferential-of-the-largest-K-norm-of-the-sum-of-squared-differences}, we can conclude that
	\begin{multline*}
		\set{\nabla f(\tu) + 2 \, \rho \, \tM \tu}
		=
		\partial g(\tu)
		=
		\partial h(\tu)
		=
		\rho \, \partial \largestKnorm{\sumOfSquaredDifferences{\cdot}}{K}[\nu](\tu)
		\\
		=
		\rho \,
		\conv
		\setDef[Big]
		{2 \, \tM_I \tu}
		{%
			I \subseteq \set{1, \ldots, \nsimplices}
			,
			\;
			\smash{\sum_{i \in I}} \nu_i \le K
			,
			\;
			\smash{\sum_{i \in I}} \nu_i \, \sumOfSquaredDifferences{\tu}_i = \largestKnorm{\sumOfSquaredDifferences{\tu}}{K}[\nu]
		}
		.
	\end{multline*}
	This means that $2 \, \tM_I \tu$ is the same element for all relevant index sets~$I$, because otherwise the set would not be a singleton, which yields the claimed expression.
\end{proof}

The general convergence results for the DC~algorithm, see \cite[Theorem~3]{PhamLeThi:1997:1}, imply the following convergence behavior of \cref{algorithm:DC-algorithm} for problem \eqref{eq:general-problem:discrete:penalized:restated-fully}.
\begin{theorem}[Convergence of the \namedref{algorithm:DC-algorithm}{DC Algorithm}] \skipline
	\label{theorem:DC-algorithm:convergence}
	Let $\sequence[big](){\tu}{k}$ and $\sequence[big](){\ts}{k}$ be the sequences generated by the \cref{algorithm:DC-algorithm} for the penalized problem \eqref{eq:general-problem:discrete:penalized:restated-fully}.
	Then:
	\begin{enumerate}
		\item \label[statement]{item:DC-algorithm:convergence:1}
		The sequence $(f_\rho \sequence[big](){\tu}{k})$ of function values is monotonically decreasing.

		\item \label[statement]{item:DC-algorithm:convergence:2}
			If $\sequence{\tu}{k+1} = \sequence{\tu}{k}$, the current iterate $\sequence{\tu}{k}$ is a critical point for \eqref{eq:general-problem:discrete:penalized:restated-fully}.

		\item \label[statement]{item:DC-algorithm:convergence:3}
		If $f$ is strongly convex and $f_\rho(\sequence{\tu}{k+1}) = f_\rho \sequence[big](){\tu}{k}$, it holds $\sequence{\tu}{k+1} = \sequence{\tu}{k}$ and therefore $\sequence{\tu}{k}$ is a critical point for \eqref{eq:general-problem:discrete:penalized:restated-fully}.

		\item \label[statement]{item:DC-algorithm:convergence:4}
		Every accumulation point $\tu$ of $\sequence[big](){\tu}{k}$ is a critical point for \eqref{eq:general-problem:discrete:penalized:restated-fully}.
	\end{enumerate}
\end{theorem}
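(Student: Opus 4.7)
The plan is to identify \cref{algorithm:DC-algorithm} as a specific instance of the classical DC algorithm of Pham Dinh and Le Thi and then invoke \cite[Theorem~3]{PhamLeThi:1997:1}. The natural DC decomposition is $f_\rho = g - h$ with $g(\tu) \coloneqq f(\tu) + \rho \, \tu^\transp \tM \tu$ and $h(\tu) \coloneqq \rho \, \largestKnorm{\sumOfSquaredDifferences{\tu}}{K}[\nu]$. The preliminary step is to check that both $g$ and $h$ are proper, lower semicontinuous and convex: for $g$, this follows from \cref{assumption:standing-assumptions:finite-dimensions:differentiability-and-convexity} together with the positive semidefiniteness of $\tM$ stated in \cref{lemma:norm-of-sum-of-squared-differences-via-matrices}; for $h$, the same lemma gives the representation as a finite maximum over convex quadratic forms $\rho \, \tu^\transp \tM_I \tu$ with positive semidefinite $\tM_I$, from which convexity and continuity are immediate.

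Next I would verify that each iteration of \cref{algorithm:DC-algorithm} performs the canonical DC update. By \cref{theorem:subdifferential-of-the-largest-K-norm-of-the-sum-of-squared-differences}, specifically the inclusion \eqref{eq:subdifferential-of-the-largest-K-norm-of-the-sum-of-squared-differences:choice}, the vector $\sequence{\ts}{k}$ constructed in \cref{algorithm:DC-algorithm:solve-subproblem} satisfies $\sequence{\ts}{k} \in \partial \largestKnorm{\sumOfSquaredDifferences{\cdot}}{K}[\nu](\sequence{\tu}{k})$, so that $\rho \, \sequence{\ts}{k} \in \partial h(\sequence{\tu}{k})$. The linear system $\nabla f(\tu) + 2 \rho \, \tM \tu = \rho \, \sequence{\ts}{k}$ is precisely the first-order optimality condition characterizing any solution $\sequence{\tu}{k+1}$ of the convex subproblem $\min_{\tu} g(\tu) - \rho \, \sequence{\ts}{k}^\transp \tu$. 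This matches the abstract DC iteration $\sequence{\tu}{k+1} \in \partial g^\dagger(\rho \, \sequence{\ts}{k})$ with $\rho \, \sequence{\ts}{k} \in \partial h(\sequence{\tu}{k})$ exactly.

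With the identification in place, items \ref{item:DC-algorithm:convergence:1}, \ref{item:DC-algorithm:convergence:2} and \ref{item:DC-algorithm:convergence:4} are direct consequences of \cite[Theorem~3]{PhamLeThi:1997:1}. Item \ref{item:DC-algorithm:convergence:3} requires one additional observation: strong convexity of $f$ implies strong convexity of $g$ (since $\tM$ is positive semidefinite), and the sufficient-decrease inequality for DC iterates then becomes strict whenever $\sequence{\tu}{k+1} \neq \sequence{\tu}{k}$. Hence $f_\rho(\sequence{\tu}{k+1}) = f_\rho(\sequence{\tu}{k})$ forces $\sequence{\tu}{k+1} = \sequence{\tu}{k}$, and criticality then follows from item \ref{item:DC-algorithm:convergence:2}.

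The main obstacle is bookkeeping rather than substance: one needs to track the $\rho$-scaling between $\sequence{\ts}{k}$, which is a subgradient of $\largestKnorm{\sumOfSquaredDifferences{\cdot}}{K}[\nu]$ at $\sequence{\tu}{k}$, and the scaled vector $\rho \, \sequence{\ts}{k} \in \partial h(\sequence{\tu}{k})$ that actually appears in the DC iteration, and to confirm via \eqref{eq:subdifferential-of-the-largest-K-norm-of-the-sum-of-squared-differences:choice} that the algorithmically produced element genuinely lies in the relevant subdifferential. Once this identification is made, the theorem is a direct translation of the classical convergence result for DC~algorithms.
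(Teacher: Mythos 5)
Your identification of \cref{algorithm:DC-algorithm} as an instance of the classical DC~algorithm --- including the $\rho$-scaling bookkeeping and the verification via \eqref{eq:subdifferential-of-the-largest-K-norm-of-the-sum-of-squared-differences:choice} that $\rho \, \sequence{\ts}{k}$ is a genuine element of $\partial h(\sequence{\tu}{k})$ --- is sound and matches the paper's (largely implicit) setup. Your handling of \cref{item:DC-algorithm:convergence:1,item:DC-algorithm:convergence:2,item:DC-algorithm:convergence:3} is likewise consistent with the paper, which disposes of these by citing \cite[Theorem~3]{PhamLeThi:1997:1}; your strong-convexity remark for \cref{item:DC-algorithm:convergence:3} is essentially what that source provides.

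The gap is in \cref{item:DC-algorithm:convergence:4}, which you declare a direct consequence of the citation. It is not: the statement in \cite[Theorem~3]{PhamLeThi:1997:1} that accumulation points of the primal sequence are critical points is made under the hypothesis that \emph{both} sequences $\sequence[big](){\tu}{k}$ and $\sequence[big](){\ts}{k}$ are bounded, and this hypothesis must be verified for the concrete algorithm --- it is in fact the only part of the theorem the paper proves in any detail. The argument is: \cref{assumption:standing-assumptions:finite-dimensions:differentiability-and-convexity} together with $\phi(\tu) \ge 0$ for all $\tu$ makes $f_\rho$ bounded from below and radially unbounded, so the monotone decrease from \cref{item:DC-algorithm:convergence:1} forces $\sequence[big](){\tu}{k}$ to be bounded; boundedness of $\sequence[big](){\ts}{k}$ then follows from the explicit form \eqref{eq:subdifferential-of-the-largest-K-norm-of-the-sum-of-squared-differences:choice} of the chosen subgradient together with the bound $\norm{\sequence{\tr}{k}}_\infty \le K$ coming from \eqref{eq:choice_of_subgradient}. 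Note that boundedness of the dual sequence is not automatic for an arbitrary selection of subgradients; it relies on the specific choice made in the algorithm. Without this verification, \cref{item:DC-algorithm:convergence:4} does not follow from the cited result.
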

\begin{proof}
	\Cref{item:DC-algorithm:convergence:1} follows from \cite[Theorem~3$(i)$]{PhamLeThi:1997:1} and \cref{item:DC-algorithm:convergence:2,item:DC-algorithm:convergence:3} are a consequence of subsequent statements in the same source.
	This leaves \cref{item:DC-algorithm:convergence:4} to be discussed.
	By \cite[Theorem~3$(iv)$]{PhamLeThi:1997:1} it suffices to show boundedness of the sequences $\sequence[big](){\tu}{k}$ and $\sequence[big](){\ts}{k}$.
	Note that the properties of~$f$, see \cref{assumption:standing-assumptions:finite-dimensions:differentiability-and-convexity}, together with $\phi(\tu) \ge 0$ for all $\tu \in \R^\nvertices$, imply that the objective function $f_\rho$ is bounded from below and radially unbounded.
	Therefore, the sequence $\sequence[big](){\tu}{k}$ is bounded due to \cref{item:DC-algorithm:convergence:1}.
	Boundedness of $\sequence[big](){\tu}{k}$ and our choice of the subgradient $\sequence{\ts}{k}$, see \eqref{eq:subdifferential-of-the-largest-K-norm-of-the-sum-of-squared-differences:choice} and \eqref{eq:choice_of_subgradient}, then imply that $\sequence[big](){\ts}{k}$ is bounded as well since $\norm{\sequence{\tr}{k}}_\infty \le K$ for all $k$.
\end{proof}

The notion of (strong) criticality relates to first order necessary optimality conditions that can be stated in a nodal manner, similarly to the nodal characterization of B-stationarity in \cref{theorem:B-stationarity:explicit-description} and the following results in \cref{subsection:optimality-conditions}, but is typically weaker in the sense that strong criticality is not equivalent to the primal notion of B-stationarity.

\begin{theorem}[Necessary conditions for strong criticality] \skipline
	\label{theorem:discrete-optimality-conditions:penalized}
	Let $\tu$ be a strongly critical point of $f_\rho$ \wrt the natural DC decomposition in \eqref{eq:general-problem:discrete:penalized:restated-fully} and let $I \subseteq \set{1, \ldots, \nsimplices}$ be an index set that realizes the maximum in $\largestKnorm{\sumOfSquaredDifferences{\tu}}{K}[\nu]$ that we assume \wolog to satisfy $I \subseteq \nonconstantsimplices{\tu}$.
	Then the following are true:
	\begin{enumerate}
		\item \label[statement]{item:discrete-optimality-conditions:penalized:6}
			$\sum_{\ell \in V} \nabla f(\tu)_\ell = 0$ for all $V \subseteq \set{1, \ldots, \nvertices}$ such that
			\begin{math}
				\vertextosimplices{V}\setminus\coveredsimplices{V}
				\subseteq
				I
				.
			\end{math}

		\item \label[statement]{item:discrete-optimality-conditions:penalized:4}
			$\nabla f(\tu)_\ell = 0$ for all $\ell\in \set{1, \ldots, \nvertices}$ such that $\vertextosimplices{\ell} \subseteq I$.

		\item \label[statement]{item:discrete-optimality-conditions:penalized:5}
			$\nabla f(\tu)_\ell = 0$ for all $\ell\in \set{1, \ldots, \nvertices}$ such that $\vertextosimplices{\ell} \subseteq \constantsimplices{\tu}$.
	\end{enumerate}
\end{theorem}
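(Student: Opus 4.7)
The plan is to extract from strong criticality a closed-form expression for $\nabla f(\tu)$ and then evaluate the corresponding vertex sums $\sum_{\ell\in V}\nabla f(\tu)_\ell$ for the respective sets $V$ in each item. By \cref{lemma:general-problem:discrete:penalized:multiplier}, strong criticality at $\tu$ together with the realizer $I$ yields $\nabla f(\tu) = -\lambda = -2\rho(\tM - \tM_I)\tu$. Writing $I^c \coloneqq \set{1,\ldots,\nsimplices} \setminus I$ and using $\nu = \indicatorFunction{I}\nu + \indicatorFunction{I^c}\nu$ inside the definition of $\tM$ from \cref{lemma:norm-of-sum-of-squared-differences-via-matrices}, this simplifies to
\begin{equation*}
\nabla f(\tu) = -2\rho \, \tM_{I^c} \tu, \qquad \tM_{I^c} \coloneqq \tD_1^\transp \diag(\tD_2^\transp \indicatorFunction{I^c}\nu) \, \tD_1.
\end{equation*}
Thus each item reduces to showing $\sum_{\ell\in V}(\tM_{I^c}\tu)_\ell = 0$ for the prescribed set~$V$.

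Expanding this sum for an arbitrary $V\subseteq\set{1,\ldots,\nvertices}$ gives
\begin{equation*}
\sum_{\ell \in V} (\tM_{I^c} \tu)_\ell = \sum_{j=1}^{\nedges} (\tD_2^\transp \indicatorFunction{I^c}\nu)_j \, (\tD_1 \tu)_j \sum_{\ell \in V} (\tD_1)_{j,\ell}.
\end{equation*}
By the sign convention defining $\tD_1$, the inner column sum vanishes whenever both endpoints of edge~$j$ lie in $V$ or both lie outside $V$, and equals $\pm 1$ precisely when $j$ is a boundary edge of $V$ (exactly one endpoint in~$V$). Hence the full sum collapses to a sum over these boundary edges, and it suffices to show that each of them contributes zero.

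The three items then follow from short incidence arguments. For \cref{item:discrete-optimality-conditions:penalized:6}, any cell containing a boundary edge $j$ of $V$ has a vertex in $V$ and a vertex outside $V$, hence lies in $\vertextosimplices{V}\setminus\coveredsimplices{V}\subseteq I$; therefore $\edgetosimplices{j}\subseteq I$ and the factor $(\tD_2^\transp\indicatorFunction{I^c}\nu)_j$ vanishes. \Cref{item:discrete-optimality-conditions:penalized:4} is simply the specialization $V=\set{\ell}$ of \cref{item:discrete-optimality-conditions:penalized:6}. For \cref{item:discrete-optimality-conditions:penalized:5}, take $V=\set{\ell}$ with $\vertextosimplices{\ell}\subseteq\constantsimplices{\tu}$: every boundary edge $j$ sits in a cell of $\vertextosimplices{\ell}$ on which $u_h$ is constant, so the two endpoint values of $\tu$ on $j$ coincide and this time the complementary factor $(\tD_1\tu)_j$ vanishes.

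I do not anticipate a serious obstacle, as the argument is essentially bookkeeping via the incidence maps. The one subtle point worth flagging is that \cref{item:discrete-optimality-conditions:penalized:5} does \emph{not} follow from \cref{item:discrete-optimality-conditions:penalized:6}: combined with the \wolog\ assumption $I\subseteq\nonconstantsimplices{\tu}$, the hypothesis $\vertextosimplices{\ell}\subseteq\constantsimplices{\tu}$ forces $\vertextosimplices{\ell}\subseteq I^c$, which is the opposite inclusion to what \cref{item:discrete-optimality-conditions:penalized:6} would require. Consequently the factor $(\tD_2^\transp\indicatorFunction{I^c}\nu)_j$ cannot be used in that case and one must exploit the vanishing of $(\tD_1\tu)_j$ instead.
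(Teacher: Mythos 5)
Your proposal is correct and follows essentially the same route as the paper's proof: both extract $\nabla f(\tu) = -2\rho\,\tD_1^\transp\diag(\tD_2^\transp\chi_{\complement{I}}\nu)\,\tD_1\tu$ from strong criticality via \cref{lemma:general-problem:discrete:penalized:multiplier}, reduce the vertex sum to the edges with exactly one endpoint in~$V$, and kill those contributions through the $\tD_2^\transp\chi_{\complement{I}}\nu$ factor for \cref{item:discrete-optimality-conditions:penalized:6,item:discrete-optimality-conditions:penalized:4} and through the $(\tD_1\tu)_j$ factor for \cref{item:discrete-optimality-conditions:penalized:5}. Your closing remark correctly identifies why \cref{item:discrete-optimality-conditions:penalized:5} needs the separate argument, which matches the paper's treatment.
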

\begin{proof}
	Note that we can assume $I \subseteq \nonconstantsimplices{\tu}$ because $\sumOfSquaredDifferences{\tu}_i = 0$ holds for all $i \in \constantsimplices{\tu}$ by definition.
	Since $\tu$ is a strongly critical point, \cref{lemma:general-problem:discrete:penalized:multiplier} yields
	\begin{equation}
		\label{eq:vector-gradient-form}
		\begin{aligned}
			- \nabla f(\tu)
			=
			\lambda
			=
			2 \, \rho \, (\tM - \tM_I) \, \tu
			&
			=
			2 \, \rho \, \tD_1^\transp \diag \paren[big](){\tD_2^\transp \nu - \tD_2^\transp \indicatorFunction{I} \nu} \, \tD_1 \tu
			\\
			&
			=
			2 \, \rho \, \tD_1^\transp \diag \paren[big](){\tD_2^\transp \chi_{\complement{I}} \nu} \, \tD_1 \tu
			.
		\end{aligned}
	\end{equation}
	Written in a nodal (\ie, componentwise) sense, this is exactly the statement that
	\begin{equation}
		\label{eq:nodal-gradient-form}
		- \nabla f(\tu)_\ell
		=
		2 \, \rho \sum_{j=1}^\nedges (\tD_1)_{j,\ell} \paren[auto](){\tD_2^\transp \chi_{\complement{I}} \nu}_j (\tD_1 \tu)_j
		\quad
		\text{for all }
		\ell = 1, \ldots, \nvertices
		.
	\end{equation}
	\Cref{item:discrete-optimality-conditions:penalized:6}:
	Let $V \subseteq \set{1, \ldots, \nvertices}$ with $\vertextosimplices{V}\setminus\coveredsimplices{V} \subseteq I$ be given.
	We define
	\begin{equation*}
		J
		\coloneqq
		\setDef[auto]{j \in \set{1, \ldots, \nedges}}{\cardinality{V \cap \edgetovertices{j}} = 1}
	\end{equation*}
	and note that
	\begin{math}
		\edgetosimplices{J}
		=
		\vertextosimplices{V} \setminus \coveredsimplices{V}
		,
	\end{math}
	because these are exactly the simplices that have at least one but not all vertices in~$V$.

	Using the definitions of $\tD_1$ and $\tD_2$, we obtain that
	\begin{equation*}
		\sum_{\ell \in V} (\tD_1)_{j,\ell}
		=
		0
		\text{ for all }
		j \in \set{1, \ldots, \nedges} \setminus J
		\quad
		\text{and}
		\quad
		\sum_{i \in \complement{I}} (\tD_2)_{i,j} \nu_i
		=
		0
		\text{ for all }
		j \in J
		,
	\end{equation*}
	where the left-hand sum is either a sum of zeros or a sum of zeros and $\pm 1$, and where in the right-hand sum, the entries $(\tD_2)_{i,j}$ are all zero because the edges $j \in J$ are exactly the ones where the simplices these edges are incident to are in $\vertextosimplices{V}\setminus\coveredsimplices{V}$ and therefore in $I$.

	Using \eqref{eq:nodal-gradient-form}, we therefore have that
	\begin{align*}
		- \frac{1}{2 \, \rho}\sum_{l \in V} \nabla f(\tu)_\ell
		&
		=
		\sum_{l \in V} \sum_{j=1}^\nedges
		(\tD_1)_{j,\ell}
		\paren[big](){\tD_2^\transp \chi_{\complement{I}} \nu}_j
		(\tD_1 \tu)_j
		\\
		&
		=
		\sum_{j=1}^\nedges
		\paren[Big](){\sum_{l \in V} (\tD_1)_{j,\ell}}
		\paren[Big](){\sum_{i \in \complement{I}} (\tD_2)_{i,j} \nu_i}
		(\tD_1 \tu)_j
		=
		0
		.
	\end{align*}

	\Cref{item:discrete-optimality-conditions:penalized:4} is an immediate consequence of \cref{item:discrete-optimality-conditions:penalized:6}.

	\Cref{item:discrete-optimality-conditions:penalized:5}:
	Let $\ell \in \set{1, \ldots, \nvertices}$ with $\vertextosimplices{\ell} \subseteq \constantsimplices{\tu}$.
	Then, by definition, $(\tD_1 \tu)_j = 0$ for all $j \in \vertextoedges{\ell}$.
	Because $(\tD_1)_{j,\ell} = 0$ for every $j \in \set{1, \ldots, \nedges} \setminus \vertextoedges{\ell}$, this implies $(\tD_1)_{j,\ell} (\tD_1 \tu)_j  = 0$ for every $j \in \set{1, \ldots, \nedges}$.
	Again, \eqref{eq:nodal-gradient-form} implies $\nabla f(\tu)_\ell = 0$.
\end{proof}

Note that since the above result states optimality conditions that were generated using penalization techniques, there is no direct connection to feasibility of the points in question coming into play.
This becomes especially apparent when comparing \cref{theorem:discrete-optimality-conditions:penalized}~\ref{item:discrete-optimality-conditions:penalized:6} to the nodal characterization of B-stationarity for the discretized non-penalized problem, see \cref{theorem:B-stationarity:explicit-description}, which has similar vanishing average gradient structure and involves the same set of non-covered cells but where the information is given for differently structured subsets of vertices.
Additionally, we have the following result.

\begin{corollary}
	\label{corollary:discrete-optimality-conditions:penalized:exact-penalization}
	Let $\tu$ be a strongly critical point of $f_\rho$ \wrt the natural DC decomposition in \eqref{eq:general-problem:discrete:penalized:restated-fully} such that $\norm{\sumOfSquaredDifferences{\tu}}_{0,\nu} \le K$.
	Then $\nabla f(\tu) = 0$.
\end{corollary}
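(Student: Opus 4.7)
The plan is to apply \cref{theorem:discrete-optimality-conditions:penalized} with a carefully chosen maximizing index set~$I$, and then exploit the resulting structure in the formula \eqref{eq:vector-gradient-form} to show that every component of $\nabla f(\tu)$ vanishes.

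The first step will be to identify a suitable $I$ that realizes the maximum in $\largestKnorm{\sumOfSquaredDifferences{\tu}}{K}[\nu]$. The key observation is that, under the hypothesis $\norm{\sumOfSquaredDifferences{\tu}}_{0,\nu}\le K$, the choice $I \coloneqq \nonconstantsimplices{\tu}$ is both feasible (since $\sum_{i\in\nonconstantsimplices{\tu}} \nu_i = \norm{\sumOfSquaredDifferences{\tu}}_{0,\nu}\le K$) and optimal, because any larger weighted sum would only add indices $i$ with $\sumOfSquaredDifferences{\tu}_i = 0$. With this choice, $\complement{I} = \constantsimplices{\tu}$, and the theorem applies since clearly $I \subseteq \nonconstantsimplices{\tu}$.

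Next I would invoke formula \eqref{eq:vector-gradient-form}, which in this situation reads
\begin{equation*}
	-\nabla f(\tu)
	=
	2\,\rho\,\tD_1^\transp \diag\paren[big](){\tD_2^\transp \chi_{\constantsimplices{\tu}}\,\nu}\,\tD_1\tu.
\end{equation*}
To conclude, I will show that the vector $\diag\paren[big](){\tD_2^\transp \chi_{\constantsimplices{\tu}}\,\nu}\,\tD_1\tu \in \R^\nedges$ vanishes entry by entry. Fix an edge index~$j$. If no simplex $i\in\constantsimplices{\tu}$ is incident to~$j$, then $(\tD_2^\transp \chi_{\constantsimplices{\tu}}\,\nu)_j = 0$ by definition of~$\tD_2$, and the $j$-th entry is zero. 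Otherwise, there exists some $i_0\in \constantsimplices{\tu}$ with $j\in\simplextoedges{i_0}$; since $u_h$ is constant on~$S_{i_0}$, the two vertex values incident to edge~$j$ coincide, giving $(\tD_1\tu)_j=0$. In either case the $j$-th entry of the product is zero.

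The argument is essentially a direct combination of the choice of~$I$ with the indicator pattern encoded by $\tD_2^\transp$; the only part requiring care is verifying that $I=\nonconstantsimplices{\tu}$ indeed realizes the maximum, which follows transparently from the constraint~$\norm{\sumOfSquaredDifferences{\tu}}_{0,\nu}\le K$. No further obstacles are anticipated, and the identity $\nabla f(\tu) = 0$ follows immediately.
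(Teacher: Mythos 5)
Your proof is correct, and it rests on the same key ingredients as the paper's: strong criticality giving $-\nabla f(\tu) = 2\,\rho\,(\tM - \tM_I)\,\tu$ via \cref{lemma:general-problem:discrete:penalized:multiplier} and \eqref{eq:vector-gradient-form}, followed by showing $(\tM - \tM_I)\,\tu = 0$. The difference lies in how that last vanishing is established. The paper keeps $I$ arbitrary among the maximizing index sets and argues algebraically: feasibility means the penalty term vanishes, \ie, $\tu^\transp(\tM - \tM_I)\,\tu = \norm{\sumOfSquaredDifferences{\tu}}_{1,\nu} - \largestKnorm{\sumOfSquaredDifferences{\tu}}{K}[\nu] = 0$, and since this quadratic form is a sum of the nonnegative terms $(\tD_2^\transp \chi_{\complement{I}}\nu)_j\,(\tD_1\tu)_j^2$, each term and hence $(\tM-\tM_I)\tu$ must vanish. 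You instead fix the particular maximizer $I = \nonconstantsimplices{\tu}$ --- correctly justified by $\sum_{i\in\nonconstantsimplices{\tu}}\nu_i \le K$ and the nonnegativity of the summands --- so that $\complement{I} = \constantsimplices{\tu}$, and then check edge by edge that either no constant cell touches edge~$j$ (so the weight is zero) or some constant cell does (so $(\tD_1\tu)_j = 0$). Your route is more combinatorial and makes the mechanism visible at the level of individual edges and cells; the paper's is shorter and does not require singling out a specific maximizing set. Both are valid, and the edge-level case analysis in your version is sound because $\sumOfSquaredDifferences{\tu}_{i_0} = 0$ forces every squared edge difference on $S_{i_0}$ to vanish.
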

\begin{proof}
	If $\norm{\sumOfSquaredDifferences{\tu}}_{0,\nu} \le K$, then $\tu^\transp (\tM - \tM_I) \, \tu = \norm{\sumOfSquaredDifferences{\tu}}_{1, \nu} - \largestKnorm{\sumOfSquaredDifferences{\tu}}{K}[\nu] = 0$ and analogously to \eqref{eq:vector-gradient-form} and \eqref{eq:nodal-gradient-form} we therefore have that
	\begin{equation}
		0
		=
		\tu^\transp (\tM - \tM_I) \, \tu
		=
		\sum_{j=1}^\nedges \paren[auto](){\tD_2^\transp \chi_{\complement{I}} \nu}_j (\tD_1 \tu)_j^2
		,
	\end{equation}
	\ie, $\paren[auto](){\tD_2^\transp \chi_{\complement{I}} \nu}_j (\tD_1 \tu)_j = 0$ for all $j = 1, \ldots, \nedges$.
	Then $(\tM - \tM_I) \, \tu = 0$ and \eqref{eq:vector-gradient-form} yields the claim.
\end{proof}

\clearpage

\Cref{corollary:discrete-optimality-conditions:penalized:exact-penalization} implies that exact penalization is generally impossible.
That is, even for large values of the penalty parameter $\rho > 0$, a solution of the penalized problem \eqref{eq:general-problem:discrete:penalized:restated-fully} cannot be a solution of the original problem \eqref{eq:general-problem:discrete}.
The only exception occurs in case the cardinality constraint is inactive at the solution and therefore, penalization is unnecessary.
Likewise, strongly critical points of \eqref{eq:general-problem:discrete:penalized:restated-fully} are, in general, not $B$-stationary points of \eqref{eq:general-problem:discrete}.

On the other hand, we remind the reader of \cref{theorem:general-problem:discrete:penalized:convergence} which states that for large values of $\rho > 0$, the minimizer of \eqref{eq:general-problem:discrete:penalized:restated-fully} is close to the minimizer of the original problem \eqref{eq:general-problem:discrete}.

\section{Numerical Results}
\label{section:numerical-results}

The discrete problem analyzed in this section is motivated by the problem
\begin{equation}
	\label{example:general-problem}
	\begin{aligned}
		\text{Minimize}
		\quad
		&
		\frac{1}{2} \norm{\nabla u}_{L^2(\Omega)}^2
		-
		\int_\Omega g \, u \d \mu
		+
		\norm{u}_{H^{1+\varepsilon}(\Omega)}
		\\
		\text{\st}
		\quad
		&
		\norm{\nabla u}_0 \le K
		\quad
		\text{where }
		u \in U \coloneqq H_0^1(\Omega) \cap H^{1+\varepsilon}(\Omega)
		.
	\end{aligned}
\end{equation}
We suppose $\varepsilon > 0$ and set $\Omega \coloneqq \interval(){0}{1}^2$ and
\begin{math}
	g(x,y)
	\coloneqq
	10 \, x \, \sin(5 \, x) \, \sin(7 \, y)
	.
\end{math}
The chosen piecewise affine and continuous discretization scheme \eqref{eq:finite-element-space} is nonconforming for the space~$U$ in this example.
Nevertheless, the discretized problem of \eqref{example:general-problem} is of the general form \eqref{eq:general-problem:discrete}, satisfies \cref{assumption:standing-assumptions:finite-dimensions:differentiability-and-convexity} and is therefore globally solvable; see \cref{theorem:general-problem:discrete:existence}.
Numerically, we incorporate the boundary conditions directly into the discretization and let $\ninnervertices$ and $\ninnersimplices$ denote the number of interior vertices and the number of simplices with at least one interior vertex, respectively.
Rows and columns associated with quantities on the boundary are removed from $\tD_1$ and $\tD_2$.
The associated discrete and penalized problem is then of the general form \eqref{eq:general-problem:discrete:penalized:restated-fully} and reads
\begin{equation*}
	\text{Minimize}
	\quad
	\frac{1}{2} \tu^\transp \tA \tu - \tb^\transp \tu
	+
	\rho \, \paren[big](){\norm{\sumOfSquaredDifferences{\tu}}_{1,\nu} - \largestKnorm{\sumOfSquaredDifferences{\tu}}{K}[\nu]}
	\quad
	\text{where }
	\tu \in \R^\ninnervertices
	.
\end{equation*}
Here $\tA$ and $\tb$ are the stiffness matrix and load vector associated with the finite element basis $\family{\varphi_i}_{i=1}^\ninnervertices$, \ie,
\begin{alignat*}{2}
	\tA
	&
	\in \R^{\ninnervertices \times \ninnervertices}
	&
	&
	\quad
	\text{with}
	\quad
	\tA_{i,j}
	\coloneqq
	\int_\Omega \nabla \varphi_i \cdot \nabla \varphi_j \d \mu
	\\
	\text{and}
	\quad
	\tb
	&
	\in \R^\ninnervertices
	&
	&
	\quad
	\text{with}
	\quad
	\tb_\ell
	\coloneqq
	\int_\Omega g \, \varphi_\ell \d \mu
	.
\end{alignat*}

We have implemented \cref{algorithm:DC-algorithm} in \python.
For mesh generation with a target mesh size $h$, we used \gmsh \cite{GeuzaineRemacle:2009:1}.
Finite element assembly was achieved using the \scikitfem library \cite{GustafssonMcBain:2020:1}.
The mesh sizes, \ie, maximal edge lengths $\max \setDef[big]{\diam(S_i)}{i \in \set{1, \ldots, \ninnersimplices}}$, as well as the number of unknowns are reported in \cref{table:problem-dimensions}.

\begin{table}[htp]
	\centering
	\caption{Reciprocal of target mesh size, actual mesh size (maximal edge length) and number of unknowns.}
	\label{table:problem-dimensions}
	\begin{tabular}{rrr@{\qquad\qquad}rrr}
		\toprule
		$1/h$
		&
		mesh size
		&
		unknowns
		&
		$1/h$
		&
		mesh size
		&
		unknowns
		\\
		\midrule
		\num{8}
		&
		$1.5 \cdot 10^{-1}$
		&
		\num{66}
		&
		\num{256}
		&
		$5.1 \cdot 10^{-3}$
		&
		\num{75355}
		\\
		\num{16}
		&
		$8.3 \cdot 10^{-2}$
		&
		\num{276}
		&
		\num{512}
		&
		$2.6 \cdot 10^{-3}$
		&
		\num{302227}
		\\
		\num{32}
		&
		$4.0 \cdot 10^{-2}$
		&
		\num{1137}
		&
		\num{1024}
		&
		$1.3 \cdot 10^{-3}$
		&
		\num{1209308}
		\\
		\num{64}
		&
		$1.9 \cdot 10^{-2}$
		&
		\num{4631}
		&
		\num{2048}
		&
		$6.5 \cdot 10^{-4}$
		&
		\num{4840241}
		\\
		\num{128}
		&
		$1.0 \cdot 10^{-2}$
		&
		\num{18730}
		\\
		\bottomrule
	\end{tabular}
\end{table}

Unless mentioned otherwise, we initialize the \namedref{algorithm:DC-algorithm}{DC Algorithm} with $\sequence{\tu}{0} = \tA^{-1} \tb$, \ie, the solution of the unconstrained problem.
We stop the algorithm as soon as the decrease in function value and the feasibility penalty term reach the desired tolerance, \ie,
\begin{equation}
	\label{eq:stopping-criterion}
	\begin{aligned}
		\abs[big]{f(\sequence{\tu}{k}) - f(\sequence{\tu}{k+1})}
		&
		<
		10^{-15}
		\\
		\text{and}
		\quad
		\psi(\sequence{\tu}{k+1})
		\coloneqq
		\norm[big]{\tD_2 \abs{\tD_1 \sequence{\tu}{k+1}}}_{1, \nu}
		-
		\largestKnorm[big]{\tD_2 \abs{\tD_1 \sequence{\tu}{k+1}}}{K}[\nu]
		&
		<
		10^{-15}
		.
	\end{aligned}
\end{equation}

\begin{remark}
	Note that the criterion based on $\psi$ is less prone to numerical cancellation than a criterion based on the penalty term
	\begin{equation*}
		\phi(\tu)
		=
		\norm{\sumOfSquaredDifferences{\tu}}_{1,\nu}
		-
		\largestKnorm{\sumOfSquaredDifferences{\tu}}{K}[\nu]
	\end{equation*}
	because $\phi(\tu)$ consists of a sum of \emph{squared} differences tending to zero.
	The criterion based on $\psi$ equivalently determines feasibility since $\psi(\tu) = 0 \Leftrightarrow \phi(\tu) = 0$; see also \cref{remark:not-squaring-the-differences-of-vertex-values}.
\end{remark}

We employed \scipy's \texttt{spsolve} function to solve linear systems of equations, primarily those in \cref{algorithm:DC-algorithm:solve-subproblem} of \cref{algorithm:DC-algorithm}.
Moreover, if not mentioned otherwise, we used
\begin{equation}
	\label{eq:standard-parameters}
	K = 0.1
	,
	\quad
	\rho = 10^{18}
	,
	\quad
	h = \frac{1}{256}
	,
\end{equation}
where the magnitude of $K$ was chosen such that localizing sparsity effects could be observed visually, \cf
\cref{subsection:numerical-results:level-of-K} and the magnitude of the penalty parameter $\rho$ was chosen such that feasibility of the final iterate was reached up to machine precision, \cf \cref{subsection:numerical-results:penalty-parameter}.

To determine $\largestKnorm{\sumOfSquaredDifferences{\sequence{\tu}{k}}}{K}[\nu]$ as well as a subgradient in \cref{algorithm:DC-algorithm:determine-subgradient} of \cref{algorithm:DC-algorithm}, a suitable index set for the weighted largest-$K$-norm is determined by a greedy algorithm for knapsack problems \cite[Section~2.4]{MartelloToth:1990:1}.

Because any numerical realization of the DC~algorithm will likely not produce results where the entries of $\sumOfSquaredDifferences{\tu}$ are exactly $0$ outside of what one would consider the support of the gradient of the solution, interpreting the support of the final iterates requires thresholding.
Specifically, we interpret every entry of $\sumOfSquaredDifferences{\tu}$ that is smaller than the threshold of $10^{-10}$ as~$0$, thus not contributing to the $L^0$-pseudo-norm of a computed solution's gradient.

In the following, we report on the influence of different parameters on the behavior of \cref{algorithm:DC-algorithm} with the subgradient choice \eqref{eq:choice_of_subgradient}.
In particular, we compare the computed results with respect to the functional value $f(\tu^*)$ at the final iterate $\tu^*$, the $\ell_0$-pseudo-norm
\begin{equation*}
	\norm{\nabla u^*_h}_0
	=
	\norm{\sumOfSquaredDifferences{\tu^*}}_{0,\nu}
	,
\end{equation*}
numerically realized as $	\sum_{i=1}^\ninnersimplices \nu_i \, \norm[big]{\max \set{\sumOfSquaredDifferences{\tu^*}_{i} - 10^{-10}, \; 0}}_0$,
the feasibility measured by $\psi(\tu^*)$ and $\phi(\tu^*)$
and the total number of DC iterations (DC).

Moreover, since the DC~algorithm converges to critical points, we plot $-\tB^{-1} \nabla f(\tu^*)$, \ie, the $L^2$-Riesz representative of the continuous quantity corresponding to the $\lambda$ associated with~$\tu^*$; \cf \eqref{eq:general-problem:discrete:penalized:multiplier}.
Here, $\tB$ is the mass matrix associated with the finite element basis $\family{\varphi_i}_{i=1}^\ninnervertices$.

\subsection{Dependence on the Initial Guess}
\label{subsection:K-scheduling}
Though \cref{algorithm:DC-algorithm} converges to a critical point from any initial guess $\tu_0$, in practice we observe a variance in objective function values of the resulting critical points when the initial guess is varied.
Specifically, we can improve the objective functional value of the critical point by employing a scheduling for the upper bound~$K$ on the $L^0$-pseudo-norm of the gradient, in our case using the update rule
\begin{equation}
	\label{eq:K-schedule}
	\begin{aligned}
		\sequence{K}{0}
		&
		\coloneqq
		\max \set{\beta \, \mu(\Omega), K}
		=
		\max \set{\beta, K}
		,
		\\
		\sequence{K}{k}
		&
		\coloneqq
		\max \set{\beta \sequence{K}{k-1}, K}
		\quad
		\text{for all }
		k \ge 1
		,
	\end{aligned}
\end{equation}
where $\beta$ is the scheduling parameter, \cf~\cite{DittrichWachsmuth:2025:1}.
\Ie, instead of computing the data $\sequence{\tr}{k}$ in \cref{algorithm:DC-algorithm:determine-subgradient} according to the target level $K$ from the beginning, we compute the data for a decreasing sequence $\sequence{K}{k}$ matching the target level $K$ after a finite number of iterations.
\Cref{figure:solution:512} illustrates the computed solution with and without scheduling for $1/h = 512$.
We clearly observe the desired sparsity of the gradient.

In the setting without scheduling, we obtain $f(\tu^*) = -4.58 \cdot 10^{-3}$, $K - \norm{\nabla u_h^*}_0 = 5.46 \cdot 10^{-4}$, $\psi(\tu^*) = 9.95 \cdot 10^{-16}$ and $\phi(\tu^*) = 1.19 \cdot 10^{-22}$ after $2561$ DC~iterations.
Compared to our results reported with a $K$-schedule, \cf \cref{figure:solution:512} and \cref{table:level-of-discretization}, the solution without scheduling is clearly worse and is reached after a much larger number of iterations.

\begin{figure}[htbp]
	\centering
	\includegraphics[width = 0.45\linewidth]{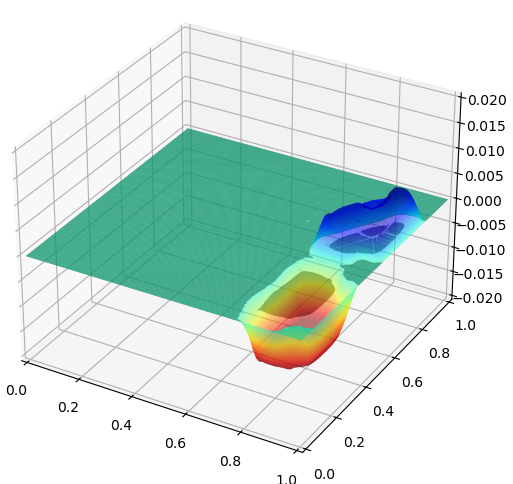}
	\hfill
	\includegraphics[width = 0.45\linewidth]{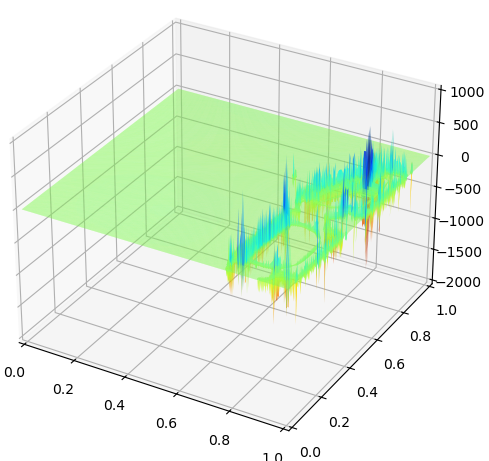}
	\\
	\includegraphics[width = 0.45\linewidth]{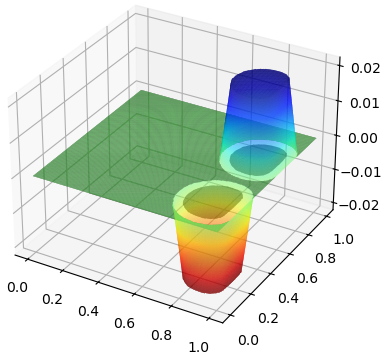}
	\hfill
	\includegraphics[width = 0.45\linewidth]{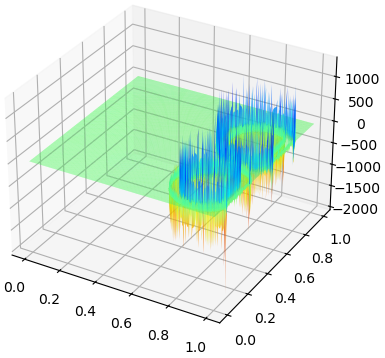}
	\caption{%
		Computed solution and $\lambda$, \cf \eqref{eq:general-problem:discrete:penalized:multiplier}, for $1/h = 512$, without using a $K$-schedule (top row), and with $K$-schedule (bottom row).
		Triangles where the solution is constant in the sense of our threshold are indicated by a darker shade.
	}
	\label{figure:solution:512}
\end{figure}

Therefore, the scheduling \eqref{eq:K-schedule} with $\beta = 0.95$ will be used in all upcoming results.

\subsection{Dependence on $K$}
\label{subsection:numerical-results:level-of-K}

\Cref{figure:solution:256:sparsity-parameter} visualizes solutions for different target values of~$K$.
Setting $K = 1$ is equivalent to having no constraint, which is clearly observable in the solution with the solution's gradient being supported on the whole domain.
As $K$ decreases, we see the expected contraction of the support of the solution's gradient.

\begin{figure}[htbp]
	\centering
	\includegraphics[width = 0.45\linewidth]{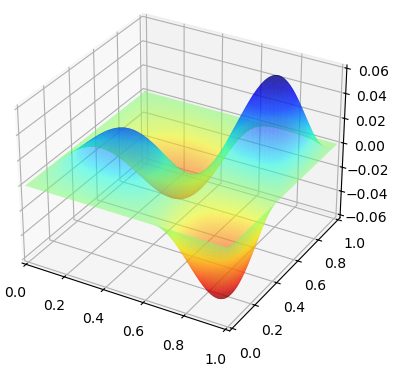}
	\hfill
	\includegraphics[width = 0.45\linewidth]{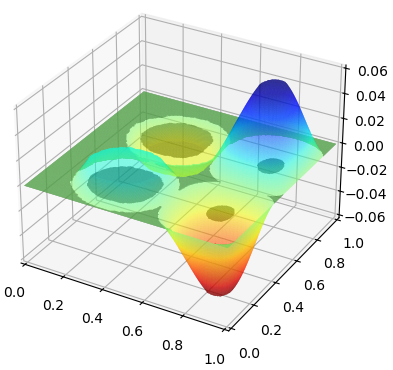}
	\\
	\includegraphics[width = 0.45\linewidth]{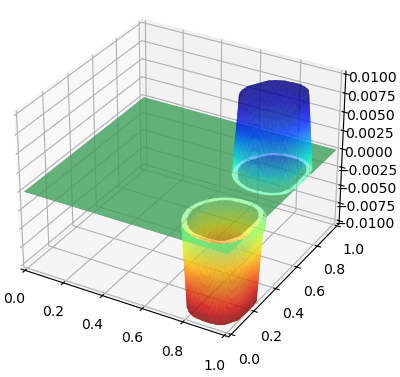}
	\hfill
	\includegraphics[width = 0.45\linewidth]{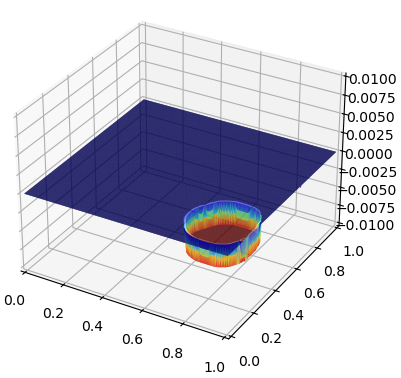}
	\caption{%
		Computed solutions for $K = 1, 0.5, 0.05, 0.01$ (top left to bottom right).
		Triangles where the solution is constant in the sense of our threshold are indicated by a darker shade.
		Note that the plot range in the top row is six times as large as in the bottom row.
	}
	\label{figure:solution:256:sparsity-parameter}
\end{figure}

\subsection{Dependence on the Discretization Level}
\label{subsection:numerical-results:level-of-discretization}

In \cref{table:level-of-discretization}, we report results for various target mesh sizes, while the other parameters remain unchanged.
We tend to observe convergence in objective value for $h \searrow 0$, while there is no clear connection between the number of DC iterations and the level of discretization.

\begin{table}[htp]
	\centering
	\caption{Results for different target mesh sizes~$h$: functional value, deviation of the cardinality constraint ($K = 0.1$), feasibility and number of DC iterations.}
	\label{table:level-of-discretization}
	\pgfplotstabletypeset[
	col sep = comma,
	header = false,
	skip rows between index = {0}{1},
	sci,
	sci zerofill,
	column type = l,
	columns = {0,1,3,4,5,6},
	every head row/.style = {
		before row = \toprule,
		after row = \midrule,
	},
	every last row/.style = {
		after row = \bottomrule,
	},
	columns/0/.style = {
		column name = {$1/h$},
		column type = r,
		string type,
	},
	columns/1/.style = {
		column name = {$f(\tu^*)$},
		dec sep align,
	},
	columns/2/.style = {
		column name = {$\norm{\nabla \tu^*}_0$},
		dec sep align,
	},
	columns/3/.style = {
		column name = {$K-\norm{\nabla u_h^*}_0$},
		dec sep align,
	},
	columns/4/.style = {
		column name = {$\psi(\tu^*)$},
		dec sep align,
	},
	columns/5/.style = {
		column name = {$\phi(\tu^*)$},
		dec sep align,
	},
	columns/6/.style = {
		column name = {DC},
		column type = r,
		string type,
	},
	]{discretization.csv}
\end{table}

\subsection{Dependence on the Penalty Parameter}
\label{subsection:numerical-results:penalty-parameter}

In this section, we elaborate on the choice of the rather large penalty parameter of $\rho = 10^{18}$ as a default parameter value.

\begin{table}[htp]
	\centering
	\caption{Results for different values of the penalty parameter $\rho$: functional value, deviation of the cardinality constraint ($K = 0.1$), feasibility and number of DC iterations.}
	\label{table:penalty-parameter}
	\pgfplotstabletypeset[
	col sep = comma,
	header = false,
	skip rows between index = {0}{1},
	skip rows between index = {2}{4},
	skip rows between index = {5}{7},
	skip rows between index = {8}{10},
	skip rows between index = {11}{13},
	skip rows between index = {14}{16},
	skip rows between index = {17}{19},
	skip rows between index = {20}{22},
	skip rows between index = {23}{25},
	sci,
	sci zerofill,
	column type = l,
	columns={0,1,3,4,5,6},
	every head row/.style = {
		before row = \toprule,
		after row = \midrule,
	},
	every last row/.style = {
		after row = \bottomrule,
	},
	columns/0/.style = {
		column type = l,
		column name = {$\rho$},
		string type,
		assign cell content/.code = {
			\pgfkeyssetvalue{/pgfplots/table/@cell content}{$##1$}
		},
	},
	columns/1/.style = {
		column name = {$f(\tu^*)$},
		dec sep align,
	},
	columns/2/.style = {
		column name = {$\norm{\nabla u_h^*}_0$},
		dec sep align,
	},
	columns/3/.style = {
		column name = {$K-\norm{\nabla u_h^*}_0$},
		dec sep align,
	},
	columns/4/.style = {
		column name = {$\psi(\tu^*)$},
		dec sep align,
	},
	columns/5/.style = {
		column name = {$\phi(\tu^*)$},
		dec sep align,
	},
	columns/6/.style = {
		column name = {DC},
		column type = r,
		string type,
	},
	]{penalty.csv}
\end{table}

\Cref{table:penalty-parameter} shows the results for various values of $\rho$.
We clearly observe that the feasibility improves with growing values of $\rho$.
Note that the increase in DC~iterations is caused by a different stopping criterion terminating the algorithm when
\begin{equation}
	\label{eq:stopping-criterion:alternative}
	\frac{\abs[big]{\psi(\sequence{\tu}{k+1}) - \psi(\sequence{\tu}{k})}}{\psi(\sequence{\tu}{k})}
	<
	10^{-3}
\end{equation}
and $\sequence{K}{k} = K$, \cf \eqref{eq:K-schedule}, which we employ to react to an observed stagnation of the feasibility measure $\psi(\sequence{\tu}{k})$; see \cref{figure:penalty-parameter}.
Alternatively, an upper bound on the number of DC iterations after the scheduling has driven $\sequence{K}{k}$ to $K$ would have achieved similar results.

\begin{figure}[htp]
	\centering
	\includegraphics[width = 0.8\linewidth]{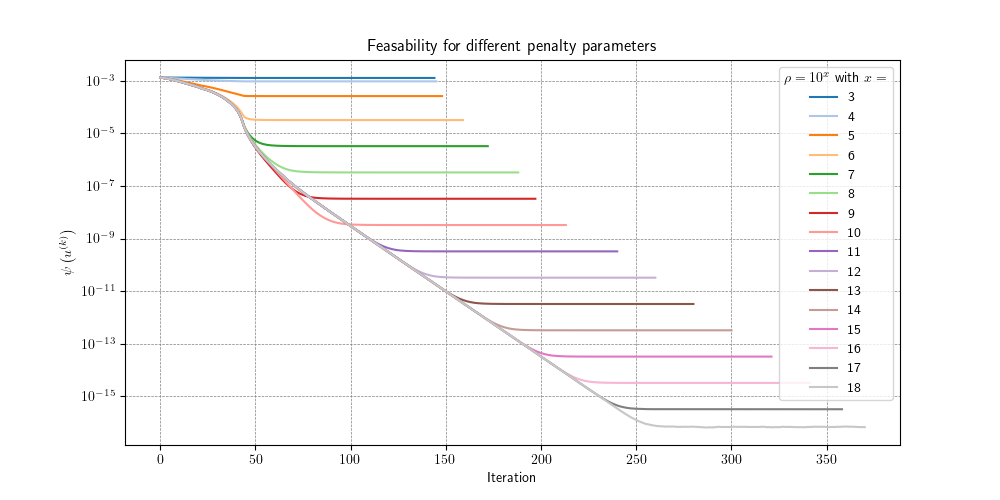}
	\caption{%
		Value of $\psi\sequence[big](){\tu}{k}$ over the iterations for different values of $\rho$.
		After the stopping criterion \eqref{eq:stopping-criterion:alternative} is satisfied, we let the algorithm execute another $100$~iterations.
	}
	\label{figure:penalty-parameter}
\end{figure}

An illustration of the results for smaller penalty parameters can be seen in \cref{figure:solution:256:penalty-parameter}.
Clearly, sparsity in the solution's gradient increases with increasing $\rho$, where $\rho = 10^7$ produces a solution that already captures the defining characteristics of the solution for $\rho = 10^{18}$, \cf \cref{figure:solution:512}.
Nevertheless, in order to achieve feasibility up to machine precision we choose $\rho = 10^{18}$.

\begin{figure}[htbp]
	\centering
	\includegraphics[width = 0.45\linewidth]{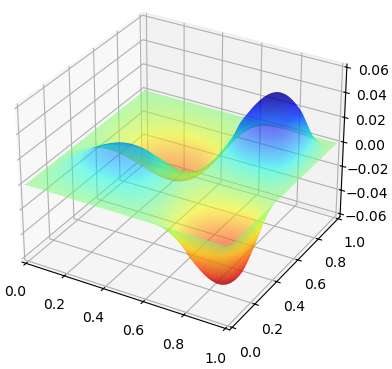}
	\hfill
	\includegraphics[width = 0.45\linewidth]{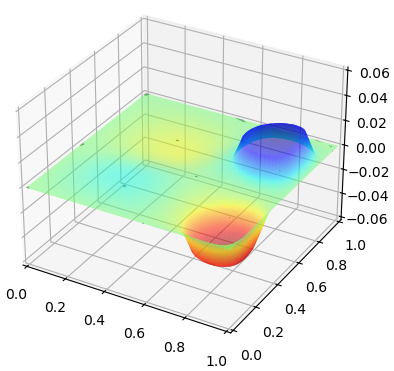}
	\\
	\includegraphics[width = 0.45\linewidth]{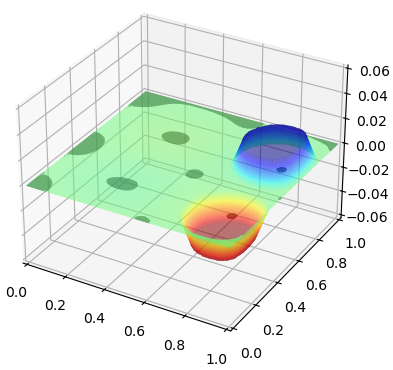}
	\hfill
	\includegraphics[width = 0.45\linewidth]{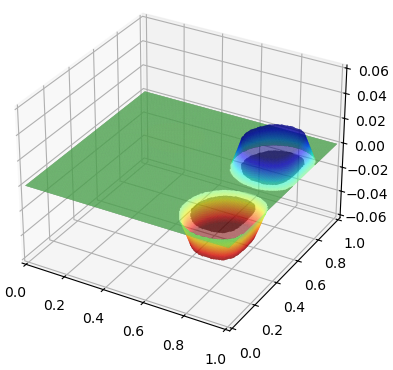}
	\caption{%
		Computed solutions for $\rho = 10^4, 10^5, 10^6, 10^7$ (top left to bottom right).
		Triangles where the solution is constant in the sense of our threshold are indicated by a darker shade.
	}
	\label{figure:solution:256:penalty-parameter}
\end{figure}

\section*{Data and Code Availability}

The code associated with the numerical results will be made available in a public repository upon acceptance of the paper.
No data was generated in this study.

\section*{Acknowledgments}

Parts of this paper were written while the third author was visiting the University of British Columbia, Vancouver.
He would like to thank the Department of Computer Science for their hospitality.

\appendix
\makeatletter
\ltx@ifclassloaded{siamart250106}{ }{}
\makeatother

\section{Notation}
\label{section:notation}

\begin{longtable}{p{0.20\linewidth}p{0.74\linewidth}}
	\toprule
	\endfirsthead
	\midrule
	\endhead
	\multicolumn{2}{l}{Definitions of various norms (\cref{section:notation-preliminaries})}
	\\
	\midrule
	$\norm{\,\cdot\,}_0$
	&
	$\ell_0$-pseudo-norm
	\\
	$\norm{\,\cdot\,}_{0,\nu}$
	&
	weighted $\ell_0$-pseudo-norm
	\\
	$\norm{\,\cdot\,}_1$
	&
	$\ell_1$-norm
	\\
	$\norm{\,\cdot\,}_{1,\nu}$
	&
	weighted $\ell_1$-norm
	\\
	$\largestKnorm{\cdot}{K}$
	&
	largest-$K$-norm
	\\
	$\largestKnorm{\cdot}{K}[\nu]$
	&
	weighted largest-$K$-norm
	\\
	\midrule
	\multicolumn{2}{l}{Assumptions on the domain and function spaces (\cref{assumption:standing-assumptions})}
	\\
	\midrule
	$d \in \N$
	&
	spatial dimension
	\\
	$\Omega \subseteq \R^d$
	&
	domain
	\\
	$(\Omega, \mathcal{A},\mu)$
	&
	Lebesgue measure space
	\\
	$U$
	&
	space for the unknown
	\\
	\midrule
	\multicolumn{2}{l}{Definition of the gradient-constrained optimization problem \eqref{eq:general-problem} (\cref{section:DC-reformulation:function-space})}
	\\
	\midrule
	$f$
	&
	objective function \eqref{eq:general-problem}
	\\
	$K \in \R_{\geq 0}$
	&
	upper bound for support size \eqref{eq:general-problem}
	\\
	$f_\rho$
	&
	penalized objective function
	\eqref{eq:general-problem:penalized}
	\\
	$\rho \in \R_{> 0}$
	&
	penalty parameter \eqref{eq:general-problem:penalized}
	\\
	$\phi$
	&
	feasibility penalty term \eqref{eq:general-problem:penalized}
	\\
	\midrule
	\multicolumn{2}{l}{Mesh-related quantities (\cref{subsection:discretization-constraint-reformulation-existence})}
	\\
	\midrule
	$\cS$
	&
	mesh, a collection of simplices of dimensions $0, 1, \ldots, d$
	\\
	$\nvertices \in \N$
	&
	number of vertices, indexed as $\ell = 1, \ldots, \nvertices$
	\\
	$\nedges \in \N$
	&
	number of edges, indexed as $j = 1, \ldots, \nedges$
	\\
	$\nsimplices \in \N$
	&
	number of $d-$simplices (cells), indexed as $i = 1, \ldots, \nsimplices$
	\\
	$S_i \in \cS$
	&
	$d$-simplex element (cell), $i = 1, \ldots, \nsimplices$
	\\
	\midrule
	\multicolumn{2}{l}{Quantities related to the discretization \eqref{eq:general-problem:discrete} of problem \eqref{eq:general-problem} (\cref{subsection:discretization-constraint-reformulation-existence})}
	\\
	\midrule
	$\edgetovertices{j}$
	&
	set of vertices $\ell$ incident to edge $j$
	\\
	$\simplextovertices{i}$
	&
	set of vertices $\ell$ that are vertices of simplex $i$
	\\
	$\vertextoedges{\ell}$
	&
	set of edges $j$ incident to vertex $\ell$
	\\
	$\simplextoedges{i}$
	&
	set of edges $j$ that are edges of simplex $i$
	\\
	$\vertextosimplices{\ell}$
	&
	set of simplices $i$ incident to vertex $\ell$
	\\
	$\edgetosimplices{j}$
	&
	set of simplices $i$ incident to edge $j$
	\\
	$U_h$
	&
	finite element space of piecewise affine, cont.\ functions
	\eqref{eq:finite-element-space}
	\\
	$\varphi_\ell$
	&
	nodal basis function, $\ell = 1, \ldots, \nvertices$
	\eqref{eq:finite-element-space}
	\\
	$\tu \in \R^\nvertices$
	&
	coefficient vector representing $u_h \in U_h$
	\\
	$\tD_1 \in \R^{\nedges \times \nvertices}$
	&
	matrix representing vertex to edge relation
	\\
	$\tD_2 \in \R^{\nsimplices \times \nedges}$
	&
	matrix representing edge to simplex relation
	\\
	$\tw(\tu) \in \R^{\nsimplices}$
	&
	$\tw(\tu)_i$ is the sum of squared pairwise differences of vertex values on cell $S_i$
	\eqref{eq:sum-of-squared-differences}
	\\
	$\constantsimplices{\tu}$
	&
	index set of cells where $\tu$ is constant
	\\
	$\nonconstantsimplices{\tu}$
	&
	index set of cells where $\tu$ is non-constant
	\\
	$\nu \in \R^{\nsimplices}_{>0}$
	&
	weight vector with $\nu_i = \mu(S_i)$ \eqref{eq:cell-volumes-are-weights}
	\\
	$F$
	&
	feasible set
	\eqref{eq:general-problem:discrete:feasible-set}
	\\
	\midrule
	\multicolumn{2}{l}{Quantities related to the optimality conditions of problem \eqref{eq:general-problem:discrete} (\cref{subsection:optimality-conditions})}
	\\
	\midrule
	$\tangentCone{F}{\tu}$
	&
	tangent cone to~$F$ at~$\tu$
	\\
	$\td \in \tangentCone{F}{\tu}$
	&
	tangent direction
	\\
	$V \subseteq \{1, \ldots, \nvertices\}$
	&
	subset of vertices
	(\cref{definition:subsets-of-vertices})
	\\
	$\coveredsimplices{V}$
	&
	set of all cells covered by $V$
	(\cref{definition:subsets-of-vertices})
	\\
	$\cV(\tu)$
	&
	collection of certain subsets of vertices
	(\cref{theorem:B-stationarity:explicit-description})
	\\
	$\te_\ell \in \R^\nvertices$
	&
	$\ell$-th standard basis vector
	\\
	\midrule
	\multicolumn{2}{l}{Quantities related to the discrete DC reformulation (\cref{subsection:DC-reformulation:DC-approach})}
	\\
	\midrule
	$\ts \in \R^\nvertices$
	&
	subgradient from the subdifferential $\partial \largestKnorm{\sumOfSquaredDifferences{\cdot}}{K}[\nu](\cdot)$
	\eqref{eq:general-problem:discrete:DC-subproblem}
	\\
	$I \subseteq \set{1, \ldots, \nsimplices}$
	&
	index set of cells
	(\cref{lemma:norm-of-sum-of-squared-differences-via-matrices})
	\\
	$\tM \in \R^{\nvertices \times \nvertices}$
	&
	symmetric positive semidefinite matrix
	(\cref{lemma:norm-of-sum-of-squared-differences-via-matrices})
	\\
	$\tM_I \in \R^{\nvertices \times \nvertices}$
	&
	symmetric positive semidefinite matrix
	(\cref{lemma:norm-of-sum-of-squared-differences-via-matrices})
	\\
	$\chi_I$
	&
	indicator function of the set~$I$ with values in $\set{0,1}$
	(\cref{lemma:norm-of-sum-of-squared-differences-via-matrices})
	\\
	$\tr \in \R^\nsimplices$
	&
	subgradient from the subdifferential $\partial \largestKnorm{\cdot}{K}[\nu](\sumOfSquaredDifferences{\cdot})$
	(\cref{theorem:subdifferential-of-the-largest-K-norm-of-the-sum-of-squared-differences})
	\\
	$\lambda \in \R^\nvertices$
	&
	negative gradient
	\eqref{eq:general-problem:discrete:penalized:multiplier}
	\\
	\midrule
	\multicolumn{2}{l}{Quantities related to the numerical experiments (\cref{section:numerical-results})}
	\\
	\midrule
	$\ninnervertices \in \N$
	&
	number of interior vertices, indexed as $\ell = 1, \ldots, \ninnervertices$
	\\
	$\ninnersimplices \in \N$
	&
	number of $d-$simplices (cells) with at least one interior vertex, indexed as $i = 1, \ldots, \ninnersimplices$
	\\
	$S_i \in \cS$
	&
	$d$-simplex element (cell), $i = 1, \ldots, \ninnersimplices$
	\\
	$\tA \in \R^{\ninnervertices \times \ninnervertices}$
	&
	stiffness matrix
	\\
	$\tB \in \R^{\ninnervertices \times \ninnervertices}$
	&
	mass matrix
	\\
	$\tb \in \R^{\ninnervertices}$
	&
	load vector
	\\
	$\diam(S_i)$
	&
	diameter of cell $S_i$
	\\
	$\psi$
	&
	alternative feasibility penalty term used as stopping criterion
	\\
	$\beta$
	&
	scheduling parameter for the $K$-schedule
	\eqref{eq:K-schedule}
	\\
	$h$
	&
	target mesh size
	\\
	\bottomrule
\end{longtable}

\printbibliography

\end{document}